\documentclass[11pt]{amsart}


\usepackage{amsmath}
\usepackage{amssymb}

\usepackage{amsfonts}
\usepackage{amsthm}
\usepackage{enumerate}
\usepackage[mathscr]{eucal}

\usepackage{tikz}

\usepackage{bbm}
\usepackage{subfigure}

\usepackage{graphicx}
\usepackage{verbatim}
\newcommand{\rhoop}{\text{\rm Op}}

\newcommand {\Span} {\operatorname{span}}
\newcommand{\vsig}{\varsigma}

\newcommand{\sB}{\mathscr{B}}

\newcommand{\Be}{\begin{equation}}
\newcommand{\Ee}{\end{equation}}

\newcommand{\Ba}[1]{\begin{array}{#1}}
\newcommand{\Ea}{\end{array}}
\newcommand{\Bea}{\begin{eqnarray}}
\newcommand{\Eea}{\end{eqnarray}}
\newcommand{\Beas}{\begin{eqnarray*}}
\newcommand{\Eeas}{\end{eqnarray*}}
\newcommand{\Benu}{\begin{enumerate}}
\newcommand{\Eenu}{\end{enumerate}}
\newcommand{\Bi}{\begin{itemize}}
\newcommand{\Ei}{\end{itemize}}

\def\intslash{\rlap{\kern  .32em $\mspace {.5mu}\backslash$ }\int}
\def\qsl{{\rlap{\kern  .32em $\mspace {.5mu}\backslash$ }\int_{Q_x}}}

\def\emph#1{{\it #1 }}

\def\Ga{\Gamma}
\def\ga{\gamma}

\def\cf{{\it cf}}

\def\dist{{\text{\it dist}}}

\def\supp{{\text{\rm supp}}}

\def\inn#1#2{\langle#1,#2\rangle}
\def\biginn#1#2{\big\langle#1,#2\big\rangle}

\def\noi{\noindent}

\def\meas{{\text{\rm meas}}}

\def\lc{\lesssim}
\def\gc{\gtrsim}


\def\eps{\varepsilon}
\def\ep{\epsilon}

\def\ka{\kappa}
            
             \def\La{\Lambda}

\def\vphi{\varphi}
             
              \def\Om{\Omega}

\def\fM{{\mathfrak {M}}}

\def\fP{{\mathfrak {P}}}
\def\fQ{{\mathfrak {Q}}}

\def\fa{{\mathfrak {a}}}


\def\fz{{\mathfrak {z}}}

\def\bbC{{\mathbb {C}}}

\def\bbE{{\mathbb {E}}}

\def\bbN{{\mathbb {N}}}

\def\bbR{{\mathbb {R}}}

\def\bbT{{\mathbb {T}}}

\def\bbZ{{\mathbb {Z}}}

\def\sD{{\mathscr {D}}}

\def\sH{{\mathscr {H}}}

\def\cA{{\mathcal {A}}}

\def\cD{{\mathcal {D}}}

\def\cH{{\mathcal {H}}}
\def\cI{{\mathcal {I}}}

\def\cM{{\mathcal {M}}}
\def\cN{{\mathcal {N}}}

\def\cS{{\mathcal {S}}}

\def\cU{{\mathcal {U}}}




 at 10 true pt

\def\be#1{\begin{equation}\label{ #1}}
\def\endeq{\end{equation}}
\def\endal{\end{align}}
\def\bas{\begin{align*}}
\def\eas{\end{align*}}
\def\bi{\begin{itemize}}
\def\ei{\end{itemize}}

\def\eps{\varepsilon}

\def\emph#1{{\it #1}}
\def\textbf#1{{\bf #1}}


\usepackage{bbm}
\def\bbone{{\mathbbm 1}}



\parskip = 2 pt

\theoremstyle{plain}
  \newtheorem{theorem}{Theorem}[section]

   \newtheorem{proposition}[theorem]{Proposition}
   
   \newtheorem{lemma}[theorem]{Lemma}
   \newtheorem{corollary}[theorem]{Corollary}

\theoremstyle{remark}
   \newtheorem{remark}[theorem]{Remark}
   
   \newtheorem*{Remark}{Remark}
   
\theoremstyle{definition}
   \newtheorem{definition}[theorem]{Definition}

\newcommand {\lBs} {{(B^s_{p,q})_{\ell^p}}}
\newcommand {\SE} {{\mathbb E}}
\newcommand {\SN} {{\mathbb N}}
\newcommand {\SR} {{\mathbb R}}

\newcommand {\SZ} {{\mathbb Z}}
\newcommand {\SNz} {{\mathbb \SN_0}}
\newcommand {\SRd} {{\mathbb R^d}}
\newcommand {\SZd} {{\mathbb Z^d}}

\newcommand {\e} {{\varepsilon}}

\newcommand {\hpsi} {{\widehat\psi}}
\newcommand {\hphi} {{\widehat\vphi}}

\newcommand {\tzeta} {{\tilde\zeta}}
\newcommand {\tchi} {{\tilde\chi}}

\newcommand {\dt} {{\delta}}
\newcommand{\Dt}{{\Delta}}
\newcommand{\ENp}{{\bbE_N^\perp}}

\newcommand {\Ds} {\displaystyle}

\newcommand {\mand} {{\quad\mbox{and}\quad}}
\renewcommand {\mid} {{\,\,\,\colon\,\,\,}}
\def\supp{\mathop{\rm supp}}
\def\dist{\mathop{\rm dist}}

\newcounter{reg}
\setcounter{reg}{0}

\newcommand{\sline}{{\smallskip

\noindent}}

\newcommand{\bline}{{\medskip

\noindent}}


\def\Xint#1{\mathchoice
{\XXint\displaystyle\textstyle{#1}}%
{\XXint\textstyle\scriptstyle{#1}}%
{\XXint\scriptstyle\scriptscriptstyle{#1}}%
{\XXint\scriptscriptstyle\scriptscriptstyle{#1}}%
\!\int}
\def\XXint#1#2#3{{\setbox0=\hbox{$#1{#2#3}{\int}$ }
\vcenter{\hbox{$#2#3$ }}\kern-.6\wd0}}

\def\mint{\Xint-}



\begin{document}

\title 
{Basis properties of 
the Haar system  in limiting Besov spaces}

\author[G. Garrig\'os \ \ \ A. Seeger \ \ \ T. Ullrich] {Gustavo Garrig\'os   \ \ \ \   Andreas Seeger \ \ \ \ Tino Ullrich}

\address{Gustavo Garrig\'os\\ Department of Mathematics\\University of Murcia\\30100 Espinardo\\Murcia, Spain} \email{gustavo.garrigos@um.es}

\address{Andreas Seeger \\ Department of Mathematics \\ University of Wisconsin \\480 Lincoln Drive\\ Madison, WI,53706, USA} \email{seeger@math.wisc.edu}
\address{Tino Ullrich\\Faculty of Mathematics, Technical University of Chemnitz\\ 09107 Chemnitz, Germany} \email{tino.ullrich@mathematik.tu-chemnitz.de}
\subjclass[2010]{46E35, 46B15, 42C40}
\keywords{Schauder basis, basic sequence, unconditional basis, local Schauder basis, dyadic averaging operators, Haar system, Besov and Triebel-Lizorkin spaces}

 %


\begin{abstract} 
We study Schauder basis properties for the Haar system in Besov spaces $B^s_{p,q}(\bbR^d)$. 
We give a complete description of the limiting cases, obtaining various positive results
for  $q\leq \min\{1,p\}$, and providing new counterexamples in other situations.  
The study is based on suitable estimates of the dyadic averaging operators $\SE_N$; 
in particular we find asymptotically optimal growth rates for the norms of these operators in global and local situations.  
\end{abstract}

 \maketitle

\section{Introduction}

The purpose of this paper is to  complete the study of the basis properties 
of the (inhomogeneous) Haar system in the scale of Besov spaces $B^s_{p,q}(\bbR^d)$. In view of previous results, only the endpoint cases are 
 of interest. This is a companion  to  the forthcoming paper \cite{gsu-endpt}, in which the authors consider the same endpoint questions for the Triebel-Lizorkin spaces. The outcomes for Besov and for  Triebel-Lizorkin spaces,  in both non-endpoint situations (\cite{triebel78, triebel-bases, su, sudet, gsu}) and endpoint situations, are markedly different.
 
To state the results, we first set some basic notation.
Consider the one variable functions
 $h^{(0)}=\bbone_{[0,1)}$ and $h^{(1)}=\bbone_{[0,1/2)}-\;\bbone_{[1/2,1)}$.
 For every $\ep=(\ep_1,\ldots,\ep_d)\in\{0,1\}^d$, for $k\in \bbN_0$ and $\mu=(\mu_1,\dots, \mu_d)\in \bbZ^d$ 
  one defines
\[
h^{\ep}_{k,\mu}(x)
= \prod_{i=1}^d h^{(\ep_i)}(2^kx_i-\mu_i),\quad x=(x_1,\ldots,x_d)\in\SR^d.
\]
Denoting $\Upsilon=\{0,1\}^d\setminus\{\vec 0\}$, the Haar system is  given as 
\[
\sH_d=\Big\{h^{\vec 0}_{0,\mu}\Big\}_{\mu \in\SZd}\cup \Big\{h^{\ep}_{k,\mu}\mid k\in\SNz,\;\mu\in\SZd,\;\ep\in\Upsilon\Big\}.
\]
We refer to $2^k$ as the Haar frequency of $h^{\ep}_{k,\mu}$.
We consider an enumeration   $\cU=\{u_n\}_{n=1}^\infty $ of the Haar system, and write  $u_n=h_{k(n),\mu(n)}^{\ep(n)}$ 
for the corresponding frequency and position parameters $k(n)$, $\mu(n)$.
 
Given $R\in \bbN$,  the partial sum operator $S_R\equiv S_R^\cU$ is defined as the projection onto $\text{span}\{u_1,\dots, u_R\}$, that is
\Be\label{SR} S_R^\cU f= \sum_{n=1}^R 
u_n^*(f) u_n \,,
\Ee
where  for $u_n= h_{k(n),\mu(n)}^{\ep(n)}$  the linear functional $u_n^*$
is defined by
\Be
u_n^* (f) =
2^{k(n)d} \langle f,h_{k(n),\mu(n)}^{\ep(n)}\rangle \,,
\label{unstar}
\Ee
at least when $f\in L^1_{loc}(\SR^d)$. Below we shall only consider Besov spaces so that
$u_n\in B^s_{p,q}$ and $u^*_n$ extends to an element of $(B^s_{p,q})^*$ for all $n\in \SN$,
so that \eqref{unstar} will actually have a meaning for all $f\in B^s_{p,q}$. 

We say that $\cU$ is a  {\it Schauder basis} of $B^s_{p,q}(\SR^d)$ if   
\Be\label{basicseqdef}\lim_{R\to\infty}\|S_R^\cU f-f\|_{B^s_{p,q}}=0
\Ee
holds for every $f\in B^s_{p,q}$. We say that $\cU$ is a {\it basic sequence} if \eqref{basicseqdef} holds for every $f$ in the $B^s_{p,q}$-closure of $ \Span\sH_d$. 
Finally, we say that $\sH_d $ is an \emph{unconditional basis} of $B^s_{p,q}$   if every  enumeration $\cU$ is a Schauder basis.

The above basis properties are related with the uniform bound \Be
\label{uniformSRbound}
 C_\cU:= \sup_{R\geq 1}\big\|S_R^\cU\big\|_{B^s_{p,q}\to B^s_{p,q}}<\infty.
\Ee 
Indeed, one has 
\[
\big\|S_R^{\cU}f-f\big\|_{B^s_{p,q}}\lesssim\,(C_\cU+1)\|f-h\|_{B^s_{p,q}}+\big\|S_R^{\cU}h-h\big\|_{B^s_{p,q}},\quad
h\in\Span\sH_d.
\]
Thus, \eqref{uniformSRbound} implies that $\cU$ is a basic sequence in $B^s_{p,q}$.
If $\text{span} (\sH_d )$ is dense in $B^s_{p,q}$, then $\cU$ is a Schauder basis if and only if 
\eqref{uniformSRbound} holds. 
If in addition  the bound in \eqref{uniformSRbound} does not depend on the enumeration $\cU$ then $\sH_d $ is an unconditional basis of $B^s_{p,q}$. By the uniform boundedness principle such a uniform estimate is also necessary for unconditionality. This is well-known for Banach spaces, and a proof for quasi-Banach spaces can be found in \cite{albiac}.

We consider the full range of indices $s\in\SR$ and $0<p,q\leq \infty$. When $p=\infty$ or $q=\infty$ the space $B^s_{p,q}$ is not separable, but in those cases one may consider the Schauder basis property in the $B^s_{p,q}$-closure of the Schwartz class $\cS$, which we will denote $b^s_{p,q}$ (as in \cite[Def 1.1.3]{RuSi96}).

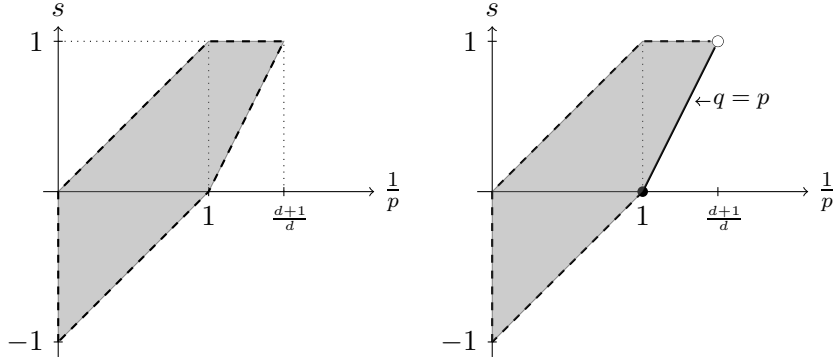
\begin{figure}[ht]
 \centering
\subfigure
{\begin{tikzpicture}[scale=2]
\draw[->] (-0.1,0.0) -- (2.1,0.0) node[right] {$\frac{1}{p}$};
\draw[->] (0.0,-0.0) -- (0.0,1.1) node[above] {$s$};
\draw (0.0,-1.1) -- (0.0,-1.0)  ;

\draw (1.0,0.03) -- (1.0,-0.03) node [below] {$1$};
\draw (1.5,0.03) -- (1.5,-0.03) node [below] {{\tiny $\;\;\frac{d+1}{d}$}};
\draw (0.03,1.0) -- (-0.03,1.00) node [left] {$1$};
\draw (0.03,-1.0) -- (-0.03,-1.00) node [left] {$-1$};

\draw[dotted] (1.0,0.0) -- (1.0,1.0);
\draw[dotted] (1.5,0.0) -- (1.5,1.0);
\draw[dotted] (0.0,1.0) -- (1.0,1.0);

\draw[fill=black!50, opacity=0.4]  (0.0,0.0) -- (1,1) -- (1.5,1)
-- (1.0,0.0) -- (0,-1) -- (0.0,0);

\draw[dashed, thick] (0.0,-1.0) -- (0.0,0.0) -- (1.0,1.0) -- (1.5,1.0) -- (1.0,0.0) --
(0.0,-1.0);
\end{tikzpicture}
}
\subfigure
{\begin{tikzpicture}[scale=2]
\draw[->] (-0.1,0.0) -- (2.1,0.0) node[right] {$\frac{1}{p}$};
\draw[->] (0.0,0.0) -- (0.0,1.1) node[above] {$s$};
\draw (0.0,-1.1) -- (0.0,-1.0)  ;
\draw (1.0,0.03) -- (1.0,-0.03) node [below] {$1$};
\draw (1.5,0.03) -- (1.5,-0.03) node [below] {{\tiny $\;\;\frac{d+1}{d}$}};
\draw (0.03,1.0) -- (-0.03,1.00) node [left] {$1$};
\draw (0.03,-1.0) -- (-0.03,-1.00) node [left] {$-1$};

\draw[dotted] (1.0,0.0) -- (1.0,1.0);
\draw[dashed, thick] (0.0,-1.0) -- (0.0,0.0) -- (1.0,1.0) -- (1.45,1.0);
\draw[dashed, thick] (1.0,0.0) -- (0.0,-1.0);
\draw[thick] (1.48,0.96) -- (1.0,0.0);
\fill (1,0) circle (1pt);
\draw (1.5,1) circle (1pt);

\draw[<-, thin] (1.35,0.6)--(1.45,0.6);
\node [right] at (1.4,0.6) {{\footnotesize $q=p$}};

\draw[fill=black!50, opacity=0.4]  (0.0,0.0) -- (1,1) -- (1.5,1)
-- (1.0,0.0) -- (0,-1) -- (0.0,0);
\fill[white] (1.5,1) circle (1pt);

\end{tikzpicture}
}
\caption{Parameter domain $\fP$  for  $\sH_d$ in $B^s_{p,q}(\SRd)$. The left figure shows the region of unconditionality, and right figure the region for the Schauder basis property.}\label{fig1}
\end{figure}

\

The pentagon $\fP$ depicted in Figure \ref{fig1} shows the natural index region for these problems. More precisely,  Triebel showed in \cite{triebel78, triebel-bases} that, for all $q<\infty$, the Haar system $\sH_d$ is an unconditional basis of $B^s_{p,q}(\SR^d)$ for the $(1/p,s)$ parameters in the interior of the pentagon $\fP$; {\it i.e.} those satisfying one of the conditions (i), (ii) in Theorem \ref{uncond-besov}. He also showed 
 that for the parameters in the complement of the closure of $\fP$ the Haar system does not form a basis. 
Except for a few trivial cases, the behavior at the points $(1/p,s)$ lying in the boundary of $\fP$ seems to be unexplored; see however the separate work  \cite{oswald} and Remark \ref{R_oswald} below.

In this paper we attempt to fill these gaps, by giving an answer, positive or negative, depending on the secondary index $q$. Moreover, in some cases, the negative answer is replaced by slightly weaker properties, such as the local Schauder basis, or basic sequence properties.


We begin by stating complete results about unconditionality, which 
contain new negative cases compared to \cite{triebel78, triebel-bases}. We remark that the corresponding results in Triebel-Lizorkin spaces are much more restrictive, see the discussion in \cite[Remark 2.2.10]{triebel-bases} and the counterexamples in \cite{su}.

\begin{theorem} \label{uncond-besov}
Let $0<p,q\leq \infty$ and $s \in \bbR$. 
Then, $\sH_d $ is an unconditional basis of $B^s_{p,q}(\SR^d)$ if and only if
one of the following two conditions is satisfied.

\sline (i) $\quad 1\le p<\infty$, $\quad -1+\frac1p< s< \frac1p$, $\;\quad 0<q< \infty$.

\sline (ii)  $\;\frac{d}{d+1} < p< 1$, $\quad d(\frac 1p-1)<s<1$, $\quad 0<q<\infty$.

\end{theorem}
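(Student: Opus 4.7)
The plan is to handle the ``if'' and ``only if'' directions separately. For the ``if'' direction, I would appeal to Triebel's Haar characterization of $B^s_{p,q}$ (see \cite{triebel-bases}). In the open pentagon $\fP$ with $q < \infty$, this gives the norm equivalence
\[
\|f\|_{B^s_{p,q}} \;\asymp\; \Big\|\{\langle f, h^{\vec 0}_{0,\mu}\rangle\}_\mu\Big\|_{\ell^p} + \Big(\sum_{k \geq 0} 2^{k(s+d-d/p)q}\Big(\sum_{\epsilon \in \Upsilon,\, \mu}|\langle f, h^\epsilon_{k,\mu}\rangle|^p\Big)^{q/p}\Big)^{1/q}.
\]
Since the right-hand side is manifestly invariant under arbitrary sign changes of the coefficients, and $\Span \sH_d$ is dense in $B^s_{p,q}$ for $q < \infty$, the uniform bound \eqref{uniformSRbound} holds with a constant independent of the enumeration $\cU$, yielding unconditionality.

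For the ``only if'' direction, I would suppose neither (i) nor (ii) holds, and decompose the exceptional parameter set into three groups: (A) $q = \infty$; (B) $p \leq d/(d+1)$ or $p = \infty$; (C) $(1/p, s)$ lies on one of the four edges of $\fP$. For each I would exhibit $f \in \Span \sH_d$ with bounded $B^s_{p,q}$ norm together with a choice of signs $\sigma_n \in \{\pm 1\}$ such that $\|\sum_n \sigma_n u_n^*(f) u_n\|_{B^s_{p,q}}$ grows without bound. The natural test functions at the edges are lacunary towers
\[
g_N = \sum_{k=0}^N a_k \, h^\epsilon_{k, \mu(k)}
\]
with $a_k$ and positions $\mu(k)$ chosen to sit exactly at the critical scaling of the offending edge. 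Paired with signs that exploit the nested cube geometry, these produce the required blow-up.

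The main obstacle is the sharp quantitative analysis on the four edges of $\fP$, where Triebel's Haar characterization just fails. Ruling out unconditionality there demands sharp lower bounds for $\|S_R^\cU\|_{B^s_{p,q}\to B^s_{p,q}}$ for adversarial enumerations $\cU$, which reduce to asymptotically sharp norm estimates for the dyadic averaging operators $\SE_N$ on $B^s_{p,q}$ — the main technical engine the paper develops. The case $q=\infty$ is qualitatively different: the obstruction is not a quantitative blow-up, but the elementary fact that individual Haar functions need not lie in the Schwartz closure $b^s_{p,\infty}$, so $\sH_d$ cannot form a Schauder basis of $b^s_{p,\infty}$ at all and a fortiori is not unconditional.
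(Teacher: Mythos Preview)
Your ``if'' direction via Triebel's Haar characterization matches the paper's treatment (which simply cites \cite{triebel78,triebel-bases}).

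The ``only if'' direction has a real gap on the edges $s=d(\tfrac1p-1)$, $\tfrac{d}{d+1}<p\le 1$, and $s=\tfrac1p-1$, $1<p<\infty$. You assert that failure of unconditionality there ``reduces to asymptotically sharp norm estimates for the dyadic averaging operators $\SE_N$'', but this cannot work: at these very edges, for $q\le p$ (resp.\ $q\le 1$), Theorem~\ref{basic-sequence-besov} gives $\sup_N\|\SE_N\|<\infty$, and by Theorem~\ref{schauder-besov} every strongly admissible enumeration \emph{is} a Schauder basis there. The $\SE_N$ machinery controls precisely the admissible partial sums (via Lemma~\ref{basicdec}) and is therefore blind to the failure of unconditionality at these points. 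What is needed instead is a Haar projection $P_A$ onto a deliberately \emph{non}-admissible finite set $A$ with $\|P_A\|\to\infty$. The paper (\S\ref{uncondfailurep<1}--\S\ref{uncondfailurep>1}) takes $A(N)=\{\bbone_{[0,1)^d}\}\cup\{h^\ep_{k,0}:0\le k<N,\ \ep\in\Upsilon\}$ and tests $P_{A(N)}$ on the odd extension $G_N(x)=F_N(x)-F_N(-x)$ of $F_N=2^{Nd}\bbone_{[0,2^{-N})^d}$. The telescoping identity $F_N=\bbone_{[0,1)^d}+\sum_{k<N}2^{kd}\sum_\ep h^\ep_{k,0}$ forces $P_{A(N)}G_N=F_N$; the cancellation across the origin gives $\|G_N\|_{B^{d/p-d}_{p,q}}\lc 1$, while $\|F_N\|_{B^{d/p-d}_{p,q}}\gc N^{1/q}$. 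Your ``towers with nested cube geometry'' is in the right spirit, but the odd-extension trick producing the \emph{bounded} test function is the missing ingredient, and it has nothing to do with $\SE_N$.

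A minor point: for $q=\infty$ the obstruction is simply that $B^s_{p,\infty}$ is not separable, hence has no countable Schauder basis. Your remark about whether Haar functions lie in $b^s_{p,\infty}$ addresses a different question.
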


 The region (i)-(ii) is shown in the left of Figure \ref{fig1}.
In the next results we shall be concerned with the endpoint behavior when we 
 drop unconditionality.  
To do so we must single out 
 specific enumerations of the Haar system, labeled `admissible', or `strongly admissible'.

 
 \smallskip
 
\begin{definition} \label{strongly-adm}
  {\em (i)} An enumeration $\cU$  is  said to be {\it admissible} if there is a constant $b\in \bbN$
with the following property: for each cube $I_{\nu} = \nu+[0,1]^d$, $\nu\in \bbZ^d$, if $u_n$ and $u_{n'}$ are both supported in $I_\nu$ and $|\supp(u_n)|\geq 2^{bd}|\supp(u_{n'})|$, then  necessarily $n<n'$\,.

{\em (ii)} 
 { An enumeration $\cU$ is {\it strongly admissible} if there is a constant $b\in \bbN$ with the following property:
 for each cube $I_\nu$, $\nu\in\SZ^d$, if $I^{**}_\nu$ denotes the five-fold dilated cube with respect to its center, and if $u_n$ and $u_{n'}$  are supported in  $I^{**}_\nu$ with $|\supp (u_n)| \ge 2^{bd}|\supp( u_{n'}) |$ then necessarily $n< n'$.}

\end{definition}
\smallskip

The notion in (i) was used in  \cite{gsu} for the case $b=1$, but the results stated in that paper continue to hold with this slightly more general definition. 

The stronger notion in (ii) turns out to be more appropriate in the endpoint cases, for which the characteristic functions of cubes may not be pointwise multipliers; cf. \cite{RuSi96}. 
Loosely speaking, in a strongly  admissible enumeration if a Haar frequency $2^k$ shows up at step $n$ (i.e. if $u_n=h^{\e}_{k,\mu}$ for some $k\in \SN_0$) then all Haar functions with Haar frequency $\le 2^{k-b}$ which are `nearby'  (in a well defined sense) have already been counted before step $n$. We refer to \S\ref{admissiblesection} for concrete examples. 

 Finally, we remark that the above distinction is void for the classical Haar system in the unit cube, $\sH([0,1)^d)$, where
admissibility is a straightforward property; for $b=1$ it means that $n< n'$ implies
$|\supp (u_n)| \ge |\supp( u_{n'}) |$  (and could be slightly weakened if $b\geq2$).  The typical example is the lexicographic ordering.

 \smallskip


We now  formulate a theorem involving {\it all} strongly admissible enumerations of the Haar system.
A positive endpoint result is obtained for $B^s_{p,p}$ when $s=d/p-d$ and $\frac{d}{d+1}<p\leq 1$. Also new negative results are obtained for suitable strongly admissible enumerations; see the right of Figure \ref{fig1}. 

   \begin{theorem} \label{schauder-besov} Let $0<p,q\leq \infty$ and $s \in \bbR$. Then, the following statements are equivalent, i.e. (a)$\iff$(b):
  
 \sline  (a) 
Every  strongly admissible  enumeration of $\sH_d $ is a  Schauder basis of $B^{s}_{p,q}(\bbR^d)$.

\bline (b) One  of the following three  conditions is satisfied:

\Benu \item[(i)\;\;] $\quad 1\le p< \infty$, $\quad\; \frac 1p -1< s< \frac1p$, $\quad 0<q< \infty$,

\item[(ii)\;]  $\quad \frac{d}{d+1} < p<  1$, $\quad \frac dp-d<s<1$, $\quad 0<q< \infty$,

\item[(iii)] $\quad \frac{d}{d+1} < p\le 1$, $\quad s=\frac dp-d$, $\quad q=p$.
\Eenu
\end{theorem}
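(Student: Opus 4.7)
The proof splits into the positive implication (b)$\Rightarrow$(a) and the negative converse (a)$\Rightarrow$(b). For the positive direction, cases (i) and (ii) are immediate from Theorem \ref{uncond-besov}: unconditionality yields that every enumeration (strongly admissible or otherwise) is a Schauder basis, provided $\Span\sH_d$ is dense in $B^s_{p,q}$, and density in these ranges is standard via Littlewood--Paley decompositions or atomic arguments. The essential new positive content is therefore case (iii): at the critical diagonal $s=d/p-d$, $q=p$, $d/(d+1)<p\le 1$, I would first establish density of $\Span\sH_d$ in $B^{d/p-d}_{p,p}(\SRd)$ using that Haar functions are (rescaled) atoms at exactly this critical scaling. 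The main task is then the uniform bound
\[
\sup_{R\ge 1,\,\cU}\,\big\|S_R^{\cU}\big\|_{B^{d/p-d}_{p,p}\to B^{d/p-d}_{p,p}}<\infty
\]
taken over all strongly admissible enumerations $\cU$.

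To establish this uniform estimate, my plan is to exploit strong admissibility and decompose $S_R^\cU$ locally, cube by cube, into localized dyadic averaging operators. Concretely, for each $\nu\in\SZd$, the Haar functions $u_n$ with $n\le R$ and $\supp u_n\seq I_\nu^{**}$ form, by Definition \ref{strongly-adm}(ii), an ``almost full'' family: they contain all Haar functions supported in $I_\nu^{**}$ of frequency at most $2^{N_\nu-b}$, together with at most boundedly many higher-frequency leftovers up to $2^{N_\nu}$, where $2^{N_\nu}$ is the maximal Haar frequency used in $I_\nu^{**}$ at step $R$. Summing the corresponding Haar series on $I_\nu^{**}$ realizes, up to a controlled residual, precisely the dyadic averaging operator $\bbE_{N_\nu}$ localized to $I_\nu^{**}$. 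This reduces the problem to a uniform $B^{d/p-d}_{p,p}$-bound on the family $\{\bbE_N\}_{N\ge 0}$, together with control of the residual; at the scale-invariant endpoint $s=d/p-d$, $q=p$, the Haar sequence characterization collapses the Besov norm to an $\ell^p$-sum of Haar coefficients weighted in such a way that $\|h_{k,\mu}^{\e}\|_{B^{d/p-d}_{p,p}}\sim 1$, making truncation of the Haar expansion essentially a contraction.

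For the converse (a)$\Rightarrow$(b) I would construct, for each triple $(1/p,s,q)$ not covered by (i)--(iii), a strongly admissible $\cU$ and an $f\in B^s_{p,q}$ with $\sup_R\|S_R^\cU f\|_{B^s_{p,q}}=\infty$. On the diagonal edge $s=d/p-d$ with $q\ne p$, the obstruction is the mismatch between the $\ell^p$ scaling of critical Haar atoms and the $\ell^q$ summation in the Besov norm: one takes a lacunary sum of Haar atoms placed in disjoint unit cubes and enumerated one frequency level at a time, a pattern compatible with strong admissibility, producing unbounded partial sums. On the remaining boundary edges of $\fP$ (the segments $s=1/p$, $s=1/p-1$, $s=1$, and $1/p=0$), analogous constructions show that the dyadic averaging operators $\bbE_N$ themselves have quantitative norm blow-up in $N$ on $B^s_{p,q}$---the ``asymptotically optimal growth rates'' mentioned in the abstract---and transplanting these test patterns into a strongly admissible enumeration yields the required counterexample. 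The central obstacle is case (iii): one is in a genuine quasi-Banach setting for $p<1$, so classical duality and triangle-inequality arguments fail and must be replaced by atomic decompositions; moreover one must carefully absorb into the residual the ``boundary'' Haar functions whose supports straddle several dilated cubes $I_\nu^{**}$, since strong admissibility only decouples cubes up to this fivefold dilation factor.
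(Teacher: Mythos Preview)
Your overall architecture for (b)$\Rightarrow$(a) in case (iii) matches the paper: decompose $S_R^\cU$ cube by cube using strong admissibility into a local dyadic average $\bbE_{N_\nu}$ plus a bounded residual of single-frequency projections $T_{N_\nu+\kappa}[\cdot,\fa]$ (this is Lemma~\ref{basicdec}), then invoke uniform boundedness of $\bbE_N$ and $T_N$ on $B^{d/p-d}_{p,p}$. However, your justification for the latter is circular. You write that at $s=d/p-d$, $q=p$ ``the Haar sequence characterization collapses the Besov norm to an $\ell^p$-sum of Haar coefficients\ldots making truncation essentially a contraction.'' But a Haar-coefficient characterization of $B^s_{p,q}$ is known only in the \emph{interior} of $\fP$; at the boundary point $s=d/p-d$ it is precisely equivalent to the unconditional basis property you are trying to establish, and in fact unconditionality \emph{fails} there (Theorem~\ref{uncond-besov}). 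The paper instead proves $\sup_N\|\bbE_N\|_{B^{d/p-d}_{p,p}\to B^{d/p-d}_{p,p}}<\infty$ by direct Littlewood--Paley analysis: one estimates $\|L_k\bbE_N L_j\La_j f\|_p$ via the kernel bounds of Proposition~\ref{localizedproposition} and sums in $j,k$ using Propositions~\ref{jk>N}--\ref{j>NkleN}; the endpoint $s=d/p-d$ survives precisely because with $q=p$ the $p$-triangle inequality closes the estimates with no loss.

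A second, related gap: you do not explain why the cube-by-cube bounds reassemble globally. The paper's mechanism is that $B^s_{p,q}$ coincides with its Bourdaud localization $(B^s_{p,q})_{\ell^p}$ \emph{if and only if} $p=q$ (\S\ref{bourdaudsect}); this is exactly what singles out case (iii). The same insight drives several negative cases you misattribute. On the edge $s=d/p-d$ with $q<p$, and on $s=1/p-1$ with $1<p<\infty$, $q\le 1$, Theorem~\ref{basic-sequence-besov} shows $\bbE_N$ \emph{is} uniformly bounded, so your claim that ``$\bbE_N$ themselves have quantitative norm blow-up'' is false there. The failure is global, not local: one builds a specific strongly admissible enumeration (\S\ref{admissiblesection}) for which $S_{R(m)}$ acts as $\bbE_{m-\ell}$ on the $\ell$-th shell $\cA_\ell$, and then a test function spreading atoms across many shells at different frequency levels (Proposition~\ref{admissiblecounterex}) exploits the $\ell^p/\ell^q$ mismatch in the Besov norm to produce $\|S_{R(m)}f\|\gtrsim m^{1/q-1/p}\|f\|$. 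Your lacunary-atom idea is the right germ, but the point is that it defeats the \emph{global} $S_R$, not the local $\bbE_N$.
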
 

\

 Next we explore various weaker properties at the boundary of $\fP$. 
 We  say that an enumeration 
 $\cU$ satisfies the {\it  local Schauder basis property} for $B^s_{p,q}$  if  
 \Be \big\|(S_R^\cU f-f)\,\chi\big\|_{B^s_{p,q}}\to 0
\label{SRchi}
\Ee  for all $f\in B^s_{p,q}(\SR^d)$ and all $\chi\in C^\infty_c(\bbR^d)$.
This implies that the basis expansion holds, $g=\sum_{n=1}^\infty u^*_n(g)u_n$ in $B^s_{p,q}$,
 for  all compactly supported  $g\in B^s_{p,q}(\SR^d)$.
 Similarly we say that $\cU$ satisfies the \emph{local basic sequence property} in $B^s_{p,q}(\SR^d)$ when \eqref{SRchi} holds for all
$f\in {\overline{\Span\sH_d}}^{B^s_{p,q}}$ and all $\chi\in C^\infty_c(\SR^d)$.
The next theorem and Figure \ref{fig2} show the region of validity for the first of these properties.

\begin{theorem} \label{schauder-besov-local}
 Let $0<p,q\leq\infty$ and $s \in \bbR$. Then, the following statements are equivalent, i.e. (a)$\iff$(b):
  
\sline   (a) 
Every  strongly admissible  enumeration of the Haar system $\sH_d $ satisfies the local  Schauder basis property for  $B^{s}_{p,q}(\SR^d)$.

\sline (b) One  of the following four  conditions is satisfied:

(i) $\quad 1\le p< \infty$, $\quad -1+\frac 1p< s< \frac1p$, $\quad 0<q< \infty$,

(ii) $\quad 1\le p< \infty$, $\quad s= -1+\frac 1p$, $\quad 0<q\le 1$,


(iii)  $\quad \frac{d}{d+1} < p<1$, $\quad \frac dp-d<s<1$, $\quad 0<q< \infty$,

(iv) $\quad \frac{d}{d+1} < p<1$, $\quad s=\frac dp-d$, $\quad 0<q\le p$.
\end{theorem}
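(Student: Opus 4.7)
The plan is to prove $(a)\iff(b)$ by reducing the local Schauder basis property to a \emph{local} uniform boundedness of $S_R^\cU$, combined with convergence on the dense subspace $\Span\sH_d$. A standard $\e/3$-argument shows that (a) is equivalent to
\[
\sup_{R\ge 1}\big\|\chi\cdot S_R^\cU f\big\|_{B^s_{p,q}}\lesssim_\chi \|f\|_{B^s_{p,q}},\qquad f\in B^s_{p,q},
\]
holding for every $\chi\in C^\infty_c(\SR^d)$ and every strongly admissible $\cU$.

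For the implication $(b)\Rightarrow(a)$, cases (i) and (iii) of (b) coincide with the open region in Theorem~\ref{uncond-besov}, so unconditionality already yields a Schauder basis in every enumeration. For the endpoint cases (ii) and (iv), fix a cube $Q\supset\supp\chi$ and use strong admissibility with parameter $b$ to choose, for each $R$, the largest integer $N=N(R)$ such that every Haar function of frequency $\le 2^{N-b}$ supported in the dilate $Q^{**}$ has been enumerated by step $R$. This yields the decomposition
\[
\chi\cdot S_R^\cU f \;=\; \chi\cdot \SE_N f \;+\; \chi\cdot V_R f,
\]
where $V_R f$ collects the Haar functions of frequency $2^N$ supported in $Q^{**}$ that appear among $u_1,\dots,u_R$. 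The crucial technical input is the local uniform bound
\[
\big\|\chi\cdot \SE_N f\big\|_{B^s_{p,q}}+\big\|\chi\cdot V_R f\big\|_{B^s_{p,q}}\lesssim_\chi \|f\|_{B^s_{p,q}},
\]
valid precisely under $q\le\min\{1,p\}$ (the common restriction of (ii) and (iv)). This is proved through the Haar-coefficient characterization of $\|\cdot\|_{B^s_{p,q}}$ at the endpoint exponents: on these critical lines, the weighted $\ell^q$-norm over dyadic scales degenerates, and the assumption $q\le\min\{1,p\}$ is exactly what allows a single-scale contribution such as $V_R f$, or a partial sum $\SE_N f$, to be controlled by the full Besov norm after localization by $\chi$. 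Convergence $\chi\cdot\SE_N f\to\chi\cdot f$ on $\Span\sH_d$ is then immediate, and the density argument concludes.

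For the implication $(a)\Rightarrow(b)$, parameters outside $\overline{\fP}$ are already excluded by Triebel~\cite{triebel78,triebel-bases}. It remains to rule out the boundary points of $\fP$ on which the $q$-threshold fails, namely $s=\tfrac1p-1$ with $q>1$ (for $1\le p<\infty$), and $s=\tfrac dp-d$ with $q>p$ (for $\tfrac{d}{d+1}<p<1$). In each case we construct a strongly admissible enumeration $\cU$ and an $f\in B^s_{p,q}$ for which $\|\chi(S_R^\cU f-f)\|_{B^s_{p,q}}\not\to 0$. The function $f$ is built as a lacunary Haar series
\[
f=\sum_{k\ge 0} c_k\, h^{\vec 1}_{k,\mu_k},
\]
with positions $\mu_k$ accumulating at a fixed dyadic vertex inside $\supp\chi$ and coefficients $\{c_k\}$ saturating the boundary Besov norm; $\cU$ is chosen to process Haar scales sequentially so that the partial sums $S_R^\cU f$ reproduce $\SE_N f$ for the critical $N$'s. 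The divergence then reflects the (now) unbounded growth of the local operator norms $\|\chi\cdot\SE_N\|_{B^s_{p,q}\to B^s_{p,q}}$ as $N\to\infty$ once $q$ exceeds the threshold.

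The main obstacle is the sharp two-sided asymptotic analysis of these \emph{local} dyadic averaging norms: we must simultaneously establish the uniform local bound for $\SE_N$ (and for the single-scale remainder $V_R$) in the permitted regime $q\le\min\{1,p\}$, and exhibit matching lower bounds together with the extremal functions realizing them in the excluded regime $q>\min\{1,p\}$. A secondary subtlety is that at these endpoints the indicator $\bbone_Q$ fails to be a pointwise multiplier of $B^s_{p,q}$, which is exactly why the decomposition above requires strong admissibility with the five-fold cube $Q^{**}$ of Definition~\ref{strongly-adm}(ii) rather than mere admissibility: the enlarged cube absorbs the Haar functions straddling $\partial Q$ so that $\chi\cdot S_R^\cU f$ factors cleanly into a global average $\chi\cdot\SE_N f$ plus a harmless local correction.
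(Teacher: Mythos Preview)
Your overall architecture is right, but the two central technical claims are stated rather than proved, and the negative direction is incomplete.

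For $(b)\Rightarrow(a)$, your ``crucial technical input'' is the local uniform bound for $\chi\cdot\bbE_N$ and $\chi\cdot V_R$ in the endpoint cases (ii) and (iv), which you justify by appealing to a ``Haar-coefficient characterization of $\|\cdot\|_{B^s_{p,q}}$ at the endpoint exponents.'' No such characterization is available: on the lines $s=1/p-1$ and $s=d/p-d$ the Haar system fails to be an unconditional basis (Theorem~\ref{uncond-besov}), so there is no equivalence between the Besov norm and any sequence norm of Haar coefficients. The paper proves the uniform boundedness of $\bbE_N$ and $T_N[\cdot,\fa]$ in these endpoint regimes by a direct Littlewood--Paley analysis: one estimates $\|L_k\bbE_N L_j\La_j f\|_p$ and $\|L_k\bbE_N^\perp L_j\La_j f\|_p$ via Corollary~\ref{Upscor}, and then sums over $j,k$ using the $p$-triangle inequality (Propositions~\ref{jk>N}--\ref{j>NkleNp>1}). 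The restriction $q\le\min\{1,p\}$ arises because the $j$-sum at the critical exponent no longer decays geometrically, so one must absorb it into an $\ell^p$ (or $\ell^1$) norm of the $\La_j f$, which controls the $\ell^q$ norm only when $q\le p$ (resp.\ $q\le 1$). Your proposal does not supply this mechanism.

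For $(a)\Rightarrow(b)$, you only address two of the boundary segments. You do not exclude $s=1/p$ (where Haar functions fail to lie in $B^{1/p}_{p,q}$ for $q<\infty$; see \S\ref{necconds=1/p}), nor $s=1$ with $\tfrac{d}{d+1}<p<1$ (where $\Span\sH_d$ is not dense even though $\bbE_N$ is uniformly bounded for $q\le p$; see Proposition~\ref{besovdensity} and Corollary~\ref{failureofdensitycor}), nor $p=\infty$ or $p=\tfrac{d}{d+1}$. Moreover, your counterexample for $s=d/p-d$, $q>p$ via a lacunary Haar series is not the paper's construction and is unlikely to work as stated: the paper uses smooth compactly supported functions $g_l$ with many vanishing moments (\S\ref{lowbdslocaldp-1sect}), precisely because the Besov norm of such test functions can be computed sharply, whereas a Haar series with accumulating positions does not obviously saturate the $B^{d/p-d}_{p,q}$ norm from above.
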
 


We remark that these local properties can be given slightly  stronger statements using 
the Bourdaud definitions  
$(B^s_{p,q})_{\ell^p}$ of localized Besov spaces; see \S\ref{bourdaudsect} below.

	 
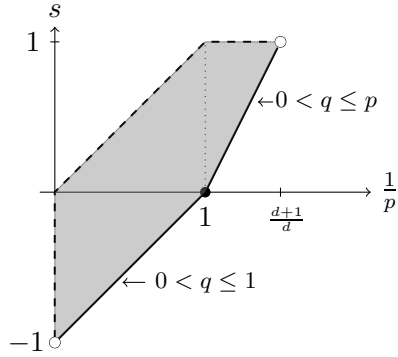
\begin{figure}[ht]
 \centering
{\begin{tikzpicture}[scale=2]
\draw[->] (-0.1,0.0) -- (2.1,0.0) node[right] {$\frac{1}{p}$};
\draw[->] (0.0,0.0) -- (0.0,1.1) node[above] {$s$};

\draw (1.0,0.03) -- (1.0,-0.03) node [below] {$1$};
\draw (1.5,0.03) -- (1.5,-0.03) node [below] {{\tiny $\;\;\frac{d+1}{d}$}};
\draw (0.03,1.0) -- (-0.03,1.00) node [left] {$1$};

\draw[dotted] (1.0,0.0) -- (1.0,1.0);

\draw[dashed, thick] (0.0,-0.97) -- (0.0,0.0) -- (1.0,1.0) -- (1.45,1.0);

\draw[thick] (1.48,0.96) -- (1.0,0.0);
\fill (1,0) circle (1pt);
\draw (1.5,1) circle (1pt);
\draw[<-, thin] (1.35,0.6)--(1.45,0.6);
\node [right] at (1.4,0.6) {{\footnotesize $0<q\leq p$}};

\draw[thick] (0.02,-0.98) -- (1.0,0.0);
\draw (0,-1) circle (1pt) node [left] {$-1$};
\draw[<-, thin] (0.45,-0.6)--(0.6,-0.6);
\node [right] at (0.6,-0.6) {{\footnotesize $0<q\leq 1$}};

\draw[fill=black!50, opacity=0.4]  (0.0,0.0) -- (1,1) -- (1.5,1)
-- (1.0,0.0) -- (0,-1) -- (0.0,0);
\fill[white] (1.5,1) circle (1pt);
\fill[white] (0,-1) circle (1pt);

\end{tikzpicture}
}

\caption{Region for the local Schauder basis property 
in the spaces $B^s_{p,q}(\SR^d)$, depending on the value of $q\in (0,\infty)$.}\label{fig2}
\end{figure}

\begin{remark}
Strong admissibility  may be replaced by admissibility (for the positive results in Theorems
 \ref{schauder-besov} and  \ref{schauder-besov-local}) in the cases where 
$\bbone_{[0,1]^d}$ is a pointwise multiplier of $B^s_{p,q}(\SR^d)$. 
By \cite[\S2.8.7]{Tr83} or \cite[\S4.6.3]{RuSi96}, the latter holds  when \Be
\max\Big\{d(\tfrac1p-1), \tfrac1p-1\Big\} < s<\tfrac1p,
\label{p_multipl}
\Ee
so it applies to the interior points of $\fP$. It also applies in other positive results (such as the local basic sequence property, which will follow from Theorem \ref{basic-sequence-besov}) in the case $s=1$, $\frac{d}{d+1} <p<1$ corresponding to the interior of the horizontal edge of $\fP$. 
\end{remark}


\begin{remark} \label{RemTd}
A similar statement to Theorem \ref{schauder-besov-local} holds for $\sH(\bbT^d)$, the Haar system  in the torus   in the  standard lexicographic enumeration. Namely, it is a Schauder basis on $B^s_{p,q}(\bbT^d)$ if and only if one of  the conditions   (i), (ii), (iii), (iv) in Theorem \ref{schauder-besov-local}  are satisfied. Moreover, in the range \eqref{p_multipl} 
the class of $C^\infty$-functions with compact support in $(0,1)^d$ are dense in 
$B^s_{p,q}((0,1)^d)$  (see \cite[\S3.2]{triebel-compl}) 
and thus it is easy to see that the Schauder basis problem for the Haar systems on $\bbT^d$ and on $(0,1)^d$  are equivalent in this range.  So far this observation does not  apply to the cases corresponding to the  non-horizontal edges of $\fP$ in higher dimensions, however see  Franke's better result \cite[\S4.6]{franke} for the interval $(0,1)$.
\end{remark}

\begin{remark} \label{R_oswald}
{\em (i)} In a classical work \cite{oswaldold}, P. 
Oswald considered, for $0<p<1$, the Schauder basis property (including some endpoint results) for a class of Besov spaces on the interval, ${\sB}^s_{p,q, (1)}(I)$, defined by first order differences. In these classes,
which in general differ from $B^s_{p,q}$, one has a positive answer in
the larger region $1/p-1< s<1/p$ (in particular, for some $s\geq1$); see \cite[Theorem 3]{oswaldold}.

{\em (ii)} In a very recent  separate study \cite{oswald},  Oswald pursued further these questions for both, the class ${\sB}^s_{p,q, (1)}(I^d)$ and the standard Besov spaces
 $B^s_{p,q}((0,1)^d)$. He obtained analogs of the positive results in 
(ii)-(iv) of Theorem \ref{schauder-besov-local}, and 
presented similar counterexamples as ours for the case $s=d/p-d$.
 Contrary to what is stated in that paper, these local results do not  transfer to the spaces on $\bbR^d$ by simply 
enumerating the Haar system, as one may see from Theorem \ref{schauder-besov} above and 
the specific example in Proposition \ref{admissiblecounterex}.

%
\end{remark}


\subsection*{\it Dyadic averaging operators.}  A crucial tool in our analysis will be the {dyadic averaging operator} $\bbE_N$, defined  as the  
conditional expectation 
with respect to the $\sigma$-algebra generated by the set $\sD_N$ of all dyadic cubes of length $2^{-N}$. 
That is, setting \[I_{N,\mu}=2^{-N}(\mu+[0,1)^d),\quad \mu\in\SZ^d,\] 
we have 
\Be\label{expect}\bbE_N f(x)=\sum_{\mu\in \bbZ^d} \bbone_{I_{N,\mu}}(x)\,2^{Nd} \,\int_{I_{N,\mu}}f(y) dy\,,\Ee
at least for $f\in L^1_{loc}(\SR^d)$.

The relation  with the Haar system is given via the  martingale difference operator $\bbE_{N+1}-\bbE_N$  which is the
orthogonal projection onto the space generated by the Haar functions with frequency $2^N$, i.e.
\Be\label{martdiff} 
\bbE_{N+1} f-\bbE_Nf
= \sum_{\ep\in\Upsilon}\sum_{\mu\in \bbZ^d} 2^{Nd} \inn{f}{ h^{\ep}_{N,\mu} }h^{\ep}_{N,\mu}.
\Ee

In addition to $\bbE_N$ we shall need another operator  which  involves Haar functions of a fixed frequency level.
For  $N\in \bbN$ and any $\fa\in \ell^\infty (\bbZ^d\times \Upsilon)$ we set
\Be\label{TNadef}
T_N[f,\fa] =\sum_{\ep\in \Upsilon} \sum_{\mu\in \bbZ^d} a_{\mu,\ep} 2^{Nd} 
\inn{f}{h^{\ep}_{N,\mu}}h^{\ep}_{N,\mu} .
\Ee

One aims for estimates of the operators $f\mapsto T_N [f,\fa]$ that  are uniform in $\|\fa\|_\infty\le 1$.
The relation between the partial sum operators $S_R^\cU$ and the operators $\bbE_N$ and $T_N[\cdot, \fa]$ 
is explained in \S\ref{localizationsect}. In particular, the uniform boundedness of these operators in $B^s_{p,q}$ 
implies the \emph{local basic sequence} property for all strongly admissible enumerations $\cU$. The region of uniform boundedness for these operators is given in the next theorems, and depicted in Figure \ref{fig3}.

\begin{theorem} \label{basic-sequence-besov} Let $0<p<\infty$, $0<q\leq \infty$ and $s\in \bbR$.

(a) The operators $\SE_N$ admit an extension from $\cS(\bbR^d)$ to $B^s_{p,q}(\SR^d)$ such that
$$\sup_{N\ge 0} \|\bbE_N\|_{B^s_{p,q}\to B^s_{p,q}} <\infty$$ 
if and only if one of the following six conditions is satisfied:

\medskip

(i) $\;\;\quad p>1$, $\quad s=\frac1p$, $\quad q=\infty$,

(ii) $\;\quad p \ge 1$, $\quad -1+\frac 1p< s< \frac1p$,  $\quad 0<q\le \infty$,

(iii) $\quad p \ge 1$, $\quad s= -1+\frac 1p$, $\quad 0<q\le 1$,

(iv)  $\!\quad \frac{d}{d+1} < p<1$, $\quad s=1$, $\quad 0<q\le p$,

(v)  $\;\quad \frac{d}{d+1} < p<1$, $\quad d(\frac 1p-1)<s<1$, $\quad 0<q\le \infty$,

(vi) $\quad \frac{d}{d+1} \le  p<1$, $\quad s=d(\frac 1p-1)$, $\quad 0<q\le p$.

\smallskip

\noi (b) If one of the conditions (i)-(vi) is satisfied and if $\|\fa\|_{\ell^\infty(\bbZ^d\times\Upsilon)}\le 1$ then the operators $T_N[\cdot,\fa]$ are uniformly bounded on $B^s_{p,q}(\bbR^d)$.

\end{theorem}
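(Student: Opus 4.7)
I would view both $\bbE_N$ and $T_N[\cdot,\fa]$ as \emph{Haar-coefficient multipliers}. Expanding $f$ formally in the Haar basis as
\[
f=\sum_\mu c^{-1}_\mu\, h^{\vec 0}_{0,\mu} + \sum_{k\ge 0,\epsilon,\mu} c^{k,\epsilon}_\mu\, h^{\epsilon}_{k,\mu},
\]
with $c^{k,\epsilon}_\mu = 2^{kd}\langle f, h^\epsilon_{k,\mu}\rangle$, one sees that $\bbE_N$ truncates the expansion to levels $k<N$, while $T_N[\cdot,\fa]$ rescales the level-$N$ coefficients by the bounded multipliers $a_{\mu,\epsilon}$. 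Consequently, both operator families become uniform contractions (up to a constant depending on $\|\fa\|_\infty$) whenever $B^s_{p,q}(\bbR^d)$ admits an equivalent quasi-norm of the Haar-sequence form
\begin{equation}\label{haarnormplan}
\|c^{-1}\|_{\ell^p(\bbZ^d)} + \Big(\sum_{k\ge 0} 2^{kq(s+d-d/p)} \big\|\{c^{k,\epsilon}_\mu\}_{\epsilon,\mu}\big\|_{\ell^p}^q\Big)^{1/q},
\end{equation}
so the theorem essentially reduces to establishing such a Haar characterization in each of the six cases (i)--(vi) of part (a).

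For the positive direction in (a), the two interior strips (ii) and (v) are classical, being within Triebel's range where the Haar system characterizes $B^s_{p,q}$; I would invoke the results from \cite{triebel78, triebel-bases}. The four endpoint cases each need separate treatment. At the upper endpoints (i) $s=1/p$, $p>1$, $q=\infty$ and (iv) $s=1$, $d/(d+1)<p<1$, $q\le p$, I would carry out a Littlewood-Paley analysis of $\bbE_N f$, observing that the Littlewood-Paley piece of $\bbE_N f$ at scale $2^j$ is controlled by the corresponding pieces of $f$ at nearby scales, and that the constraint on $q$ is dictated exactly by the scaling of a single Haar function in the $B^s_{p,q}$ norm. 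At the lower endpoints (iii) $s=-1+1/p$, $q\le 1$ and (vi) $s=d/p-d$, $q\le p$, the restriction on $q$ makes each $h^\epsilon_{k,\mu}$ a normalized atom in $B^s_{p,q}$, so the Haar reconstruction series converges absolutely and $\bbE_N$ is controlled termwise.

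For the necessity direction in (a), I would construct counterexamples at each forbidden endpoint by aggregating Haar functions across multiple scales or positions. For instance, outside (i) (i.e.\ $p\ge 1$, $s=1/p$, $q<\infty$) one may test $f_N=\sum_{k=0}^{N-1} h^{\vec 1}_{k,0}$, for which the $B^{1/p}_{p,q}$ norm stays bounded while the Littlewood-Paley norm of $\bbE_{N+1} f_N$ exhibits $N$-growth. Analogous aggregated constructions, possibly combined with duality between the upper and lower endpoints, cover the endpoints not in (iii) and (vi); in each case the failure is traced to the same mismatch between the Haar-level scaling and the $\ell^q$-summation.

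Part (b) follows immediately from \eqref{haarnormplan}: since $\ell^\infty$ is a pointwise multiplier on $\ell^p(\bbZ^d\times \Upsilon)$ with norm $\|\fa\|_\infty$, rescaling the level-$N$ coefficients preserves the quasi-norm \eqref{haarnormplan} up to that factor, uniformly in $N$. The main expected obstacle is the quasi-Banach Sobolev endpoint (vi), and in particular the corner $p=d/(d+1)$, where the failure of $\bbone_Q$ to be a pointwise multiplier on $B^s_{p,q}$ rules out free localization and forces the Haar-level analysis to be combined with the strong-admissibility/localization machinery developed in \cite{gsu}.
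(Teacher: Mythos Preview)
Your reduction to the Haar-sequence characterization \eqref{haarnormplan} is the main gap. That characterization is essentially the statement that $\sH_d$ is an \emph{unconditional} basis of $B^s_{p,q}$, and by Theorem~\ref{uncond-besov} this holds only in the open pentagon, i.e.\ in cases (ii) and (v). At every endpoint case (i), (iii), (iv), (vi) unconditionality fails (see \S\ref{uncondfailurep<1}--\S\ref{uncondfailurep>1}), so no norm of the form \eqref{haarnormplan} can be equivalent to $\|\cdot\|_{B^s_{p,q}}$ there. Yet those are precisely the cases where the theorem asserts that $\bbE_N$ \emph{is} uniformly bounded. Uniform boundedness of $\bbE_N$ is strictly weaker than the Haar characterization; in case (iv), for instance, $\bbE_N$ is uniformly bounded even though $\Span\sH_d$ is not dense (Corollary~\ref{failureofdensitycor}). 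Your argument for part (b) inherits the same flaw: once \eqref{haarnormplan} is unavailable, the ``$\ell^\infty$ multiplies $\ell^p$'' observation no longer applies to $T_N[\cdot,\fa]$ at the endpoints.

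The paper avoids this by never invoking a Haar characterization. For sufficiency it writes $\bbE_N-\Pi_N=\bbE_N(I-\Pi_N)-(I-\bbE_N)\Pi_N$ with $\Pi_N$ a Littlewood--Paley projector, and then controls $2^{ks}\|L_k\bbE_N L_j\Lambda_j f\|_p$ and $2^{ks}\|L_k\bbE_N^\perp L_j\Lambda_j f\|_p$ directly via the explicit constants $U_{p,s}(j,k,N)$ of Corollary~\ref{Upscor}; Propositions~\ref{jk>N}--\ref{j>NkleN} (for $p\le 1$) and \ref{jk>Np>1}--\ref{j>NkleNp>1} (for $p\ge 1$) then sum these in $\ell^r$, and the endpoint restrictions on $q$ emerge from whether the resulting geometric sums converge. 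Part (b) comes for free because Proposition~\ref{localizedproposition}(iii) gives identical pointwise bounds for $T_N[\cdot,\fa]$. For necessity your proposed test function at $s=1/p$ misses the point: the obstruction there is simply that $\bbone_I\notin B^{1/p}_{p,q}$ for $q<\infty$ (Proposition~\ref{intersectzero}), so $\bbE_N$ does not even map into the space. The genuinely delicate lower bounds are at $s=1$ (Theorem~\ref{equiv-s=1thm}, oscillatory test functions) and $s=d/p-d$ (Theorem~\ref{noestimateforq>p}, building blocks with many vanishing moments), neither of which is an aggregate of Haar functions.
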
 
  
  \smallskip
  
  Finally we state  a result for $p=\infty$.
 
  \begin{theorem}\label{p=inftythm}
  (i) If $-1<s<0$ then the operators  $\bbE_N$ have  uniformly bounded extensions to  $B^{s}_{\infty, q}(\SR^d)$, for all $0<q\le \infty$.
  
  (ii) If $s=0$ then $\bbE_N$ admits a bounded extension to  $B^0_{\infty,q}(\SR^d)$ if and only if $q=\infty$. Moreover, we have
    $\sup_N\|\bbE_N\|_{B^0_{\infty,\infty} \to B^0_{\infty,\infty} }<\infty$.
 
(iii) If $s=-1\;$ then $\;\sup_N\|\bbE_N\|_{B^{-1}_{\infty,q} \to B^{-1}_{\infty,q} }=\infty$, for all $0<q\leq\infty$. 
  \end{theorem}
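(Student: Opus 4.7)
All three parts rely on interplay between the Littlewood–Paley characterization of $B^s_{\infty,q}$ and the Haar/martingale structure of $\bbE_N$. Throughout, fix a standard smooth LP decomposition $\{\varphi_j\}_{j\ge 0}$ with $\widehat{\varphi_j}$ supported in $|\xi|\asymp 2^j$ for $j\ge 1$, and recall the near-orthogonality bound
\[
\|\Delta_k(\varphi_j*f)\|_\infty \lesssim 2^{-|j-k|}\|\varphi_j*f\|_\infty,\qquad \Delta_k:=\bbE_{k+1}-\bbE_k,
\]
which follows from $\hat h^{\epsilon}$ having a simple zero at $\xi=0$ together with $O(1/|\xi|)$ decay at infinity.

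\emph{Part (i) ($-1<s<0$).} The plan is to establish the Haar–martingale norm equivalence
\[
\|f\|_{B^s_{\infty,q}} \asymp \|\bbE_0 f\|_\infty + \Bigl(\sum_{k\ge 0} 2^{ksq}\|\Delta_k f\|_\infty^q\Bigr)^{1/q}
\]
in the range $-1<s<0$, $0<q\le\infty$. Both directions follow by summing (or taking suprema over) the near-orthogonality estimate above against the LP characterization, with the weights $2^{ksq}$ absorbing the geometric factors $2^{-|j-k|q}$ since $-1<s<0$. Granted this equivalence, the identity $\bbE_N f=\bbE_0 f+\sum_{k<N}\Delta_k f$ exhibits $\bbE_N$ as a truncation of the Haar expansion and gives $\|\bbE_N f\|_{B^s_{\infty,q}}\lesssim \|f\|_{B^s_{\infty,q}}$ uniformly in $N$. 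Alternatively, if one prefers, duality with $B^{-s}_{1,q'}$ (covered by Theorem \ref{basic-sequence-besov}(ii) when $0<-s<1$) yields the same conclusion for $q\ge 1$.

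\emph{Part (ii) positive ($s=0,\,q=\infty$).} The plan is to estimate $\|\varphi_j*\bbE_N f\|_\infty$ uniformly in $j,N$. Split into three regimes. For $j\le N-C$: $\varphi_j$ is essentially constant on each dyadic cube of side $2^{-N}$, so $\varphi_j*\bbE_N f = \varphi_j*f + R_{j,N}$ with $\|R_{j,N}\|_\infty\lesssim 2^{j-N}\|f\|_{B^0_{\infty,\infty}}$. For $|j-N|\le C$: bound crudely using $\|\bbE_N f\|_\infty\le \|f\|_{B^0_{\infty,\infty}}$. For $j\ge N+C$: use that $\bbE_N f$ has no Haar content above level $N$, so the near-orthogonality bound gives decay $2^{-(j-N)}\|f\|_{B^0_{\infty,\infty}}$. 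Taking the supremum in $j$ yields the desired uniform bound.

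\emph{Part (ii) negative ($s=0,\,q<\infty$).} Take any $f\in C_c^\infty(\bbR^d)$ with nonvanishing partial derivative at some dyadic boundary point $x_0$. Then $\bbE_N f$ is piecewise constant with a jump of amplitude $\delta_N\asymp 2^{-N}|\nabla f(x_0)|$ across a face adjacent to $x_0$. The fundamental fact $\varphi_j*H = c_0+o(1)$ as one crosses a jump of a step function (where $c_0=\int_0^\infty \varphi\ne 0$) gives the scale-invariant lower bound $\|\varphi_j*\bbE_N f\|_\infty\ge c\,\delta_N$ for all $j\ge 0$. Therefore
\[
\|\bbE_N f\|_{B^0_{\infty,q}}^q \gtrsim \sum_{j\ge 0} (c\,\delta_N)^q = \infty,
\]
so $\bbE_N f\notin B^0_{\infty,q}$ and $\bbE_N$ admits no bounded extension.

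\emph{Part (iii) ($s=-1$).} For $0<q<\infty$ the explicit choice $f_N(x)=2^{N}\phi(2^N(x-a))$, with $\phi\in\cS(\bbR^d)$ of nonzero integral and $a$ chosen generically (non-dyadic), should do the job. By scaling $\|f_N\|_{B^{-1}_{\infty,q}}\asymp\|\phi\|_{B^{-1}_{\infty,q}}\asymp 1$, while $\bbE_N f_N$ is a piecewise-constant ``spike'' of amplitude $\asymp 2^{N}$ supported on $O(1)$ cubes near $a$. Its LP decomposition yields $\|\varphi_k*\bbE_N f_N\|_\infty\asymp 2^k$ for $0\le k\le N$ (the spike looks like a multiple of a Dirac mass at scales coarser than $2^{-N}$), so
\[
\|\bbE_N f_N\|_{B^{-1}_{\infty,q}}^q \gtrsim \sum_{k=0}^{N-1} 2^{-kq}\cdot 2^{kq} = N,
\]
giving a growth factor $\asymp N^{1/q}$. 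For $q=\infty$ this direct construction yields only a bounded ratio, and a separate argument is needed: the cleanest route is duality, by showing that $\bbE_N$ fails to be bounded on the predual $B^1_{1,1}$. Specifically, for smooth $f$ the $O(2^{-N})$ jumps of $\bbE_N f$ produce $\omega_2(\bbE_N f,t)_1\asymp t$ for $t\lesssim 2^{-N}$, and the defining integral $\int_0^\infty t^{-1}\omega_2(\bbE_N f,t)_1\,dt/t$ diverges logarithmically; a uniform boundedness / duality argument then propagates this failure to $B^{-1}_{\infty,\infty}$, and one may interpolate (or adapt the construction) to obtain the analogue for $q=\infty$.

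The main obstacle will be part (iii) at $q=\infty$, where no spike-type counterexample gives a growing ratio: the duality route requires identifying a suitable testing class for $B^{-1}_{\infty,\infty}$ and a careful control of the modulus-of-smoothness tail of $\bbE_N f$ in $L^1$.
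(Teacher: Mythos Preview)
Your approaches to parts (i) and (ii) are sound and close in spirit to the paper's. For (i) and the positive half of (ii) the paper uses its systematic $L_k\bbE_N L_j$ estimates (the propositions in \S\ref{upbdspge1sect} specialized to $p=\infty$), which encode the same near-orthogonality you invoke; your Haar-characterization route is a legitimate alternative. For the negative half of (ii) the paper simply cites the fact that characteristic functions of cubes lie in $B^{1/p}_{p,q}$ only for $q=\infty$; your jump argument reproves this directly (note however that the lower bound $\|\varphi_j*\bbE_N f\|_\infty\ge c\,\delta_N$ holds only for $j\gtrsim N$, not all $j\ge 0$ --- harmless, since infinitely many $j$ still suffice).

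The genuine gap is in part (iii). Your scaling claim $\|f_N\|_{B^{-1}_{\infty,q}}\approx 1$ for $f_N=2^N\phi(2^N(\cdot-a))$ is false: the dilation $f\mapsto\lambda f(\lambda\cdot)$ preserves the \emph{homogeneous} $\dot B^{-1}_{\infty,q}$ norm but not the inhomogeneous one. For $j\le N$ the function $f_N$ behaves like $2^{N(1-d)}(\int\phi)\,\delta_a$ at scale $2^{-j}$, so $2^{-j}\|\varphi_j*f_N\|_\infty\approx 2^{(j-N)(d-1)}$. In dimension $d=1$ this already gives $\|f_N\|_{B^{-1}_{\infty,q}}\approx N^{1/q}$, and since $\bbE_N f_N$ has the same Littlewood--Paley profile for $j\le N$ (with only an $O(1)$ contribution from $j>N$ coming from its jumps), the ratio $\|\bbE_N f_N\|/\|f_N\|$ stays bounded. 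In dimension $d\ge 2$ one does have $\|f_N\|\approx 1$, but then your claimed lower bound $\|\varphi_k*\bbE_N f_N\|_\infty\approx 2^k$ fails as well: the correct size is $2^{N(1-d)+kd}$, so $2^{-k}\|\varphi_k*\bbE_N f_N\|_\infty\approx 2^{(k-N)(d-1)}$, which again sums to $O(1)$. The spike construction therefore yields a bounded ratio in every dimension.

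The paper proceeds quite differently. For $q>1$ (including $q=\infty$) it observes that the Haar functionals do not extend continuously to $b^{-1}_{\infty,q}$, because characteristic functions of cubes do not belong to $(b^{-1}_{\infty,q})^*=B^1_{1,q'}$ when $q'<\infty$; hence each $\bbE_N$ is individually unbounded. For $0<q\le 1$ the paper first establishes $\|\bbE_N\|_{B^1_{1,\infty}\to B^1_{1,\infty}}\approx N$ via an explicit oscillatory test function (a sum of modulated bumps with slowly drifting centers, \S\ref{B1pq-bd-sect}), and then transfers this lower bound to $B^{-1}_{\infty,q}$ through the duality $(b^{-1}_{\infty,q})^*=B^1_{1,\infty}$. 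Your $q=\infty$ suggestion via $B^1_{1,1}$ is essentially the $q>1$ argument, but your treatment of the remaining range $0<q<\infty$ needs to be replaced.
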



\begin{figure}[ht]\label{uniformbdnessfigure}
 \centering
\subfigure
{\begin{tikzpicture}[scale=2]
\draw[->] (0.03,0.0) -- (2.1,0.0) node[right] {$\frac{1}{p}$};
\draw[->] (0.0,0.03) -- (0.0,1.1) node[above] {$s$};

\draw (1.0,0.03) -- (1.0,-0.03) node [below] {$1$};
\draw (1.5,0.03) -- (1.5,-0.03) node [below] {{\tiny $\;\;\frac{d+1}{d}$}};
\draw (0.03,1.0) -- (-0.03,1.00) node [left] {$1$};

\draw[dotted] (1.0,0.0) -- (1.0,1.0);

\draw[line width=1.1] (0.0,-0.03)--(0.0,-0.97);

\draw (0,0) circle (1pt);
\draw[dashed, thick] (0.02,0.02) -- (0.98,0.98) ;

\draw[thick] (1.03,1.0) -- (1.5, 1.0);
\draw[thick] (1.48,0.96) -- (1.0,0.0);
\fill (1,0) circle (1pt);
\fill (1.5,1) circle (1pt);
\draw (1,1) circle (1pt);
\draw[<-, thin] (1.35,0.6)--(1.45,0.6);
\node [right] at (1.4,0.6) {{\footnotesize $0<q\leq p$}};
\draw[<-, thin] (1.3,.95)--(1.45,0.6);

\draw[thick] (0.02,-0.97) -- (1.0,0.0);
\draw (0,-1) circle (1pt) node [left] {$-1$};
\draw[<-, thin] (0.45,-0.6)--(0.6,-0.6);
\node [right] at (0.6,-0.6) {{\footnotesize $0<q\leq 1$}};

\node [right] at (-0.04,-0.2) {{\tiny $0<q< \infty$}};
\draw[->, thin] (0.3,-0.28)--(0.3,-0.35)--(0.05,-0.35);

 \node [right] at (0.6,-0.6) {{\footnotesize $0<q\leq 1$}}; \node [right] at (0.0,-1.3) {{\footnotesize The cases $0<q<\infty$}};
\end{tikzpicture}
}
\subfigure
{\begin{tikzpicture}[scale=2]
\draw[->] (-0.1,0.0) -- (2.1,0.0) node[right] {$\frac{1}{p}$};
\draw[->] (0.0,-0.9) -- (0.0,1.1) node[above] {$s$};

\draw (1.0,0.03) -- (1.0,-0.03) node [below] {$1$};
\draw (1.5,0.03) -- (1.5,-0.03) node [below] {{\tiny $\;\;\frac{d+1}{d}$}};
\draw (0.03,1.0) -- (-0.03,1.00) node [left] {$1$};

\draw[dotted] (1.0,0.0) -- (1.0,1.0);
\draw[thick] (0.0,-0.97) -- (0.0,0.0) -- (0.98,0.98);
\draw[line width=1.1] (0.0,-0.03)--(0.0,-0.97);
\draw[dashed] (1.02,1) -- (1.5,1.0) -- (1.0,0.0) --
(0.02,-0.98);

\draw[dashed] (1.48,0.96) -- (1.0,0.0);
\draw (1,1) circle (1pt);

\draw[dashed] (0.02,-0.98) -- (1.0,0.0);
\draw (0,-1) circle (1pt) node [left] {$-1$};

\fill (0,0) circle (1pt);

\node [right] at (0.0,-1.3) {{\footnotesize The case $q=\infty$}};
\end{tikzpicture}
}
\caption{Regions for uniform boundedness of $\SE_N$  (hence for the \emph{local} basic sequence property) 
in the spaces $B^s_{p,q}(\SR^d)$.}
\label{fig3}
\end{figure}
Moreover, below we also investigate situations  when the individual operators $\bbE_N$ are bounded but not uniformly bounded, and derive precise growth conditions for the operator norms in such cases. See Theorem \ref{equiv-s=1thm} for complete  results in the case $s=1$, and Theorem \ref{locallowerboundsthm} for the case $s=d/p-d$ and $p\le 1$. A more detailed description of these and other local results is given the next subsection.

  \subsection*{\it Guide through this paper and discussion of further quantitative results.}
  The positive results in the interior of the pentagon $\fP$ in Figure \ref{fig1}, including the unconditionality property, are classical and due to Triebel \cite{triebel73, triebel78, triebel-bases}.
  Moreover, unboundedness results outside the closure of $\fP$ are discussed  in those references and \cite{gsu}.
  
  The new positive results in Theorems \ref{basic-sequence-besov} and \ref{p=inftythm} at the boundary of $\fP$  rely on $L^p$ bounds for the operators $L_k\bbE_N L_j$, where $L_k$ are suitable local means and the operators act on functions with compactly supported Fourier transforms. These bounds were already contained in our previous paper \cite{gsu} (see also \cite{gsu-wavelet} for a proof of such results using wavelets). We review these estimates  in \S\ref{preparatoryresults}, see in particular Corollary \ref{Upscor}.  For both ranges $p\ge 1$ and $p\le 1$ further  straightforward 
  estimates imply  four key propositions  with different outcomes in the four cases depending on the signs of $j-N$ and $k-N$. These propositions are stated  for $p\le 1$ in \S\ref{upbdsple1sect} 
  and for $p\ge 1$ in \S\ref{upbdspge1sect}.
  
 Concerning the negative results in Theorems \ref{basic-sequence-besov} and \ref{p=inftythm}, these are presented as follows. 
 First, when $s=1/p$, 
characteristic functions of cubes (and also  Haar functions)  do not belong to $B^{1/p}_{p,q}$ 
  when $q<\infty$ which rules out these cases. In \S\ref{necconds=1/p} we shall further show  that the space $b^{1/p}_{p,\infty}$ 
  (the closure of the Schwartz class under the $B^{1/p}_{p,\infty}$ 
  norm) intersects the algebraic span of $\sH_d$ only in  $\{0\}$. This is in contrast with the fact, shown in \S\ref{positiveresultondensity}, that $b^{1/p}_{p,\infty}$ is actually contained in ${\overline{\Span\sH_d}}^{B^{1/p}_{p,\infty}}$ if $1<p<\infty$, so some positive result will hold in this case; see Proposition \ref{P_supEN}.

In \S\ref{B1pq-bd-sect} we consider the cases $s=1$.
At the endpoint space $B^1_{1,\infty}$ we show that the operators $\bbE_N$ are individually bounded, 
  but not uniformly bounded, and for large $N$ we have
  $\|\bbE_N\|_{B^1_{1,\infty}\to B^1_{1,\infty}}\approx N$, see  Theorem  \ref{equiv-s=1thm}.
  
  When   $s=1$ and $\frac{d}{d+1}<p<1$ 
  the operators $\bbE_N$ are also individually bounded on $B^1_{p,q}$ 
  but not uniformly bounded if $q>p$. In these cases 
   Theorem  \ref{equiv-s=1thm} implies that  for  large $N$   we have 
  $\|\bbE_N\|_{B^1_{p,q}\to B^1_{p,q}}\approx N^{1/p-1/q}$. The situation is worse at the endpoint $p=d/(d+1)$, that is the vertex of $\fP$ where $s=1=d/p-d$. In  this case  Theorem \ref{locallowerboundsthm} gives an exponential lower bound even for compactly supported functions, while the $\bbE_N$ fail to be individually bounded in the whole $B^1_{d/(d+1),q}(\bbR^d)$ when $q>p=\frac d{d+1}$.

  In \S\ref{leftdottedlines-sect} we discuss 
the simpler situation on the line $s=1/p-1$ with $1<p\le \infty$.
The cases $q>1$ are easily ruled out because Haar functions do not belong to the dual space $(B^{1/p-1}_{p,q})^*=B^{1/p'}_{p', q'}$.
In the cases $0<q\leq1$, a lower bound $\|\SE_N\|_{B^{-1}_{\infty,q}\to B^{-1}_{\infty,q}}\gtrsim N$ is obtained by duality in \S\ref{Ss-1}.
  
In \S\ref{ple1slowerendpt} we gather the negative results for Theorem \ref{basic-sequence-besov} at the edge $s=d/p-d$  with  $p\leq 1$. 
Again, we rule out the cases  $q>1$, as Haar functions do not belong to the dual space $(B^{d/p-d}_{p,q})^*=B^0_{\infty, q'}$. Moreover, we show in Theorem \ref{noestimateforq>p} that the  individual operators $\bbE_N$ are unbounded on $B^{d/p-d}_{p,q}$ when $q>p$.  We shall actually prove sharp results if one restricts to compactly supported functions.
  To quantify these we use the following definition.
  
\begin{definition}\label{Opdefinition}
Let $Q$ be an open dyadic cube in $\bbR^d$ of side length $\ge 1/2$ and $X$ be a (quasi-)Banach space of tempered distributions $\mathcal{S}'(\bbR^d)$. For a 
linear operator $T$ defined on those $f\in X$ which are supported in $Q$ we set \Be \label{localnorms}
 \rhoop(T,X,Q)=   \sup\big\{ \|Tf\|_{X} 
 : \,\,
\|f\|_{X}\le 1,\, \supp(f)\subset Q\big\}  .
\Ee
\end{definition}

In Theorem \ref{locallowerboundsthm} the precise growth of 
$\rhoop(\bbE_N,B^{d/p-d}_{p,q}, Q) $ is obtained for the range
 $\frac{d}{d+1}\le p\le q\leq 1$ 
where it is shown that
  \[ \rhoop\big(\bbE_N,B^{d/p-d}_{p,q}, Q\big)\approx  \big(2^{Nd}|Q|\big)^{\frac 1p-\frac 1q}.\]
  We also show that the above-mentioned lower bounds for $s=1$ have a sharp local analogue,
  namely for every cube $Q$ of sidelength $\geq1$ one has 
  \[
	\rhoop\big(\bbE_N, B^1_{p,q},Q\big)\approx N^{\frac1p-\frac1q},\quad 
 \tfrac{d}{d+1}<p<1,\quad q\in[p,\infty].
\]

	We now turn to the uniform boundedness of the operators $S_R^\cU$, associated with strongly admissible enumerations $\cU$.
    We prove in \S\ref{SS91} that if 
	$\SE_N$ (and $T_N[\cdot,\fa]$) are uniformly bounded in $B^s_{p,q}$
		then we also 
     have, for each fixed $Q$, 
      \Be\label{localopnorm}
      \sup_{R\in \bbN} \rhoop\big(S^\cU_R, B^s_{p,q}, Q\big)<\infty.
      \Ee
      Assuming \eqref{localopnorm}, one    
     has  the local Schauder basis property if and only if 
       the span of the Haar system is dense in $B^s_{p,q}$. 
       
      The density of $\Span\sH_d$ in $B^s_{p,q}$ is studied separately in \S\ref{densityandapprsect}.
			It clearly fails when $p=\infty$ or $q=\infty$ because $B^s_{p,q}$ is not separable in those cases.
       When $s=1$, we also show that density fails in $B^1_{p,q}$ when $\frac{d}{d+1}\leq p<1$ and $q\le p$. This gives the negative results in Theorem \ref{schauder-besov-local} for those cases. We do not know, however, whether density should also fail in the remaining cases $q>p$; see our discussion in \S\ref{failureofdensitysect}.

  %

 The positive Schauder basis  results in Theorem \ref{schauder-besov} 
are obtained in \S\ref{bourdaudsect}. They follow from \S\ref{SS91} and the fact that the $B^s_{p,q}(\SR^d)$-norms can be `localized' if and only if $p=q$. Moreover, we actually prove the Schauder basis (resp. basic sequence) property in the Bourdaud spaces $(B^s_{p,q})_{\ell^p}$, in the range of Theorem \ref{schauder-besov-local} (resp. \ref{basic-sequence-besov} and \ref{p=inftythm}). We remark that these spaces coincide with $B^s_{p,q}$ if and only if $p=q$.  Alternatively, the positive statement in (iii) of Theorem \ref{schauder-besov} is also a special case of a more general result for  
     Triebel-Lizorkin spaces $F^s_{p,q}(\SR^d)$ in \cite{gsu-endpt}.

 In \S\ref{admissiblesection} we construct an explicit strongly admissible enumeration $\cU$ for which 
\[
S^\cU_{R(m)} f=\bbE_m f \quad \text{ if } \supp (f) \subset (-5,5)^d,
\]
for a suitable sequence  $R(m)$. One can then apply the examples
     on unboundedness of $\bbE_N$ when restricted to functions on cubes, alluded to above, to see that
     \eqref{localopnorm} fails. This gives the negative results in Theorems \ref{schauder-besov-local}  and  \ref{schauder-besov}
	at the edge $s=d/p-d$, for $\frac{d}{d+1}<p\le 1$ and all $q>p$.

     This same enumeration $\cU$ is used in  \S\ref{failurestronglyadmsect} to show 
     that, when $q\in(0,p)$, the operators $S_R^\cU$ are not uniformly bounded in the whole spaces 
		$B^{d/p-d}_{p,q} (\bbR^d)$ if $\frac{d}{d+1}<p\le 1$, or $B^{1/p-1}_{p,q} (\bbR^d)$ if $1<p<\infty$. 
		Hence $\cU$ is not a Schauder basis in these cases.      
  
  Finally, regarding the negative results in Theorem \ref{uncond-besov},  examples showing 
the failure of unconditionality  for  parameters $(1/p,s)$ on the boundary  of $\fP$ are given
in  \S\ref{uncondfailurep<1} 
for the case $B^{d/p-d}_{p,q}$ with $\frac{d}{d+1}<p\le 1$, and in
 \S\ref{uncondfailurep>1}  for the case $B^{1/p-1}_{p,q}$ with $1<p<\infty$. Since the argument 
 in \S\ref{uncondfailurep<1}  
 also applies to a similar result for Triebel-Lizorkin spaces, we include lower bounds for those as well.
  
\subsubsection*{Acknowledgements}

The authors would like to thank the Isaac Newton Institute for Mathematical Sciences, Cambridge, for support and hospitality during the program 
 \emph{Approximation, Sampling and Compression in Data Science} where some work on this paper was undertaken. This work was supported by EPSRC grant no EP/K032208/1. G.G. was supported in part by grants MTM2016-76566-P, MTM2017-83262-C2-2-P and Programa Salvador de Madariaga PRX18/451 from Micinn (Spain), and 
grant 20906/PI/18 from Fundaci\'on S\'eneca (Regi\'on de Murcia, Spain). A.S. was supported in part by National Science Foundation grants DMS 1500162 and 1764295. 
T.U. was supported in part by 
 Deutsche Forschungsgemeinschaft (DFG), grant 403/2-1.
 	
	\newpage
  
\section{Preparatory results}\label{preparatoryresults}

\subsection{\it Besov quasi-norms}
\label{S_qnorm}
Let $s\in\SR$ and $0<p\leq \infty$ be given. Throughout the paper we fix a number $A>d/p$ and an integer 
\Be \label{Mcondition} M >A+|s|+2 . 
\Ee 
Consider two functions $\beta_0, \beta \in C^\infty_c(\SRd)$, 
supported in $(-1/2,1/2)^d$, with the properties
 $|\widehat{\beta}_0(\xi)|>0$ if  $|\xi|\leq1$, 
 $|\widehat{\beta}(\xi)|>0$ if $1/8\leq|\xi|\leq1$ and $\beta$ has vanishing
moments up to order $M$, that is
\Be\label{moments-M}
\int_{\SR^d} \beta(x) \;x_1^{m_1}\cdots x_d^{m_d}\,dx =0, \quad\forall\;m_i\in \bbN_0\;\mbox{ with  $\;m_1+\ldots +m_d\leq M$.}
\Ee
The optimal value of $M$ is irrelevant for the purposes of this paper, and \eqref{Mcondition} suffices for our results.
We let  $\beta_k:=2^{kd}\beta(2^k\cdot)$ for each $k\geq1$, and  denote 
\[L_k(f)=\beta_k*f\]
whenever $f\in \cS'(\SR^d)$. It is then known, see e.g. \cite[2.5.3]{triebel2}, that an equivalent quasi-norm in the Besov spaces $B^s_{p,q}(\SR^d)$, $0<q\leq\infty$, is given by
\Be
\big\|g\big\|_{B^s_{p,q}}\,\approx\,\Big\|\big\{2^{ks}L_k g\big\}_{k=0}^\infty\Big\|_{\ell^q(L^p)}\, .
\label{Bspq_localmeans}\Ee
Recall also that  $b^{s}_{p,q}$ denotes the closure of $\cS(\SR^d)$ in the $B^s_{p,q}$ norm.
When $p<\infty$ and $q=\infty$, it not difficult to see that $b^{s}_{p,\infty}$ coincides with the set of all $g\in\cS'(\SR^d)$ such that
\Be
\lim_{k\to\infty}2^{ks}\|L_kg\|_p=0.
\label{bsinfty}
\Ee
The space $b^{s}_{\infty,\infty}$ coincides with the set of all $g\in\cS'(\SR^d)$ such that $L_k g\in C_0$ for all $k\in \bbN$ and such that 
$\lim_{k\to\infty}2^{ks}\|L_kg\|_\infty=0$.

Next, let $\eta_0\in C^\infty_c(\SRd)$ be supported on $\{\xi: |\xi|<3/8\}$ and such that $\eta_0(\xi)=1$ if $|\xi|\leq1/4$. We consider the following frequency localization operators 
\begin{subequations} 
\begin{align}\label{La0def}
\widehat{\La_0 f}(\xi) &=\frac{\eta_0(\xi)}{\widehat \beta_0(\xi)}\widehat f(\xi)\,,
\\
\label{Lakdef}
\widehat{\La_k f}(\xi) &=\frac{\eta_0(2^{-k}\xi) -\eta_0(2^{-k+1}\xi)}{\widehat \beta(2^{-k}\xi)}\widehat f(\xi), \quad k\ge 1,
\end{align}
\end{subequations}
so that  $f=\sum_{j=0}^\infty L_j \La_j f $  with convergence in $\cS'$. 
It is also well-known that
\Be  \big\|f\big\|_{B^s_{p,q}}\,\approx\,\Big\|\big\{2^{ks}\La_k f\big\}_{k=0}^\infty\Big\|_{\ell^q(L^p)}\, .\label{Bspq_freq}\Ee
In particular, if we let $\Pi_N=\sum_{j=0}^NL_j\La_j$, then 
\Be\sup_N\|\Pi_N f\|_{B^s_{p,q}}\lesssim \|f\|_{B^s_{p,q}}.\label{PiN}\Ee

Below we shall be interested in uniformly bounded extensions of the dyadic averaging operators $\SE_N$ defined in \eqref{expect}.
Observe that
\[
\SE_N-\Pi_N=\SE_N\,(I-\Pi_N)\,-\,(I-\SE_N)\,\Pi_N,
\]
so if we denote 
\[ \bbE_N^\perp= I-\bbE_N,\]
then, using \eqref{Bspq_localmeans}, we have
\Bea
\big\|\SE_N f-\Pi_N f\big\|_{B^s_{p,q}} & \lesssim &  
\;\Big\|\big\{2^{ks}\sum_{j=N+1}^\infty L_k\SE_NL_j\La_jf\big\}_{k=0}^\infty\Big\|_{\ell^q(L^p)}
\;+ \label{SENPiN}\\
& & +
\;\;\Big\|\big\{2^{ks}\sum_{j=0}^N L_k\SE^\perp_NL_j\La_jf\big\}_{k=0}^\infty\Big\|_{\ell^q(L^p)}.\nonumber
\Eea
Thus, as in \cite{gsu}, the uniform bounds of $\SE_N$ will be reduced to suitable estimates for the compositions $L_k\bbE_N L_j$ and $L_k\bbE_N^\perp L_j$, for each $j,k\geq0$.

\subsection{\it Local estimates}
We consider the following Peetre maximal operators: if $j\geq0$ and $g$ is continuous, then
\begin{align*}
\cM_j g(x)&= \sup_{|h|_\infty \le 2^{-j+5}} |g(x+h)|,
\\
\cM_{A,j}^{*} g(x)&
= \sup_{|h|_\infty \le 2^5} \frac{|g(x+h)|}{(1+2^j |h|)^A},
\\
\fM_{A,j}^{**} g(x)&= \sup_{h\in \bbR^d} \frac{|g(x+h)|}{(1+2^j |h|)^A}.
\end{align*}
Clearly, we have the pointwise relations 
$\cM_j g\lc \cM_{A,j}^*g\le\fM^{**}_{A,j} g$.

The following lemma was proved in \cite[Lemma 2.2]{gsu} using the cancellation properties of $L_{\max\{j,k\}}$.

\begin{lemma}\label{LjLk}
For $j,k\ge 0$ we have
\Be\label{LjLkpt} |L_k L_j g(x) |\lc 2^{-|k-j|(M-A)} \cM_{A,\max\{j,k\}}^{*} g(x).\Ee
\end{lemma}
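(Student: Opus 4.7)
The plan is to exploit the $M$ vanishing moments of the higher-frequency factor via a Taylor expansion, together with pointwise bounds on derivatives of $L_j g$ in terms of the Peetre maximal function $\cM_{A,\max(j,k)}^{*}g$. By commutativity of convolution $L_k L_j = L_j L_k$, I may assume $k \geq j$; the case $k=j=0$ is immediate from the boundedness and compact support of $\beta_0*\beta_0$, so I take $k \geq 1$, in which case $\beta_k$ inherits the $M$-fold vanishing moment condition \eqref{moments-M}.

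Writing $L_k L_j g(x) = \int \beta_k(y)\, L_j g(x-y)\,dy$ and Taylor-expanding $L_j g$ about $x$ up to order $M$, the polynomial terms in $y$ integrate to zero against $\beta_k$, leaving only the integral remainder of order $M+1$. Since $|y|_\infty \lesssim 2^{-k}$ on the support of $\beta_k$ and $\|\beta_k\|_1$ is uniformly bounded, this produces
\[
|L_k L_j g(x)| \lesssim 2^{-k(M+1)}\,\sup_{|\alpha|=M+1,\,t\in[0,1]} |\partial^\alpha L_j g(x-ty)|.
\]

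For the derivative bound, $\partial^\alpha L_j g = 2^{j|\alpha|}\,(\partial^\alpha \beta)_j * g$, with $(\partial^\alpha \beta)_j$ supported in a cube of radius $2^{-j-1}$. Thus for $|y|_\infty \lesssim 2^{-k}$ and $t \in [0,1]$, the convolution integral is restricted to $v$ with $|v-x|_\infty \leq 2^{-k-1}+2^{-j-1} \leq 2^{-j} \leq 2^5$. The defining inequality of $\cM_{A,k}^{*}$ then yields $|g(v)| \leq (1+2^k|v-x|)^A\,\cM_{A,k}^{*}g(x) \lesssim 2^{(k-j)A}\,\cM_{A,k}^{*}g(x)$, and since $(\partial^\alpha \beta)_j$ has $L^1$-norm uniformly bounded in $j$, I conclude
\[
|\partial^\alpha L_j g(x-ty)| \lesssim 2^{j(M+1)}\,2^{(k-j)A}\,\cM_{A,k}^{*}g(x).
\]
Inserting this into the previous display gives $|L_k L_j g(x)| \lesssim 2^{-(k-j)(M+1-A)}\,\cM_{A,k}^{*}g(x)$, which is slightly stronger than the asserted bound $2^{-(k-j)(M-A)}\,\cM_{A,k}^{*}g(x)$.

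The main obstacle is controlling $\partial^\alpha L_j g$ near $x$ by the Peetre maximal function at scale $k$ (the relevant scale for $\beta_k$) rather than the natural scale $j$ of $L_j$: the weight $(1+2^k|\cdot|)^A$ appearing in $\cM_{A,k}^{*}$ is too stringent over the $2^{-j}$-neighbourhood actually seen by $(\partial^\alpha\beta)_j$, producing the blow-up factor $2^{(k-j)A}$. This is precisely why the hypothesis \eqref{Mcondition} requires $M>A$: without that excess, the polynomial loss from passing to the tighter Peetre scale would overwhelm the cancellation gained from the vanishing moments.
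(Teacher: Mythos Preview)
Your argument is correct and is essentially the standard proof: exploit the $M$ vanishing moments of the high-frequency factor via Taylor expansion, then control the derivatives of $L_jg$ pointwise by the Peetre maximal function at scale $k$. The paper itself does not reprove the lemma but cites \cite[Lemma~2.2]{gsu}, adding only the observation that the convolution kernel of $L_jL_k$ has support of sidelength $2^{-j}+2^{-k}\le 2$, which is exactly what permits replacing the global $\fM_{A,\max\{j,k\}}^{**}$ used in \cite{gsu} by the restricted $\cM_{A,\max\{j,k\}}^{*}$; you make the same support observation implicitly when you bound $|v-x|_\infty\le 2^{-j}\le 2^5$.
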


 We remark that
 the larger maximal function
$\fM_{A,\max\{j,k\}}^{**}f $ was used in \cite[Lemma 2.2]{gsu}, 
in place of $\cM_{A,\max\{j,k\}}^{*}f$.
However, since the convolution kernel  of $L_jL_k$ 
is supported on a cube of sidelength $2^{-j}+2^{-k}$, it is clear that \eqref{LjLkpt}  holds as well. 

From  our previous work \cite{gsu} we  have the following crucial estimates.

 \begin{proposition} \label{localizedproposition}  Let $0<p\leq \infty$ and
 \Be \label{BjkN}
 B_p(j,k,N)= \begin{cases} 
 2^{N-j}\,2^{\frac{j-k}p}\,
 2^{(j-N)(d-1)(\frac1p-1)_+}  &\text{ if } j,k>N,
 \\
 2^{\frac{N-k}p} 2^{j-N}  &\text{ if } j\le N<k,
 \\
  2^{k-N} 2^{j-N}2^{(N-k)d(\frac1p-1)_+} &\text{ if } 0\le j,k\le N,
  \\
  2^{k-j+\frac{j-N}p+[N-k+(j-k)(d-1)](\frac1p-1)_+} &\text{ if }  k\le N<j.
 \end{cases} 
 \Ee
 Then the following inequalities hold for all continuous functions $g$\,:
\sline  (i) For $j\ge N+1$,
  \begin{multline} \label{LkENLjgeN}
  \|L_k\bbE_N[L_j  g]\|_p 
  \\ \lc
 \begin{cases}
   B_p(j,k,N) 
   \|   \cM_{j}g   \|_p 
   &\text{ if } k\ge N+1,
   \\
  B_p(j,k,N)   \|   \cM_{j} g   \|_p + 2^{-|j-k|(M-A)}\|\cM^{*}_{A,j} g
  \|_p &\text{ if } 0\le k\le N.
  \end{cases}
  \end{multline}
  
  \sline (ii) For $0\le j\le N$,
 \begin{multline}
\label{LkENLjleN}
  \|L_k\ENp [L_jg]\|_p \\
  \lc \begin{cases}  B_p(j,k,N)\|\cM_j g\|_p+ 2^{-|j-k|(M-A)} \|\cM_{A,j}^{*} g\|_p
  &\text{ if } k\ge N+1,
  \\ B_p(j,k,N) \|\cM_{j} g\|_p
  &\text{ if } 0\le k\le N.
  \end{cases}
\end{multline}

\sline (iii) The same bounds hold if the operators $\bbE_N$ in (i) and 
$\bbE_N^\perp$ in (ii) are replaced by $T_N[\cdot,\fa]$
(as defined in \eqref{TNadef}), uniformly in $\|\fa\|_\infty\le 1$.
\end{proposition}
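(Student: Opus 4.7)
The plan is to prove each of the four cases in \eqref{LkENLjgeN}--\eqref{LkENLjleN} by writing out the integral kernel of $L_k\bbE_N L_j$ (resp.\ $L_k\bbE_N^\perp L_j$), reducing everything to pointwise bounds in terms of the Peetre maximal envelopes $\cM_j g$ and $\cM^*_{A,j} g$. Recall that $\bbE_N$ has the explicit kernel $K_N(x,y)=\sum_{\mu\in\bbZ^d}2^{Nd}\bbone_{I_{N,\mu}}(x)\bbone_{I_{N,\mu}}(y)$, so the composition in question boils down to controlling inner integrals of the type $\int\beta_k(x-u)\bbone_{I_{N,\mu}}(u)\,du$ and $\int\beta_j(v-y)\bbone_{I_{N,\mu}}(v)\,dv$. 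These are to be estimated using (a) the moment conditions \eqref{moments-M} on $\beta$ when we need to exploit smoothness, and (b) the scale comparison between $2^{-j}$, $2^{-k}$ and $2^{-N}$ when we use only size.

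Organize the proof according to the four regimes of $B_p(j,k,N)$. In the regime $j,k>N$ (first line of \eqref{BjkN}), both $L_j g$ and $L_k$ oscillate on a scale finer than $2^{-N}$, so $\bbE_N L_j g$ is concentrated in the thin strip of width $\sim 2^{-j}$ around the boundary hyperplanes of the dyadic mesh $\sD_N$; a Whitney-type decomposition of this strip into $\sim 2^{(j-N)(d-1)}$ cubes of side $2^{-j}$ combined with the quasi-triangle inequality in $\ell^p$ yields the extra factor $2^{(j-N)(d-1)(1/p-1)_+}$. In the regime $j\leq N<k$, $L_j g$ is essentially constant on the cubes $I_{N,\mu}$, so $\bbE_N L_jg\approx L_j g$ with pointwise error $O(2^{j-N}\cM_j g)$; applying $L_k$ of finer scale produces the factor $2^{(N-k)/p}$ from averaging across a jump discontinuity of codimension one in $\ell^p$. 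The regime $j,k\leq N$ is handled analogously by working with $\bbE_N^\perp L_j g$ and using the first-order Taylor expansion of $L_j g$ on each $I_{N,\mu}$. The remaining regime $k\leq N<j$ is the reflection of case 2 (by testing against $L_k$ or by duality in the kernel). Throughout, the tail contribution from $L_k\bbE_N L_jg$ that falls outside the region of kernel support is always dominated via Lemma \ref{LjLk} by $2^{-|j-k|(M-A)}\cM_{A,j}^*g$, which produces the additional summand in \eqref{LkENLjgeN} (case $k\leq N$) and \eqref{LkENLjleN} (case $k\geq N+1$).

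The main obstacle I expect is the bookkeeping in the quasi-Banach cases $p<1$: the $(1/p-1)_+$ exponents in $B_p(j,k,N)$ must match exactly the codimensions of the singular set of $\bbE_N$ along each hyperplane of the dyadic mesh, which requires care in counting how many boundary cubes contribute and in distributing the $\ell^p$ sum correctly. For part (iii), observe that the martingale-difference identity \eqref{martdiff} writes $\bbE_{N+1}-\bbE_N$ as a sum of $h^\epsilon_{N,\mu}$-projections with all coefficients equal to one; the general operator $T_N[\cdot,\fa]$ differs only by the bounded multipliers $a_{\mu,\epsilon}$, whose kernel is supported on the same products $I_{N,\mu}\times I_{N,\mu}$ with bound $\|\fa\|_\infty\leq 1$. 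Hence all kernel-level estimates carried out for $\bbE_N-\bbE_{N-1}$ (equivalently for $\bbE_N^\perp-\bbE_{N+1}^\perp$) transfer verbatim, with a telescoping/summation argument over $N$ absorbed into the same $B_p(j,k,N)$ bounds.
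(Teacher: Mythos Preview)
The paper does not actually prove this proposition here; the remark immediately following it says the bounds are taken from \cite[\S2]{gsu}, with only a minor extension of the stated hypotheses. Your outline is broadly consistent with the approach used there (and with the fragments visible later in the paper, e.g.\ Lemma~\ref{supportlemma}): boundary localization of $\bbE_N L_jg$ via the sets $\cU_{N,j}$, pointwise Lipschitz estimates for $\bbE_N^\perp L_jg$ when $j\le N$, and the counting of boundary cubes that produces the $(1/p-1)_+$ exponents.

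Two points need sharpening. First, your treatment of the case $k\le N<j$ as a ``reflection'' of $j\le N<k$ is not right as stated: the former concerns $\bbE_N$ and the latter $\bbE_N^\perp$, so kernel symmetry in $(j,k)$ does not interchange them, and $L^p$--$L^{p'}$ duality is unavailable for $p<1$. In \cite{gsu} this case is handled directly: one first bounds $\bbE_N L_jg$ pointwise using the cancellation of $\beta_j$ (this is where the $2^{(j-N)/p}$ factor enters, as in Lemma~\ref{supportlemma}(ii)), and then estimates $L_k$ of that piecewise-constant function using the vanishing moments of $\beta_k$; the splitting $L_k\bbE_N L_j=L_kL_j-L_k\bbE_N^\perp L_j$ is what produces the extra $2^{-|j-k|(M-A)}\|\cM^*_{A,j}g\|_p$ term.

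Second, your argument for (iii) is garbled. No telescoping over $N$ is needed or helpful; the point is simply that the kernel of $T_N[\cdot,\fa]$ is supported on $\bigcup_\mu I_{N,\mu}\times I_{N,\mu}$, is bounded by $C_d\|\fa\|_\infty 2^{Nd}$, and satisfies \emph{both} cancellation conditions $T_N[\bbone_{I_{N,\mu}},\fa]=0$ and $\int_{I_{N,\mu}}T_N[f,\fa]=0$. The proofs of (i) and (ii) use only these kernel-level properties of $\bbE_N$ and $\bbE_N^\perp$ respectively, so they apply to $T_N$ verbatim in each regime without any summation.
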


{\it Remark.} These bounds are contained in \cite[\S2]{gsu}, although
the statement of \cite[Proposition 2.1]{gsu} is slightly less general.
Namely, applying these bounds to $g\in \cS'(\bbR^d)$ such that $\supp \widehat g\subset\{|\xi|\leq 2^{j+1}\}$,
and using additionally the Peetre inequality
$ \|\fM_{A, j}^{**}  g\|_p\lc \|g\|_p$, for $A>d/p$, one obtains \cite[Proposition 2.1]{gsu}. 
The formulation here will be applied later to functions of the form $g=\zeta\,\La_j f$ with $f\in \cS'(\bbR^d)$ and
 $\zeta\,\in C^\infty_c$. \qed

\

The statement of Proposition \ref{localizedproposition} can be put into a more convenient form.
First, when $g=\Lambda_j f$, the Peetre maximal inequality gives
$\|\fM_{A,j}^{**}(\La_j f)\|_p\lc\|\La_j f\|_p$ provided that $A>d/p$. Next, if $M\geq A+1$ then in 
the cases 
$k\le N< j$ 
and
$j\le N< k$  the term $2^{-|j-k|(M-A)}$ is dominated by $B_p(j,k,N)$ and thus the statement can be simplified. 
Finally, we shall use the quantities
\Be
U_{p,s}(j,k,N):= 2^{(k-j) s} B_p(j,k,N)\;;
\label{Ups_def}
\Ee
see also \eqref{BjkN-ple1} and \eqref{BjkN-pge1} below.
We then obtain

\begin{corollary} \label{Upscor}
Let $U_{p,s}(j,k,N)$ be as in \eqref{Ups_def}.
Then for all $f\in \cS'(\bbR^d)$
\Be\label{Upscorrfirst}
2^{ks} \|L_k\bbE_N L_j \La_j f \|_p \lc
U_{p,s}(j,k,N)\, 2^{js} \,\|\La_j f\|_p,\;\; \text{ if } j\ge N+1,
\Ee
and
\Be\label{Upscorrsecond}
2^{ks} \|L_k\bbE_N^\perp  L_j \La_j f \|_p \lc
U_{p,s}(j,k,N) \,2^{js}\, \|\La_j f\|_p,\;\; \text{ if } j\le N.
\Ee
The same holds with $\SE_N$ and $\SE^\perp_N$ replaced by $T_N[\cdot,\fa]$
if $\|\fa\|_\infty\le 1$.
\end{corollary}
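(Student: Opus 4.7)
The plan is to derive Corollary \ref{Upscor} essentially as a direct consequence of Proposition \ref{localizedproposition} (applied with $g = \La_j f$), together with two standard reductions: a Peetre-type maximal bound that eliminates the maximal functions on the right-hand side, and an exponent comparison that absorbs the error term $2^{-|j-k|(M-A)}$ into $B_p(j,k,N)$.

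First I would observe that $\La_j f$ has Fourier support in $\{|\xi| \leq 2^{j+1}\}$ for $j \geq 1$ (and in $\{|\xi| \leq 1\}$ for $j=0$) by construction of the cutoffs in \eqref{La0def}--\eqref{Lakdef}. Since $A > d/p$ by the standing assumption, Peetre's maximal inequality yields
\[
\|\fM^{**}_{A,j}(\La_j f)\|_p \lesssim \|\La_j f\|_p.
\]
The pointwise bounds $\cM_j g \leq \cM^{*}_{A,j} g \leq \fM^{**}_{A,j} g$ then control both maximal norms appearing in \eqref{LkENLjgeN} and \eqref{LkENLjleN}, with $g = \La_j f$, by $\|\La_j f\|_p$.

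Next I would handle the two mixed regimes ($k \leq N < j$ in (i), and $j \leq N < k$ in (ii)), which carry an extra term of the form $2^{-|j-k|(M-A)}\|\cM^{*}_{A,j} g\|_p$. Inspecting the corresponding entries of $B_p(j,k,N)$ and using \eqref{Mcondition}, namely $M > A + |s| + 2$, together with the fact that $A > d/p$, an exponent count gives $2^{-|j-k|(M-A)} \lesssim B_p(j,k,N)$ in both regimes. Consequently
\[
\|L_k \bbE_N L_j \La_j f\|_p \lesssim B_p(j,k,N)\,\|\La_j f\|_p \quad (j \geq N+1),
\]
and analogously for $L_k \bbE_N^\perp L_j \La_j f$ when $j \leq N$. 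Multiplying through by $2^{ks}$ and invoking $U_{p,s}(j,k,N) = 2^{(k-j)s} B_p(j,k,N)$ produces \eqref{Upscorrfirst} and \eqref{Upscorrsecond}.

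The extension to $T_N[\cdot,\fa]$ with $\|\fa\|_\infty \leq 1$ is immediate, since part (iii) of Proposition \ref{localizedproposition} records the identical pointwise bounds for that family. The main (and essentially only) nontrivial step is the exponent verification in the mixed regimes: $B_p(j,k,N)$ contains contributions with $(1/p-1)_+$ and $(d-1)(1/p-1)_+$, so one must check the required decay separately in the ranges $p \geq 1$ and $p < 1$; in both cases the gap $M - A > |s| + 2$ comfortably dominates the linear combinations of $j-N$, $N-k$ (or $N-j$, $k-N$) that arise.
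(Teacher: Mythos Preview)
Your proposal is correct and follows essentially the same route as the paper: apply Proposition~\ref{localizedproposition} with $g=\La_j f$, control the maximal functions via Peetre's inequality using $A>d/p$, and absorb the extra term $2^{-|j-k|(M-A)}$ into $B_p(j,k,N)$ in the two mixed regimes before multiplying by $2^{ks}$. The paper phrases the absorption step as ``if $M\ge A+1$ then $2^{-|j-k|(M-A)}\lesssim B_p(j,k,N)$'', while you invoke the full margin $M-A>|s|+2$ and note the need to treat $p\ge 1$ and $p<1$ separately; both are adequate for the ranges of $p$ used in the paper.
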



  \section{Upper bounds  for  $p\leq 1$}\label{upbdsple1sect}
	
In the range $p\leq 1$, the constants in \eqref{Ups_def}
take the following explicit form 
  \Be \label{BjkN-ple1}
 U_{p,s}(j,k,N)
= \begin{cases}   2^{k(s-\frac 1p)}  2^{j(\frac dp-d-s)}  2^{N(d-\frac {d-1}{p})}
  &\text{ if } j,k>N 
 \\ 2^{k(s-\frac 1p)} 2^{j(1-s)} 2^{N(\frac 1p-1)}
  &\text{ if } j\le N<k 
 \\
 2^{k(s+d+1-\frac dp)}  2^{j(1-s)} 2^{N( \frac dp-d-2)}   &\text{ if } 0\le j,k\le N 
  \\2^{k(s+d+1-\frac dp) } 2^{j(\frac dp-d-s)} 2^{-N}&\text{ if } 
	\,\, k\le N <j.
 \end{cases}  \Ee
We now state four propositions corresponding to the 
four cases of \eqref{BjkN-ple1}. 
We then sketch the straightforward proofs.

\begin{proposition}\label{jk>N}
For $\frac{d-1}{d}<p\le 1$ and $r>0$,
\begin{subequations}
\begin{multline}\label{26a}
\Big(\sum_{k>N} 2^{ks r} \Big\|\sum_{j>N}L_k\bbE_N L_j\Lambda_j f\Big\|_p^r\Big)^{1/r}\\
\lc \begin{cases} \sup_{j>N} 2^{js} \|\La_j f\|_p &\text{ if } d(\frac 1p-1)<s<\frac 1p,
\\
\Big(\sum_{j>N}2^{jsp} \|\La_jf\|_p^p\Big)^{1/p}   &\text{ if } s=d(\frac 1p-1)<\frac1p.
\end{cases}
\end{multline}
For $p=1$ and $s=1$ we have
\Be
\sup_{k>N} 2^{k} \Big\|\sum_{j>N}L_k\bbE_N L_j\Lambda_j f\Big\|_1
\lc  \sup_{j>N} 2^{j} \|\La_j f\|_1 .
\Ee
\end{subequations}
The same inequalities hold  when $\bbE_N$ is replaced with 
 $T_N[\cdot,\fa]$ if $\|\fa\|_\infty\le 1$.
 \end{proposition}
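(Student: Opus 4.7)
My plan is to apply Corollary \ref{Upscor} pointwise to each term $L_k\bbE_N L_j \La_j f$ with $j>N$ and then combine the estimates using the $p$-subadditivity of $\|\cdot\|_p^p$ valid for $p\leq 1$. Writing $a_j:=2^{js}\|\La_j f\|_p$, the corollary gives $2^{ks}\|L_k\bbE_N L_j\La_j f\|_p\lesssim U_{p,s}(j,k,N)\,a_j$, and the $p$-triangle inequality then yields
\[
2^{ksp}\Big\|\sum_{j>N}L_k\bbE_N L_j\La_j f\Big\|_p^p\,\lesssim\,\sum_{j>N}U_{p,s}(j,k,N)^p\,a_j^p.
\]

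I would next insert the explicit form of $U_{p,s}(j,k,N)$ from the first line of \eqref{BjkN-ple1}, factoring out the $k$- and $N$-dependent terms to reduce the right-hand side to $2^{k(sp-1)}2^{N(dp-d+1)}\sum_{j>N}2^{j(d-dp-sp)}a_j^p$. When $s>d(\frac 1p-1)$ the exponent in $j$ is strictly negative, so a geometric-series estimate bounds the $j$-sum by $2^{N(d-dp-sp)}\sup_{j>N}a_j^p$; combining factors (note $N(dp-d+1)+N(d-dp-sp)=N(1-sp)$) and taking a $p$-th root gives
\[
2^{ks}\Big\|\sum_{j>N}L_k\bbE_N L_j\La_j f\Big\|_p\,\lesssim\, 2^{-(k-N)(\frac 1p-s)}\,\sup_{j>N}a_j.
\]
The assumption $s<\frac 1p$ then makes the series $\sum_{k>N} 2^{-(k-N)(\frac 1p-s)r}$ convergent for every $r>0$, producing the first alternative of \eqref{26a}. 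At the endpoint $s=d(\frac 1p-1)$ the exponent in $j$ vanishes, so the geometric step in $j$ is replaced by the bare $\sum_{j>N}a_j^p$; since $s<\frac 1p$ the $k$-sum still converges, and one obtains the $\ell^p$-flavoured second alternative of \eqref{26a}.

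The case $p=1$, $s=1$ is borderline because $s=\frac 1p$ breaks the $k$-summation. However a direct inspection of \eqref{BjkN-ple1} shows that $U_{1,1}(j,k,N)=2^{N-j}$ is independent of $k$, so the $\ell^1$ triangle inequality combined with $\sum_{j>N}2^{N-j}\lesssim 1$ gives $2^k\|\sum_{j>N}L_k\bbE_N L_j\La_j f\|_1\lesssim \sup_{j>N}a_j$ uniformly in $k>N$, and taking $\sup_k$ yields the desired bound. Finally, the claim for $T_N[\cdot,\fa]$ with $\|\fa\|_\infty\leq 1$ requires no change, since part (iii) of Corollary \ref{Upscor} asserts identical pointwise bounds. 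The main obstacle is essentially careful bookkeeping of the exponents in \eqref{BjkN-ple1}; the hypothesis $p>\frac{d-1}{d}$ is precisely what makes the interval $(d(\frac 1p-1),\frac 1p)$ nonempty, and no further analytic subtlety enters.
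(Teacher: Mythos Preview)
Your proof is correct and follows essentially the same approach as the paper's own argument: apply the $p$-triangle inequality, invoke Corollary~\ref{Upscor} with the explicit form of $U_{p,s}(j,k,N)$ from \eqref{BjkN-ple1}, sum the resulting geometric series first in $j$ and then in $k$, and treat the endpoints $s=d(\tfrac1p-1)$ and $p=s=1$ separately. One minor quibble: Corollary~\ref{Upscor} has no labeled ``part (iii)''; the statement for $T_N[\cdot,\fa]$ is simply its last sentence (or, equivalently, part (iii) of Proposition~\ref{localizedproposition}).
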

 
 \begin{proposition}\label{jleNk>N}
For $0<p<1$ and $r>0$,
\begin{subequations}
\begin{multline}\label{jleNk>Nineq}
\Big(\sum_{k>N} 2^{ks r} \Big\|\sum_{j\le N} L_k\bbE_N ^\perp L_j\Lambda_j f\Big\|_p^r\Big)^{1/r}\\
\lc \begin{cases} \sup_{j\le N} 2^{js} \|\La_j f\|_p &\text{ if } s<1,
\\
\Big(\sum_{j=0}^N 2^{jsp} \|\La_jf\|_p^p\Big)^{1/p}   &\text{ if } s=1
\\
 2^{(s-1)N}\Ds\sup_{j\le N} 2^{js} \|\La_j f\|_p   & \text{ if } 1<s<1/p.
\end{cases}
\end{multline}
Inequality \eqref{jleNk>Nineq} also holds for $p=1$ and $s<1$.
When $p=s=1$ we have
\Be \label{jleNk>Nineqpartb} 
\sup_{k>N} 2^{k} \Big\|\sum_{j\le N} L_k\bbE_N ^\perp L_j\Lambda_j f\Big\|_1 
\lc \sum_{j=0}^N 2^{j} \|\La_jf\|_1
\Ee
\end{subequations} 
The same statements hold  with $\bbE_N^\perp$ replaced by
 $T_N[\cdot,\fa]$ if $\|\fa\|_\infty\le 1$.
 \end{proposition}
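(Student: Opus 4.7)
The strategy is to apply Corollary~\ref{Upscor} pointwise in $(j,k)$, then sum first in $j$ for fixed $k$ using the $p$-quasi-triangle inequality, and finally in $k$ using the geometric decay coming from the factor $2^{k(s-1/p)}$ in $U_{p,s}(j,k,N)$ for $j\le N<k$.

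First I would substitute the explicit form $U_{p,s}(j,k,N)=2^{k(s-1/p)}2^{j(1-s)}2^{N(1/p-1)}$ from \eqref{BjkN-ple1} into Corollary~\ref{Upscor} to obtain the basic estimate
\[
2^{ks}\|L_k\bbE_N^\perp L_j\La_j f\|_p\,\lesssim\, 2^{k(s-1/p)}\,2^{j}\,2^{N(1/p-1)}\,\|\La_j f\|_p.
\]
For $0<p\le 1$, the $p$-subadditivity of $\|\cdot\|_p^p$ then yields
\[
2^{ks}\Big\|\sum_{j\le N}L_k\bbE_N^\perp L_j\La_j f\Big\|_p\,\lesssim\, 2^{k(s-1/p)}\,2^{N(1/p-1)}\Big(\sum_{j\le N}2^{jp}\|\La_j f\|_p^p\Big)^{1/p}.
\]

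Next I would split $2^{jp}=2^{jp(1-s)}\cdot 2^{jsp}$ and estimate the $j$-sum in three cases. If $s<1$, then $\sum_{j\le N}2^{jp(1-s)}\lesssim 2^{Np(1-s)}$, producing a factor $2^{N(1-s)}\sup_{j\le N}2^{js}\|\La_j f\|_p$; if $s=1$, the exponent $jp(1-s)$ vanishes and the $j$-sum is already of the form $\bigl(\sum_{j\le N} 2^{jsp}\|\La_j f\|_p^p\bigr)^{1/p}$; if $1<s<1/p$, then $\sum_{j\le N}2^{jp(1-s)}$ is uniformly bounded and one extracts $\sup_{j\le N}2^{js}\|\La_j f\|_p$ with a constant factor. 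Combining these with $2^{N(1/p-1)}$ and the remaining $k$-factor $2^{k(s-1/p)}$, I then take the $\ell^r$-norm over $k>N$. Since $s<1/p$ holds in every case listed (using $p\le 1$, so $1/p\ge 1>s$ in the first two subcases, and $s<1/p$ is explicit in the third), the geometric sum $\bigl(\sum_{k>N}2^{(k-N)(s-1/p)r}\bigr)^{1/r}\lesssim 1$ is finite. Combining the $j$-factor $2^{N(1-s)}$ or similar with $2^{N(1/p-1)}$ and $2^{N(s-1/p)}$ from the $k$-sum produces exactly the three bounds in \eqref{jleNk>Nineq}.

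For $p=1$ with $s<1$, the same chain of inequalities works with the ordinary triangle inequality in $L^1$ replacing $p$-subadditivity, and $s<1$ still ensures $k$-convergence. At the endpoint $p=s=1$, the product $U_{1,1}(j,k,N)\cdot 2^{js}$ collapses to $2^{j}$, independent of $k$, so triangle inequality in $L^1$ gives
\[
2^k\Big\|\sum_{j\le N}L_k\bbE_N^\perp L_j\La_jf\Big\|_1\,\lesssim\,\sum_{j=0}^N 2^j\|\La_jf\|_1
\]
uniformly in $k>N$; taking $\sup_{k>N}$ then yields \eqref{jleNk>Nineqpartb}. The analogous statement for $T_N[\cdot,\fa]$ follows verbatim, since Corollary~\ref{Upscor} holds with $\bbE_N^\perp$ replaced by $T_N[\cdot,\fa]$ when $\|\fa\|_\infty\le 1$. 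Overall the argument is a direct application of Corollary~\ref{Upscor}; the only point requiring care is the bookkeeping of the exponents in $N$ across the three regimes of $s$, which I expect to be the main (though routine) task.
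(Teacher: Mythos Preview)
Your proposal is correct and follows essentially the same approach as the paper: apply Corollary~\ref{Upscor}, use the $p$-triangle inequality to sum in $j$, then sum the geometric series in $k$ using $s<1/p$. The paper's version is slightly more compressed (it writes the combined expression $\big(\sum_{j\le N} 2^{(j-N)(1-s)p}[2^{js}\|\La_jf\|_p]^p\big)^{1/p}$ once and reads off all three cases from it), but your more explicit case-by-case bookkeeping of the $N$-exponents is equivalent.
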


 \begin{proposition}\label{jleNkleN}
For $\frac d{d+2}<p\le 1$ and $r>0$,
\begin{multline}\label{jleNkleNineq}
\Big(\sum_{k\le N} 2^{ks r} \Big\|\sum_{j\le N} L_k\bbE_N^\perp L_j\Lambda_j f\Big\|_p^r\Big)^{1/r}\\
\lc \begin{cases} \sup_{j\le N} 2^{js} \|\La_j f\|_p &\text{ if } \frac dp-d-1<s<1,
\\
\Big(\sum_{j=0}^N 2^{jsp} \|\La_jf\|_p^p\Big)^{1/p}   &\text{ if } s=1
\\
 2^{(s-1)N}\sup_{j\le N} 2^{js} \|\La_j f\|_p  & \text{ if } 1<s<1/p.
\end{cases}
\end{multline}
The same inequality holds  with $\bbE_N^\perp$ replaced by 
 $T_N[\cdot,\fa]$ if $\|\fa\|_\infty\le 1$.
 \end{proposition}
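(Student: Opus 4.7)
The plan is to follow the same template used (implicitly) for Propositions \ref{jk>N} and \ref{jleNk>N}: apply Corollary \ref{Upscor}, use the $p$-triangle inequality to exchange the sum in $j$ with the $L^p$ norm, and then analyze the resulting double geometric sum in $j$ and $k$. Set $\alpha:=s+d+1-\tfrac{d}{p}$, so that the third line of \eqref{BjkN-ple1} reads $U_{p,s}(j,k,N)=2^{k\alpha}\,2^{j(1-s)}\,2^{N(\frac{d}{p}-d-2)}$. Note that the assumption $p>d/(d+2)$ together with the three ranges $s\in(\tfrac{d}{p}-d-1,1)$, $s=1$, $s\in(1,\tfrac{1}{p})$ guarantees $\alpha>0$ in all cases.

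By \eqref{Upscorrsecond} applied to each pair $0\le j,k\le N$ and the $p$-triangle inequality (valid since $p\le 1$),
\[
\Big(2^{ks}\Big\|\sum_{j\le N}L_k\bbE_N^\perp L_j\La_j f\Big\|_p\Big)^{\!p}
\lc 2^{kp\alpha}\,2^{Np(\frac{d}{p}-d-2)}\sum_{j\le N}2^{jp(1-s)}\bigl(2^{js}\|\La_j f\|_p\bigr)^{p}.
\]
Write $a_j:=2^{js}\|\La_j f\|_p$ and $A:=\sup_{j\le N}a_j$. In the range $\tfrac{d}{p}-d-1<s<1$, the exponent $1-s$ is positive, so the $j$-sum is controlled by its last term, $\lc 2^{Np(1-s)}A^p$; combining with $2^{Np(\frac{d}{p}-d-2)}=2^{-Np\alpha-Np(1-s)}\cdot 2^{Np(1-s)}$ gives $2^{ks}\|\sum_j\cdots\|_p\lc 2^{-(N-k)\alpha}A$. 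For $s=1$ the $j$-factor disappears and one obtains $2^{ks}\|\sum_j\cdots\|_p\lc 2^{-(N-k)\alpha}\bigl(\sum_{j\le N}a_j^p\bigr)^{1/p}$. For $1<s<1/p$ the factor $2^{jp(1-s)}$ yields a summable geometric series in $j$, whence $\sum_{j\le N}2^{jp(1-s)}a_j^p\lc A^p$ and $2^{ks}\|\sum_j\cdots\|_p\lc 2^{k\alpha}2^{N(\frac{d}{p}-d-2)}A$.

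Finally, taking $\ell^r$ over $k\le N$: in the first two cases the factor $2^{-(N-k)\alpha}$ produces a convergent geometric series in $k$ (using $\alpha>0$), yielding the stated bounds by $A$ and by $\bigl(\sum_j a_j^p\bigr)^{1/p}$ respectively; in the third case the sum $\sum_{k\le N}2^{k\alpha r}$ is dominated by $2^{N\alpha r}$, and multiplying by $2^{Nr(\frac{d}{p}-d-2)}$ gives the factor $2^{N(s-1)}$ as required since $\alpha+\frac{d}{p}-d-2=s-1$. The extension to the operator family $T_N[\cdot,\fa]$ with $\|\fa\|_\infty\le 1$ is automatic from part (iii) of Proposition \ref{localizedproposition} / Corollary \ref{Upscor}. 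The only delicate point is keeping track of the exact exponent identities that force $\alpha>0$; this is where the hypothesis $p>d/(d+2)$ is needed (precisely to ensure convergence of the geometric sum in $k$ at the endpoint $s=1$), and is the sole arithmetic obstacle in an otherwise mechanical argument.
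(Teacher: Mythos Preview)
Your proof is correct and essentially identical to the paper's: both apply Corollary \ref{Upscor} together with the $p$-triangle inequality, separate the resulting bound into a $k$-sum and a $j$-sum, and evaluate each as a geometric series using $\alpha=s+d+1-\tfrac{d}{p}>0$. One small slip: in the case $\tfrac{d}{p}-d-1<s<1$ the displayed identity should read $2^{Np(\frac{d}{p}-d-2)}=2^{-Np\alpha-Np(1-s)}$ (not multiplied by $2^{Np(1-s)}$), but your subsequent conclusion $2^{ks}\|\sum_j\cdots\|_p\lc 2^{-(N-k)\alpha}A$ is correct once this is combined with the $j$-sum bound $2^{Np(1-s)}A^p$.
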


 \begin{proposition}\label{j>NkleN}
For $0<p\le 1$ and $r>0$,
\begin{multline}\label{j>NkleNineq}
\Big(\sum_{k\le N} 2^{ks r} \Big\|\sum_{j>N}L_k \bbE_N L_j\Lambda_j f\Big\|_p^r\Big)^{1/r}\\
\lc \begin{cases} \sup_{j>N} 2^{js} \|\La_j f\|_p &\text{ if } s>\frac dp-d, \\
\Big(\sum_{j>N} 2^{jsp} \|\La_jf\|_p^p\Big)^{1/p}   &\text{ if } s=\frac dp-d .
\end{cases}
\end{multline}
The same inequality holds with $\bbE_N$ replaced by 
 $T_N[\cdot,\fa]$ if $\|\fa\|_\infty\le 1$.
 \end{proposition}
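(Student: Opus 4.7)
The plan is to invoke Corollary \ref{Upscor} in the regime $k\le N<j$, where the table \eqref{BjkN-ple1} gives the explicit value
\[
 U_{p,s}(j,k,N)=2^{k(s+d+1-d/p)}\,2^{j(d/p-d-s)}\,2^{-N},
\]
so that \eqref{Upscorrfirst} yields the pointwise-index estimate
\[
 2^{ks}\|L_k\bbE_N L_j\La_j f\|_p\lc 2^{k(s+d+1-d/p)}\,2^{j(d/p-d-s)}\,2^{-N}\,2^{js}\|\La_j f\|_p.
\]
Since $p\le 1$, the $p$-triangle inequality commutes the $L^p$-norm with the sum over $j$:
\[
 \Big\|\sum_{j>N}L_k\bbE_N L_j\La_j f\Big\|_p\le\Big(\sum_{j>N}\|L_k\bbE_N L_j\La_j f\|_p^p\Big)^{1/p},
\]
and combining both estimates I obtain
\[
 2^{ks}\Big\|\sum_{j>N}L_k\bbE_N L_j\La_j f\Big\|_p\lc 2^{k(s+d+1-d/p)}\,2^{-N}\Big(\sum_{j>N}2^{jp(d/p-d-s)}\,(2^{js}\|\La_j f\|_p)^p\Big)^{1/p}.
\]

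From here I split into two cases. If $s>d/p-d$, then $d/p-d-s<0$, so the series over $j>N$ is a convergent geometric sum dominated by a constant times $2^{Np(d/p-d-s)}\,\sup_{j>N}(2^{js}\|\La_j f\|_p)^p$; after extracting the supremum and collecting powers of $2^N$, the bound reduces to
\[
 2^{ks}\Big\|\sum_{j>N}L_k\bbE_N L_j\La_j f\Big\|_p\lc 2^{-(N-k)(s+d+1-d/p)}\,\sup_{j>N}2^{js}\|\La_j f\|_p,
\]
and since $s+d+1-d/p>1$ in this range, raising to the $r$-th power and summing over $k\le N$ gives a convergent geometric series in $N-k$, producing the first line of \eqref{j>NkleNineq}. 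At the endpoint $s=d/p-d$ the inner exponent vanishes and $s+d+1-d/p=1$, so the estimate becomes
\[
 2^{ks}\Big\|\sum_{j>N}L_k\bbE_N L_j\La_j f\Big\|_p\lc 2^{k}\,2^{-N}\,X,\qquad X:=\Big(\sum_{j>N}(2^{js}\|\La_j f\|_p)^p\Big)^{1/p};
\]
raising to the $r$-th power and noting that $\sum_{k\le N}(2^{k-N})^r\approx 1$ (a convergent geometric series with ratio $2^r>1$ truncated at its maximum) yields the second line of \eqref{j>NkleNineq}.

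The analogous statements for $T_N[\cdot,\fa]$ require no extra work, since Corollary \ref{Upscor} provides an identical estimate for $T_N$ uniformly over $\|\fa\|_\infty\le 1$ and the rest of the argument is purely algebraic. There is no genuine obstacle here; the one structural point worth verifying is that $s+d+1-d/p\ge 1$ throughout the stated regime $s\ge d/p-d$, which guarantees that both the inner geometric sum over $j>N$ and the outer geometric sum over $k\le N$ behave as expected.
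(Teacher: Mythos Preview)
Your proof is correct and follows essentially the same approach as the paper: invoke Corollary~\ref{Upscor} in the regime $k\le N<j$ with the explicit value of $U_{p,s}$ from \eqref{BjkN-ple1}, apply the $p$-triangle inequality, and then exploit the factorized form $U_{p,s}(j,k,N)=2^{k(s+d+1-d/p)}2^{-N}\cdot 2^{j(d/p-d-s)}$ to separate the $k$- and $j$-sums into geometric series. The paper sums in $k$ first and then analyzes the $j$-sum, while you do the opposite, but this is immaterial.
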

 
 \subsection*{\it Proofs} The proofs of the four propositions involve Corollary \ref{Upscor} and an application of the $p$-triangle inequality for $p\le 1$.

 \begin{proof}[Proof of Proposition \ref{jk>N}]
First observe that the range of $s$ in \eqref{26a} is nontrivial if and only if $p>d/(d-1)$.
Let $\Sigma_r$ denote the left hand side of \eqref{26a}. Then the $p$-triangle inequality and Corollary \ref{Upscor} give
\Beas
\Sigma_r&\leq &\Big(\sum_{k>N}2^{ksr}\Big[\sum_{j>N}\|L_k\bbE_N L_j\Lambda_j f\|_p^p\Big]^\frac rp\Big)^{1/r}\\
& \lesssim &
\Big(\sum_{k>N}\Big[\sum_{j>N}U_{p,s}(j,k,N)^p\;2^{jsp}\|\La_j f\|_p^p\Big]^\frac rp\Big)^{1/r}.
\Eeas
When $d(\frac1p-1)<s<\frac1p$ this implies
\[
\Sigma_r\lesssim \Big(\sum_{k>N}\big[\sum_{j>N}
2^{k(s-\frac 1p)p}  2^{j(\frac dp-d-s)p}  2^{N(d-\frac {d-1}{p})p}\big]^\frac rp\Big)^{1/r}\,\sup_{\ell> N} 2^{\ell s}\|\La_\ell f\|_p,
\]
which gives the asserted expression because the series above 
is bounded (uniformly in $N$).
At the endpoint $s=d(\frac1p-1)<\frac1p$ we have 
\[
\Sigma_r\lesssim \Big(\sum_{k>N}2^{(k-N)(s-\frac1p)r}\Big)^{1/r}\,\Big(\sum_{j> N} 2^{j sp}\|\La_j f\|_p^p\Big)^\frac1p,
\]
which also leads to the asserted expression in \eqref{26a}. Finally, if $s=p=1$, using that $U_{p,s}(j,k,N)=2^{N-j}$ we obtain
\[
\Sigma_\infty\lesssim \sum_{j>N}2^{N-j}\,2^j\|\La_j f\|_1\leq \sup_{\ell> N} 2^{\ell }\|\La_\ell f\|_1.\qedhere
\]\end{proof}

\begin{proof}[Proof of Proposition \ref{jleNk>N}]
The left  hand side of \eqref{jleNk>Nineq} 
is controlled by
\begin{align*} 
&\Big(\sum_{k>N}\Big[\sum_{j\le N}U_{p,s}(j,k,N)^p\;2^{jsp}\|\La_j f\|_p^p\Big]^{r/p}\Big)^{1/r}
\\
&\lc\Big(\sum_{k\ge N} 2^{k(s-\frac 1p)r}2^{N(\frac 1p-1)r}\Big)^{\frac 1r}
\Big(\sum_{j\le N} 2^{j(1-s)p} [2^{js}\|\La_j f\|_p]^p
\Big)^{\frac 1p}.
\end{align*}
If $s<1/p$ the first sum can be evaluated as 
$C_2(p,s,r) 2^{N(s-1)}$ and the above expression is dominated by a constant times
\[\Big(\sum_{j\le N} 2^{(j-N)(1-s)p} [2^{js}\|\La_j f\|_p]^p\Big)^{\frac 1p}.
\] \eqref{jleNk>Nineq} follows immediately. The proof of 
\eqref{jleNk>Nineqpartb} 
is similar.
\end{proof}
  
 \begin{proof}[Proof of Proposition \ref{jleNkleN}]
The left  hand side of \eqref{jleNkleNineq} 
is controlled by
\begin{align*} 
&\Big(\sum_{k\le N}\Big[\sum_{j\le N}U_{p,s}(j,k,N)^p\;2^{jsp}\|\La_j f\|_p^p\Big]^{r/p}\Big)^{1/r}
\\
&\lc\Big(\sum_{k\le N} 2^{k(s+d+1-\frac dp)r}2^{N(\frac dp-d-2)r}\Big)^{\frac 1r}
\Big(\sum_{j\le N} 2^{j(1-s)p} [2^{js}\|\La_j f\|_p]^p
\Big)^{\frac 1p}.
\end{align*}
If $s>\frac dp-d-1$
 the first factor can be evaluated to be 
$C_3(p,s,r)2^{N(s-1)}$
and the above expression is again dominated by a constant times
\[\Big(\sum_{j\le N} 2^{(j-N)(1-s)p} [2^{js}\|\La_j f\|_p]^p\Big)^{\frac 1p}.
\] 
Note that for the $s$-range in 
\eqref{jleNkleNineq} to be nontrivial we want 
 $\frac dp-d-1<1$, i.e. $p>\frac{d}{d+2}$. Now  \eqref{jleNkleNineq} follows easily.
\end{proof}

 \begin{proof}[Proof of Proposition \ref{j>NkleN}]
The left  hand side of \eqref{j>NkleNineq} 
is controlled by
\begin{align*} 
&\Big(\sum_{k\le N}\Big[\sum_{j> N}U_{p,s}(j,k,N)^p\;2^{jsp}\|\La_j f\|_p^p\Big]^{r/p}\Big)^{1/r}
\\
&\lc\Big(\sum_{k\le N} 2^{k(s+d+1-\frac dp)r}2^{-Nr}\Big)^{\frac 1r}
\Big(\sum_{j> N} 2^{j(\frac dp -d-s)p} 
[2^{js}\|\La_j f\|_p]^p
\Big)^{\frac 1p}.
\end{align*}
In the range $s\ge \frac dp-d$ under consideration
 the first factor can be evaluated to be $C_4(p,s,r)2^{N(s+d-\frac dp)}$
and the above expression is dominated by a constant times
\[\Big(\sum_{j> N}  2^{-(j-N)(s-\frac dp +d)p}  [2^{js}\|\La_j f\|_p]^p\Big)^{\frac 1p}.
\] 
This yields \eqref{j>NkleNineq}.
\end{proof}

\begin{remark}
\label{R_ENp<1}
 The proofs of Propositions \ref{jk>N} and \ref{j>NkleN} show that each operator $\SE_N$ admits an extension to $B^s_{p,q}(\SR^d)$ in
the ranges of indices (iv), (v) and (vi) of Theorem \ref{basic-sequence-besov}, namely
\Be\label{extEN}
\SE_N(f):=\sum_{j=0}^\infty\SE_N[L_j\La_j f], \quad \mbox{in }\;B^s_{p,q}.
\Ee
Indeed, for all $r>0$ and for $J_2>J_1>N$ one has, in cases (iv) and (v),
\[
\big\|\SE_N(\sum_{j=J_1}^{J_2}L_j\La_j f)\big\|_{B^s_{p,r}}\lesssim_N 2^{-J_1\e}\|f\|_{B^s_{p,\infty}},
\] 
with $\e=s-d(1/p-1)>0$,  and in case (vi)
\[
\big\|\SE_N(\sum_{j=J_1}^{J_2}L_j\La_j f)\big\|_{B^s_{p,r}}\lesssim_N \,\Big(\sum_{j=J_1}^{J_2}2^{jsp}\|\La_j f\big\|_p^p\Big)^\frac1p.
\]
\end{remark}

  \begin{proof}[Proof of Theorem \ref{basic-sequence-besov}: Sufficiency for $\frac{d}{d+1}\le p\le 1$] 
	In view of \eqref{PiN}, \eqref{SENPiN} and trivial embeddings of Besov spaces, the uniform boundedness of $\SE_N$ follows immediately from the above four propositions.
  \end{proof}

\section{Upper bounds for  $1\leq p\leq \infty$}\label{upbdspge1sect}
When $p\geq1$  the constants in \eqref{Ups_def}
take the form   
   \Be \label{BjkN-pge1}
   U_{p,s}(j,k,N)
= \begin{cases} 
 2^{k(s-\frac 1p)} 
 2^{j(\frac 1p-1-s)} 2^N
  &\text{ if } j,k>N,
 \\
 2^{k(s-\frac 1p)}  2^{j(1-s)} 2^{N(\frac 1p-1)}
 &\text{ if } j\le N<k 
 \\
 2^{k(1+s)} 2^{j(1-s)} 2^{-2N}&\text{ if } 0\le j,k\le N,
  
 
 \\
 2^{k(1+s)} 2^{j(\frac 1p-1-s)} 2^{-\frac Np}&\text{ if } k\le N<j.
 \end{cases} 
 \Ee
Again we state four propositions corresponding to the four cases of \eqref{BjkN-pge1}.

\begin{proposition}\label{jk>Np>1}
Suppose  $1\le p\le \infty$. Then
\begin{subequations}
\Be\label{jk>Np>1ineqa}
\sup_{k>N} 2^{ks}\Big\|\sum_{j>N} L_k\bbE_N L_j\La_jf\Big\|_p \lc \sup_{j>N} 2^{js} \|\Lambda_j f\|_p,
\;\;\text{ if } \tfrac 1p-1<s\le \tfrac 1p.
\Ee
Moreover, for all $r>0$
\begin{multline}\label{jk>Np>1ineqb}
\Big(\sum_{k>N} 2^{ks r} \Big\|\sum_{j>N}L_k\bbE_N L_j\Lambda_j f\Big\|_p^r\Big)^{1/r}\\
\lc \begin{cases} \sup_{j>N} 2^{js} \|\La_j f\|_p &\text{ if } \frac 1p-1<s<\frac 1p,
\\
\sum_{j>N} 2^{js} \|\La_jf\|_p   &\text{ if } s=\frac 1p-1.
\end{cases}
\end{multline}
\end{subequations}
The same inequalities hold with $\bbE_N$ replaced by 
 $T_N[\cdot,\fa]$ if $\|\fa\|_\infty\le 1$.
 \end{proposition}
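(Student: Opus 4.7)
The proof will follow the same template as Propositions \ref{jk>N}--\ref{j>NkleN}, with one crucial simplification: since $p\geq 1$, the ordinary $L^p$ triangle inequality takes the place of the $p$-triangle inequality that was essential in the $p\leq 1$ cases. The starting input is Corollary \ref{Upscor}, which for $j,k>N$ and $p\geq 1$ yields
\[
2^{ks}\|L_k\SE_N L_j\La_j f\|_p\,\lesssim\, U_{p,s}(j,k,N)\cdot 2^{js}\|\La_j f\|_p,
\]
with $U_{p,s}(j,k,N)=2^{k(s-1/p)}\,2^{j(1/p-1-s)}\,2^N$ from \eqref{BjkN-pge1}.

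The algebraic key is to rewrite this coefficient in the decoupled form
\[
U_{p,s}(j,k,N)\,=\,2^{(k-N)(s-1/p)}\cdot 2^{-(j-N)(1+s-1/p)}.
\]
Combining the $L^p$-triangle inequality with the above pointwise bound then gives, for each $k>N$,
\[
2^{ks}\Big\|\sum_{j>N}L_k\SE_N L_j\La_j f\Big\|_p \,\lesssim\, 2^{(k-N)(s-1/p)}\sum_{j>N}2^{-(j-N)(1+s-1/p)}\bigl[2^{js}\|\La_j f\|_p\bigr].
\]

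When $\frac 1p-1<s<\frac 1p$, the exponent $1+s-\frac 1p$ is strictly positive, so the $j$-series is a convergent geometric sum controlled by $\sup_{j>N}2^{js}\|\La_j f\|_p$; the residual factor $2^{(k-N)(s-1/p)}$ is uniformly bounded in $k>N$ provided $s\leq \frac 1p$, and is moreover $\ell^r$-summable for every $r>0$ whenever $s<\frac 1p$. This delivers \eqref{jk>Np>1ineqa} on the open interval together with the first case of \eqref{jk>Np>1ineqb}; the endpoint $s=\frac 1p$ (only asserted in \eqref{jk>Np>1ineqa}) is handled identically since then $2^{(k-N)(s-1/p)}=1$. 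At the opposite endpoint $s=\frac 1p-1$ the exponent $1+s-\frac 1p$ vanishes, so $U_{p,s}(j,k,N)=2^{N-k}$ is independent of $j$; one then pulls the $j$-sum outside, and the observation $\bigl(\sum_{k>N}2^{(N-k)r}\bigr)^{1/r}\lesssim 1$ for every $r>0$ yields the second case of \eqref{jk>Np>1ineqb}. The variant for $T_N[\cdot,\fa]$ with $\|\fa\|_\infty\leq 1$ is immediate from clause (iii) of Corollary \ref{Upscor}. No serious obstacle arises in this range (the case $p=\infty$ is covered throughout by interpreting $1/p$ as $0$); the only delicate point is the exponent bookkeeping, arranged so that each hypothesis on $s$ is consumed by exactly one of the three summations --- the geometric series in $j$, the uniform bound in $k$, or the $\ell^r$-summability in $k$.
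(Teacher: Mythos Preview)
Your proof is correct and follows essentially the same approach as the paper: both apply the $L^p$ triangle inequality together with Corollary \ref{Upscor}, then exploit the factored form of $U_{p,s}(j,k,N)$ to sum a geometric series in $j$ (using $s>\tfrac1p-1$) and handle the $k$-sum or $k$-sup (using $s\le\tfrac1p$ or $s<\tfrac1p$). Your rewriting $U_{p,s}(j,k,N)=2^{(k-N)(s-1/p)}2^{-(j-N)(1+s-1/p)}$ makes the bookkeeping slightly more transparent than the paper's version, but the argument is the same; one small remark is that the $T_N$ variant is stated in the last sentence of Corollary \ref{Upscor} rather than as a numbered clause ``(iii)''.
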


\begin{proposition}\label{jleNk>Np>1}
Suppose  $1\le p\le \infty$. Then
 for all $r>0$ 
\begin{subequations}
\Be \label{jleNk>Np>1ineqa}
\Big(\sum_{k>N} 2^{ks r} \Big\|\sum_{j\le N}L_k\bbE_N^\perp L_j\Lambda_j f\Big\|_p^r\Big)^{1/r}
\lc \sup_{j\le N}  2^{js} \|\La_j f\|_p ,\quad\text{if }\; s<\tfrac1p.\Ee
Moreover, if $s=\frac1p<1$ then
\Be\label{jleNk>Np>1ineqb}
\sup_{k>N} 2^{ks}\Big\|\sum_{j\le N} L_k\bbE_N^\perp L_j\La_jf\Big\|_p \lc \sup_{j\le N} 2^{js} \|\Lambda_j f\|_p,
\Ee
and if $s=p=1$ then
\Be\label{jleNk>Np>1ineqc}
\sup_{k>N} 2^{k}\Big\|\sum_{j\le N} L_k\bbE_N^\perp L_j\La_jf\Big\|_1 \lc \sum_{j\le N} 2^{j} \|\Lambda_j f\|_1.
\Ee
\end{subequations}
The same inequalities hold with $\bbE^\perp_N$ replaced by 
 $T_N[\cdot,\fa]$ if $\|\fa\|_\infty\le 1$.
 \end{proposition}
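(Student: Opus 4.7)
The plan is to proceed in close analogy with the proof of Proposition \ref{jleNk>N}, but exploiting that for $p\geq 1$ the ordinary triangle inequality in $L^p$ (rather than the $p$-triangle inequality) is available. The main input is Corollary \ref{Upscor}, which for $j\leq N$ gives
\[
2^{ks}\bigl\|L_k \bbE_N^\perp L_j \Lambda_j f\bigr\|_p
\;\lesssim\; U_{p,s}(j,k,N)\,\cdot\, 2^{js}\|\Lambda_j f\|_p,
\]
together with the explicit form of $U_{p,s}$ in the regime $j\leq N<k$ from \eqref{BjkN-pge1}, namely $U_{p,s}(j,k,N) = 2^{k(s-1/p)}\,2^{j(1-s)}\,2^{N(1/p-1)}$. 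The same bound holds, by part (iii) of Corollary \ref{Upscor}, when $\bbE_N^\perp$ is replaced by $T_N[\cdot,\fa]$ with $\|\fa\|_\infty\leq 1$, so the argument below applies to both operators identically.

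First I would apply the triangle inequality inside $L^p$ and the pointwise Corollary bound to obtain
\[
2^{ks}\Bigl\|\sum_{j\leq N} L_k \bbE_N^\perp L_j \Lambda_j f\Bigr\|_p
\;\lesssim\; 2^{k(s-\frac{1}{p})}\, 2^{N(\frac{1}{p}-1)} \sum_{j=0}^{N} 2^{j(1-s)}\,\bigl[2^{js}\|\Lambda_j f\|_p\bigr].
\]
For parts (a) and (b), where $s<1/p\leq 1$, one has $1-s>0$, so the geometric sum over $j\leq N$ is bounded by a constant times $2^{N(1-s)}\,\sup_{j\leq N} 2^{js}\|\Lambda_j f\|_p$. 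Substituting and collecting exponents yields
\[
2^{ks}\Bigl\|\sum_{j\leq N} L_k \bbE_N^\perp L_j \Lambda_j f\Bigr\|_p
\;\lesssim\; 2^{(k-N)(s-\frac{1}{p})} \,\sup_{j\leq N} 2^{js}\|\Lambda_j f\|_p.
\]
For part (a), $s-\frac{1}{p}<0$, so taking the $\ell^r(k>N)$-norm produces a convergent geometric series whose sum is bounded independently of $N$, proving \eqref{jleNk>Np>1ineqa}. For part (b) the exponent $s-\frac{1}{p}$ vanishes, so the right-hand side is already uniform in $k$, proving \eqref{jleNk>Np>1ineqb}.

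For part (c), where $s=p=1$, each of the three exponents $s-\frac{1}{p}$, $1-s$, and $\frac{1}{p}-1$ equals $0$, so $U_{1,1}(j,k,N)\equiv 1$. The triangle-inequality estimate collapses to
\[
\sup_{k>N} 2^{k}\Bigl\|\sum_{j\leq N} L_k \bbE_N^\perp L_j \Lambda_j f\Bigr\|_1
\;\lesssim\; \sum_{j=0}^{N} 2^{j}\|\Lambda_j f\|_1,
\]
which is \eqref{jleNk>Np>1ineqc}; note that the appearance of a full sum (instead of a supremum) is precisely due to the failure of geometric decay in $j$ when $s=1$. There is no real obstacle here: all three cases reduce to bookkeeping of geometric series, and the mild points to watch are that the constant implicit in the sum $\sum_{j\leq N} 2^{j(1-s)}\lesssim 2^{N(1-s)}$ stays bounded as $s\to 1^-$ in (a) (which is clear from the explicit ratio), and that (c) must be handled separately because the geometric factor degenerates at $s=1$.
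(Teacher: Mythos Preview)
Your proof is correct and follows essentially the same route as the paper: apply the triangle inequality in $L^p$, invoke Corollary~\ref{Upscor} with the explicit form of $U_{p,s}(j,k,N)$ from \eqref{BjkN-pge1}, and then sum two geometric series. The paper sums first in $k$ and then in $j$, while you do the opposite, but this is an immaterial reordering.

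One small correction to your closing remark: the implicit constant in $\sum_{j\le N} 2^{j(1-s)}\lesssim 2^{N(1-s)}$ does \emph{not} stay bounded as $s\to 1^-$ (it behaves like $(1-s)^{-1}$). This is harmless for the proposition, since $s$ is fixed, but your parenthetical claim is inaccurate.
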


 \begin{proposition}\label{jleNkleNp>1}
Let  $1\le p\le\infty$ and $r>0$. Then
\begin{subequations}
\Be \label{jleNkleNp>1ineqa}
\Big(\sum_{k\le N} 2^{ks r} \Big\|\sum_{j\le N} L_k\bbE_N^\perp L_j\Lambda_j f\Big\|_p^r\Big)^{1/r}\\
\lc  \sup_{j\le N} 2^{js} \|\La_j f\|_p \, \text{ if } -1<s<1.
\Ee
Moreover, for the case $s=-1$ we have  
\Be \label{jleNkleNp>1ineqb}\sup_{k\le N} 2^{-k} \Big\|\sum_{j\le N} L_k\bbE_N^\perp L_j\Lambda_j f\Big\|_p
\lc  \sup_{j\le N} 2^{-j} \|\La_j f\|_p,
\Ee
and for the case $s=1$ we have
\Be \label{jleNkleNp>1ineqc}
\Big(\sum_{k\le N} 2^{k r} \Big\|\sum_{j\le N} L_k\bbE_N^\perp L_j\Lambda_j f\Big\|_p^r\Big)^{1/r}
\lc \sum_{j=0}^N 2^{j}\|\La_jf\|_p.
\Ee\end{subequations}
The same inequalities  hold  with $\bbE_N^\perp$ replaced by 
 $T_N[\cdot,\fa]$ if $\|\fa\|_\infty\le 1$.
 \end{proposition}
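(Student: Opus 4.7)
The plan is to follow the template of Proposition \ref{jleNkleN} essentially verbatim, with the only structural change being that the $p$-triangle inequality is replaced by the ordinary Minkowski inequality (valid since $p \ge 1$). Starting from Corollary \ref{Upscor} applied in the regime $0 \le j,k \le N$,
$$
2^{ks}\|L_k\bbE_N^\perp L_j\La_j f\|_p \,\lesssim\, U_{p,s}(j,k,N)\,2^{js}\|\La_j f\|_p,
$$
with $U_{p,s}(j,k,N)=2^{k(1+s)}2^{j(1-s)}2^{-2N}$ as in the third line of \eqref{BjkN-pge1}, combined with Minkowski's inequality in $L^p$ applied to $\sum_{j\le N}L_k\bbE_N^\perp L_j\La_j f$, I would reduce the left-hand sides of \eqref{jleNkleNp>1ineqa}--\eqref{jleNkleNp>1ineqc} to the $\ell^r(\{k\le N\})$-quasi-norm of the weighted double sum
$$
2^{k(1+s)}2^{-2N}\sum_{j\le N} 2^{j(1-s)}\,\bigl(2^{js}\|\La_j f\|_p\bigr).
$$

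For the main range $-1<s<1$ in \eqref{jleNkleNp>1ineqa}, both exponents $1\pm s$ are strictly positive, so the $j$-sum is geometric with peak at $j=N$, bounded by $2^{N(1-s)}\sup_j 2^{js}\|\La_j f\|_p$. Plugging back, the combined $k$-dependent factor becomes $2^{(k-N)(1+s)}$, which is geometric with peak at $k=N$ and summable in $\ell^r(\{k\le N\})$ since $1+s>0$; the powers of $2^N$ cancel exactly, yielding \eqref{jleNkleNp>1ineqa}.

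The two endpoints require the $\ell^r$-sum to be restructured. For $s=1$, the weight $2^{j(1-s)}$ becomes $1$, so the $j$-sum remains $\sum_{j\le N}2^j\|\La_j f\|_p$ with no geometric collapse, while the remaining $k$-factor $2^{2(k-N)}$ is still summable in $\ell^r(\{k\le N\})$ uniformly in $N$, giving \eqref{jleNkleNp>1ineqc}. For $s=-1$, the $k$-factor $2^{k(1+s)}=1$ no longer decays, so the $\ell^r(\{k\le N\})$-quasi-norm in \eqref{jleNkleNp>1ineqa} would diverge, and the statement legitimately weakens to $\sup_{k\le N}$ as in \eqref{jleNkleNp>1ineqb}; the $j$-sum then rearranges as $\sum_{j\le N}2^{2j}\cdot 2^{-j}\|\La_j f\|_p\lesssim 2^{2N}\sup_j 2^{-j}\|\La_j f\|_p$, which exactly absorbs the $2^{-2N}$ prefactor.

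I do not foresee any serious obstacle; this is a routine three-case geometric summation, completely parallel to the $p\le 1$ analysis of Proposition \ref{jleNkleN}. The closing assertion on $T_N[\cdot,\fa]$ is automatic from item (iii) of Proposition \ref{localizedproposition}, which gives the same pointwise $U_{p,s}(j,k,N)$ bound for $T_N[\cdot,\fa]$ uniformly in $\|\fa\|_\infty\le 1$, so every step of the argument applies unchanged.
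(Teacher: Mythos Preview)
Your proposal is correct and follows essentially the same approach as the paper: apply the triangle inequality in $L^p$ to extract the $j$-sum, invoke Corollary~\ref{Upscor} with the explicit constant $U_{p,s}(j,k,N)=2^{k(1+s)}2^{j(1-s)}2^{-2N}$ from \eqref{BjkN-pge1}, and then carry out the three-case geometric summation. The only cosmetic difference is that the paper sums the $k$-series first and then the $j$-series, whereas you do it in the opposite order; the outcome is identical.
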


 \begin{proposition}\label{j>NkleNp>1}
Let  $1\le p\le\infty$. Then 
\begin{subequations}
for all $r>0$,
\Be\label{j>NkleNp>1ineqa}
\Big(\sum_{k\le N} 2^{ks r} \Big\|\sum_{j>N} L_k\bbE_N L_j\Lambda_j f\Big\|_p^r\Big)^{1/r}\lc \sup_{j>N} 2^{js}\|\La_j f\|_p\quad \text{ if }\,s>\tfrac 1p-1
\Ee
Moreover, for the case $s=\frac 1p-1$ and $1\le p<\infty$,
\Be\label{j>NkleNp>1ineqb}
\Big(\sum_{k\le N} 2^{k(\frac 1p-1) r} \Big\|\sum_{j>N} L_k\bbE_N L_j\Lambda_j f\Big\|_p^r\Big)^{1/r}\lc \sum_{j=N+1}^\infty  2^{j(\frac 1p-1)}\|\La_j f\|_p.
\Ee
Finally, for the case $s=-1$ and $p=\infty$
\Be \label{j>NkleNp>1ineqc}
\sup_{k\le N} 2^{-k} \Big\|\sum_{j>N} L_k\bbE_N L_j\Lambda_j f\Big\|_\infty \lc \sum_{j=N+1}^\infty 2^{-j}\|\La_j f\|_\infty.
\Ee
\end{subequations}
The same inequalities  hold  when $\bbE_N$ is replaced by 
 $T_N[\cdot,\fa]$, with $\|\fa\|_\infty\le 1$.
 \end{proposition}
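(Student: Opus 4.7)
The plan is to follow the template of Proposition \ref{j>NkleN} essentially verbatim, substituting the ordinary triangle inequality (valid since $p \geq 1$) for the $p$-triangle inequality used there, and using the explicit form of $U_{p,s}(j,k,N)$ from \eqref{BjkN-pge1} in the regime $k \leq N < j$, namely
\[
U_{p,s}(j,k,N) = 2^{k(1+s)} \, 2^{j(1/p - 1 - s)} \, 2^{-N/p}.
\]
Setting $A_j = 2^{js} \|\La_j f\|_p$, Corollary \ref{Upscor} together with the triangle inequality in $L^p$ yields the master estimate
\[
2^{ks} \Bigl\|\sum_{j > N} L_k \bbE_N L_j \La_j f\Bigr\|_p \lc 2^{k(1+s)} \, 2^{-N/p} \sum_{j > N} 2^{j(1/p-1-s)} A_j,
\]
from which all three parts follow by an elementary estimation.

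For part (a) with $s > 1/p - 1$, the exponent $1/p - 1 - s$ is negative and the inner $j$-series is geometric with sum $\lc 2^{N(1/p - 1 - s)} \sup_{j > N} A_j$; this collapses the right-hand side to $2^{(k-N)(1+s)} \sup_{j > N} A_j$. Since $s > 1/p - 1 \geq -1$ (strict except for the excluded case $p = \infty$, $s = -1$), the $\ell^r$-sum $\bigl(\sum_{k \leq N} 2^{(k-N)(1+s)r}\bigr)^{1/r}$ is finite, giving \eqref{j>NkleNp>1ineqa}.

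For the endpoint (b), $s = 1/p - 1$ with $1 \leq p < \infty$, the geometric factor $2^{j(1/p - 1 - s)}$ is identically $1$, so I cannot extract a supremum in $j$; instead I retain the full $\ell^1$-sum in $j$, obtaining
\[
2^{k(1/p-1)} \Bigl\|\sum_{j > N} L_k \bbE_N L_j \La_j f\Bigr\|_p \lc 2^{(k-N)/p} \sum_{j > N} 2^{j(1/p-1)} \|\La_j f\|_p,
\]
and the $\ell^r$-sum over $k \leq N$ converges since $r/p > 0$. Case (c) with $p = \infty$, $s = -1$ is essentially the same computation: one checks that $U_{\infty,-1}(j,k,N) = 1$ throughout the regime, so taking the supremum over $k \leq N$ on the left and summing in $j$ on the right yields \eqref{j>NkleNp>1ineqc}. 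The statement for $T_N[\cdot,\fa]$ with $\|\fa\|_\infty \le 1$ is immediate from part (iii) of Corollary \ref{Upscor}, since the pointwise bounds used throughout are identical.

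The only point requiring any real care is the passage from the generic case (a) to the endpoint cases (b) and (c): at those points the geometric series in $j$ becomes critical, and one must trade the $\sup_j$ on the right for a full $\ell^1$-sum in $j$ before summing in $k$. This trade-off is entirely parallel to the endpoint arrangements already carried out in Propositions \ref{jleNk>Np>1} and \ref{jleNkleNp>1}, so no qualitatively new idea is needed. I do not anticipate any other obstacles.
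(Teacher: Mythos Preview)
Your proposal is correct and follows essentially the same approach as the paper: apply the triangle inequality in $L^p$ together with Corollary~\ref{Upscor}, insert the explicit formula $U_{p,s}(j,k,N)=2^{k(1+s)}2^{j(1/p-1-s)}2^{-N/p}$ from \eqref{BjkN-pge1}, and then evaluate the resulting double sum. The paper happens to evaluate the $k$-sum first (using $s>-1$) and then the $j$-sum, whereas you do the $j$-sum first (using $s>1/p-1$) and then the $k$-sum, but this is a cosmetic reordering and the endpoint cases are handled identically.
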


\subsection*{\it Proofs}
The proofs of the four propositions involve Corollary \ref{Upscor} and an application of the triangle inequality for $L^p$ when $p\ge 1$.

\begin{proof}[Proof of Proposition \ref{jk>Np>1}]
Assume $s<1/p$. By the triangle inequality and Corollary \ref{Upscor}
the left hand side of 
\eqref{jk>Np>1ineqb}
 is estimated by
\begin{align*}
&\Big(\sum_{k>N} 2^{ksr} \Big[\sum_{j>N}\|L_k \bbE_N L_j \La_j f\|_p \Big]^r\Big)^{\frac 1r}
\\
&\lc \Big(\sum_{k>N} \Big[ \sum_{j>N} U_{p,s}(j,k,N) 2^{js} \|\La_j f\|_p\Big]^r\Big)^{\frac 1r}
\\
&\lc \Big(\sum_{k>N} 2^{k(s-\frac 1p) r}\Big)^{\frac 1r}
\sum_{j>N} 2^{j(\frac 1p-1-s)} 2^{N} 2^{js}\|\La_j f\|_p.
\end{align*}
When $s<1/p$ the first factor is $c(p,s,r) 
2^{N(s-1/p)}$ and we see that the entire expression is dominated by a constant times 
\[\sum_{j>N} 2^{(N-j) (s+1-\frac 1p)} 2^{js}
\|\La_j f\|_p
\]
which proves \eqref{jk>Np>1ineqb} 
and of course also
\eqref{jk>Np>1ineqa} 
 when $s<1/p$. Replacing the $\ell^r$ norm by a supremum in the above proof we see that
 \eqref{jk>Np>1ineqa}   is valid even for $s=1/p$. 
\end{proof} 

\begin{proof}[Proof of Proposition \ref{jleNk>Np>1}]
Let $s<1/p$. The left hand side of 
\eqref{jleNk>Np>1ineqa}
 is estimated by a constant times
\begin{align*}
&\Big(\sum_{k>N} \Big[ \sum_{j\le N} U_{p,s}(j,k,N) 2^{js} \|\La_j f\|_p\Big]^r\Big)^{\frac 1r}
\\
&\lc \Big(\sum_{k>N} 2^{k(s-\frac 1p) r}\Big)^{\frac 1r}
\sum_{j\le N} 2^{j(1-s)} 2^{N(\frac 1p-1)} 2^{js}\|\La_j f\|_p
\\
&\lc\sum_{j\le N} 2^{(j-N)(1-s)}  2^{js}
\|\La_j f\|_p.
\end{align*}
This  easily yields  \eqref{jleNk>Np>1ineqa}.
The proofs of \eqref{jleNk>Np>1ineqb}, \eqref{jleNk>Np>1ineqc} are similar. 
\end{proof} 
\begin{proof}[Proof of Proposition \ref{jleNkleNp>1}]
Assume $s>-1$. The left hand side of 
\eqref{jleNkleNp>1ineqa}
 is estimated by a constant times
\begin{align*}
&\Big(\sum_{k\le N} \Big[ \sum_{j\le N} U_{p,s}(j,k,N) 2^{js} \|\La_j f\|_p\Big]^r\Big)^{\frac 1r}
\\
&\lc \Big(\sum_{k\le N} 2^{k(1+s)r}\Big)^{\frac 1r}
\sum_{j\le N} 2^{j(1-s)} 2^{-2N} 2^{js}\|\La_j f\|_p\,,
\end{align*}
and since the first factor is $\tilde c(p,q,r) 2^{N(1+s)}$ we estimate the expression by
a constant times \[\sum_{j\le N} 2^{(j-N)(1-s)}  2^{js}
\|\La_j f\|_p.\]
This  easily yields  \eqref{jleNkleNp>1ineqa} and also 
\eqref{jleNkleNp>1ineqc}. 
The proof of \eqref{jleNkleNp>1ineqb} which has  a supremum in $k$ for the case $s=-1$ is similar.
\end{proof}
\begin{proof}[Proof of Proposition \ref{j>NkleNp>1}]
Let  $s>-1$. The left hand side of 
\eqref{j>NkleNp>1ineqa}
 is estimated by a constant times
\begin{align*}
&\Big(\sum_{k\le N} \Big[ \sum_{j> N} U_{p,s}(j,k,N) 2^{js} \|\La_j f\|_p\Big]^r\Big)^{\frac 1r}
\\
&\lc \Big(\sum_{k\le N} 2^{k(1+s)r}\Big)^{\frac 1r}
\sum_{j> N} 2^{j(\frac 1p-1-s)} 2^{-\frac Np} 2^{js}\|\La_j f\|_p
\\
&\lc 
\sum_{j> N} 2^{(j-N)(\frac 1p-1-s)} 2^{js}\|\La_j f\|_p
\end{align*}
which yields 
\eqref{j>NkleNp>1ineqa} and also 
\eqref{j>NkleNp>1ineqb}. 
The proof of \eqref{j>NkleNp>1ineqc} for the case  $s=-1$ is similar.
\end{proof}

\begin{remark}
\label{R_ENp>1}
Similar reasonings as in Remark \ref{R_ENp<1} justify the meaning of the extension formula for $\SE_N$ in \eqref{extEN}, for the ranges of indices in (i), (ii), (iii) in Theorem \ref{basic-sequence-besov},
and the cases (i), (ii) in Theorem \ref{p=inftythm}. In the special case $s=1/p$, for $1<p\leq\infty$, one has
\[
\big\|\SE_N({\textstyle\sum_{j=J_1}^{J_2}L_j\La_j f})\big\|_{B^{1/p}_{p,\infty}}\lesssim_N 2^{-J_1}\|f\|_{B^s_{p,\infty}},
\]
so the series $\sum_{j=0}^\infty\SE_N(L_j\La_jf)$ always converges in $B^{1/p}_{p,\infty}$, even though the series $\sum_{j=0}^\infty L_j\La_j f$ only does if $f\in b^{1/p}_{p,\infty}$. 
\end{remark}

\begin{proof}[Proof of Theorems \ref{basic-sequence-besov} and \ref{p=inftythm}: Sufficiency for $1\leq p\leq\infty$] 
As before, one uses the previous four propositions combined with \eqref{PiN}, \eqref{SENPiN} and trivial embeddings of Besov spaces.
  \end{proof}
  
  \section{ Necessary conditions for  boundedness when $s=1/p$} \label{necconds=1/p}
  It is well known that the characteristic function of a cube (and also the Haar functions) do not belong to $B^{1/p}_{p,q}$ for any $q<\infty$; see \cite[2.6.3 (18)]{triebel_teubner}. In this section we elaborate a bit more on this result. 
	
Recall that 
$b^{s}_{p,\infty}$ denotes the closure of the Schwartz space in the $B^s_{p,\infty}$ norm.
Note also that $B^s_{p,q}\subset b^s_{p,\infty}$ for all $q<\infty$; see \eqref{bsinfty} above.
Finally, $\Span \sH_d$ denotes the vector space of all 
finite linear combinations of Haar functions.

\begin{proposition}\label{intersectzero}
Let $0<p\leq \infty$. Then
\[
b^{1/p}_{p,\infty}\cap\Span \sH_d
=\{0\}.
\]
\end{proposition}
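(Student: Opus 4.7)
The strategy is to prove the contrapositive: every nonzero $f\in \Span\sH_d$ satisfies $\liminf_{k\to\infty}2^{k/p}\|L_k f\|_p>0$, which by the characterization \eqref{bsinfty} of $b^{1/p}_{p,\infty}$ (and the analogous characterization for $p=\infty$ recalled in \S\ref{S_qnorm}) forces $f\notin b^{1/p}_{p,\infty}$. The underlying geometric fact is that a nonzero Haar sum necessarily exhibits a jump across some $(d-1)$-dimensional dyadic face, and such a jump produces enough oscillation in $L_k f$ to destroy the required decay.

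First I would identify such a jump face. Writing $f=\sum_{\mu\in\bbZ^d}a_\mu \bbone_{I_{K_0,\mu}}$ for some $K_0\in\bbN_0$, with only finitely many $a_\mu\ne 0$, I start from $\mu^\ast$ with $a_{\mu^\ast}\ne 0$ and translate along the $x_1$-axis; since $a_{\mu^\ast+me_1}=0$ for $|m|$ large, some consecutive pair $\mu, \mu+e_1$ must satisfy $a_\mu\ne a_{\mu+e_1}$. Set $a:=a_\mu$, $b:=a_{\mu+e_1}$, $x_1^\ast:=2^{-K_0}(\mu_1+1)$, and pick $x_0=(x_1^\ast,y_0)\in\bbR^d$ with $y_0$ in the interior of the shared face. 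For $\delta>0$ sufficiently small, $f(y)=a+(b-a)\bbone_{\{y_1>x_1^\ast\}}(y)$ for every $y\in B_\delta(x_0)$.

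Next I would compute $L_k f$ on $B_{\delta/2}(x_0)$ for $k$ large enough that $2^{-k}<\delta/10$. Then $\beta_k(x-\cdot)$ is supported inside $B_\delta(x_0)$, so the cancellation $\int\beta_k=0$ kills the constant part and leaves
\[
L_k f(x)=(b-a)\int\beta_k(x-y)\,\bbone_{\{y_1>x_1^\ast\}}(y)\,dy=(b-a)\,\Phi\bigl(2^k(x_1-x_1^\ast)\bigr),
\]
where $\Phi(s):=\int_{-\infty}^{s}\tilde\beta(\tau)\,d\tau$ and $\tilde\beta(\tau):=\int_{\bbR^{d-1}}\beta(\tau,v')\,dv'$. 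Since $\tilde\beta$ has compact support with $\int\tilde\beta=0$, the function $\Phi$ is compactly supported; its nontriviality follows from $\widehat{\tilde\beta}(\xi_1)=\widehat\beta(\xi_1,0)$, which is nonzero for $1/8\le|\xi_1|\le 1$ by the assumptions on $\beta$ in \S\ref{S_qnorm}.

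Finally, the change of variables $s=2^k(x_1-x_1^\ast)$ gives, for $p<\infty$ and $k$ large,
\[
\|L_k f\|_p^p\ge \int_{B_{\delta/2}(x_0)}|L_k f(x)|^p\,dx\gtrsim |b-a|^p\,\delta^{d-1}\,2^{-k}\,\|\Phi\|_p^p,
\]
so $2^{k/p}\|L_k f\|_p\gtrsim 1$. For $p=\infty$ one picks $x$ with $2^k(x_1-x_1^\ast)=s_0$ at a point where $|\Phi|$ nearly attains its supremum, obtaining $\|L_k f\|_\infty\ge |b-a|\,\|\Phi\|_\infty/2>0$. Either way $f\notin b^{1/p}_{p,\infty}$, as required. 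The main obstacle is verifying the non-vanishing of $\Phi$, which rests crucially on the frequency-support hypothesis on $\beta$; once this is in place, everything else is a routine localization argument exploiting $\int\beta=0$.
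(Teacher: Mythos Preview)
Your argument is correct, and the core idea matches the paper's: a nonzero finite linear combination of Haar functions has a jump across some dyadic face, and this jump forces $2^{k/p}\|L_k f\|_p$ (or its analogue) to stay bounded below, contradicting membership in $b^{1/p}_{p,\infty}$.

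The implementations differ in two ways. First, the paper picks a \emph{minimal} lattice point $\mu$ and rescales so that the function equals $1$ on $[0,1)^d$ and $0$ on the rest of $(-1,1)^d$, i.e.\ it works near a \emph{corner}; you instead walk along the $e_1$-axis to find a single face across which $f$ jumps and work in the interior of that face. Your reduction is a bit simpler and avoids the partial-order argument. Second, the paper introduces a tensorized test function $\Psi=\Delta^M(\phi_0\otimes\varphi_0)$ (defined in \S\ref{testfctsubsection} and reused later in the paper) and computes $\Psi_k*g$ near the corner; you stay with the original local means $\beta_k$ and integrate out the transverse variables to obtain the one-variable primitive $\Phi(s)=\int_{-\infty}^s\tilde\beta$, whose nontriviality you deduce directly from the Fourier hypothesis $|\widehat\beta(\xi)|>0$ on $\{1/8\le|\xi|\le1\}$. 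This is a cleaner route for the present proposition, since no auxiliary kernel is needed; the paper's choice of $\Psi$ is motivated by its later reuse in \S\ref{B1pq-bd-sect} rather than by any necessity here.
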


Before  proving  this proposition we define, given $M\in \bbN$,  certain test functions $\Psi$ with vanishing moments of order up to $2M$ (which, along with their dilates $\Psi_N= 2^{Nd}\Psi(2^N\cdot)$,  will be also be used in subsequent sections).

\subsection{\it Tensorized test functions}\label{testfctsubsection}


Given $M\in\SN$, consider a non-negative even function $\phi_0\in C^\infty_c(-\tfrac18, \tfrac 18)$ such that  
$\phi_0^{(2M)}(t)>0$ for all $t$ in some interval $[-2\e,2\eps]$ (with $\eps<1/16$).
Since $\widehat\phi_0(0)=\int\phi_0>0$, dilating if necessary we may also assume that $\widehat \phi_0\neq 0$ on $(-1,1)$.
Let $\varphi_0 \in C^\infty_c((-\tfrac 18, \tfrac 18)^{d-1})$ be 
such that $\widehat \varphi_0\neq 0$ on $(-1,1)^{d-1}$ and $\widehat \varphi_0(0)=1$.
For $M\ge 1$, let \[ \theta(t)= (\tfrac{d}{dt})^{2M} \phi_0(t), \qquad
\vartheta (x_2,\dots, x_d)= \big(
\tfrac{\partial^2}{\partial x_2^2} +\dots+
\tfrac{\partial^2}{\partial x_d^2} \big)^M \varphi_0(x').
\] 
In one dimension the function $\vartheta $ is obsolete and we just define $\Psi=\theta$. If $d\ge 2$ we define  \Be\Psi(x)= 
\Delta^M[ \phi_0\otimes \varphi_0] (x)= \theta(x_1) \varphi_0(x') + \phi_0(x_1) \vartheta(x').
\label{Psitensor}
\Ee
Clearly, $$\int_{\SR^d} \Psi (y) y_1^{m_1}\cdots y_d^{m_d} 
dy=0, \quad\mbox{when }m_1+\ldots+m_d< 2M.$$
If we choose $2M\gg |s|+  d/p-d$ then for all $f\in 
B^{s}_{p,q}(\SR^d)$, 
 \Be\label{besovlowerbdPsi}
 \|f\|_{B^{s}_{p,q}}
 \,\gtrsim\,
 \Big\| \big\{ 2^{ks}\Psi_k*f\big\}_{k\in \bbN}\Big\|_{\ell^q(L^p)}.
 \Ee

    


\subsection{
\it Proof of Proposition \ref{intersectzero}}
We argue by contradiction and assume that there is a nontrivial  $f\in b^{1/p}_{p,\infty}\cap\Span \sH_d$. Then for some $N\in \bbN$ we can write $f$  as
\[
f=\sum_{\nu\in\Gamma} a_\nu\bbone_{I_{N,\nu}},
\]
where 
$I_{N,\nu}=\prod_{i=1}^d [\nu_i2^{-N}, (\nu_i+1)2^{-N})$, 
$\Gamma$ is a finite  nonempty subset of $\bbZ^d$, and $a_\nu\in \bbC$ with $a_\nu\neq 0$ for $\nu\in \Gamma$.

Consider the usual partial order in $\SZ^d$, that is for $\mu,\nu\in\SZ^d$ we say that
\[
\mu\leq \nu\quad \mbox{if}\quad \mu_i\leq \nu_i\quad\forall\,i=1,\ldots,d.
\]
Pick a minimal element $\mu\in\Ga$, meaning that that if $\nu\in\Ga$ and $\nu\leq\mu$ then necessarily $\nu=\mu$.
Now consider the function
\[
g(x)=f(2^{-N}(x+\mu))/a_\mu,\]
which also belongs to $b^{1/p}_{p,\infty}\cap\Span \sH_d$. Note that $g$ is now a linear combination of disjoint unit cubes 
and satisfies 
\Be
g(x)=\left\{\Ba{lll} 1 & & {\rm if}\quad  x\in [0,1)^d\\
0 && {\rm if}\quad x\in(-1,1)^d\setminus[0,1)^d.\Ea\right.
\label{pR1}
\Ee
This last property is a consequence of the minimality of $\mu$.

Consider now the function $\Psi\in C^\infty_c(\SR^d)$ as in \eqref{Psitensor}, with 
the pairs of functions  $\phi_0,\theta$ and $\varphi_0,\vartheta$  as in the paragraph preceding that definition.
So, in particular,
\[
  \int_{\SR^{d-1}}\varphi_0(x')\,dx'=1\mand  \int_{\SR^{d-1}}\vartheta(x')\,dx'=0.
\]
Observe further that for $t\in [-2\eps, -\eps]$
\begin{align*}\int_0^{\infty}\theta(t-s)ds=
\int_{-\infty}^t \theta(u) du= - \int_{t}^0 \theta(u) du
\le - \int_{-\eps}^{0} \theta(s) ds
\end{align*}
since  $\int_{-\infty}^0\theta(s) ds=\phi_0^{(2M-1)}(0)=0$ (because $\phi_0$ is even) and $\theta(u)
> 0$ for  $u\in (-2\eps,0)$. Thus,   if we set  
\[ c:= \int_{-\eps}^{0} \theta(s) ds 
>0\]
we obtain
\Be \label{pR2}
\int_0^{\infty}\theta(t-s)ds
 \leq -  c,\quad \forall\;t\in[-2\eps,-\eps].
 \Ee

Next consider $\Psi_k(x)=2^{kd}\Psi(2^kx)$, $k\geq1$, 
and note that $\Psi$ has enough vanishing moments so that
\[
\|h\|_{B^{1/p}_{p,\infty}}\gtrsim \sup_{k\geq1}2^{k/p}\|\Psi_k*h\|_p,\quad h\in B^{1/p}_{p,\infty}.
\]
Moreover, since $g\in b^{1/p}_{p,\infty}$ we also have
\Be
2^{k/p}\|\Psi_k*g\|_p\to 0, \quad {\rm as}\quad k\to\infty.
\label{pR3}
\Ee
We show that this leads to a contradiction.
Indeed, if $x_1\in[-\eps 2^{1-k},-\eps 2^{-k}]$  and $x'\in [1/4,3/4]^{d-1}$ then, using 
that $\supp\Psi_k(x-\cdot)\subset(-1,1)\times(1/8,7/8)^{d-1}$, we may apply 
\eqref{pR1} and \eqref{pR2} to obtain
\Beas
g*\Psi_k(x) & = & \int_{[0,1)^d}\Psi_k(x-y)\,dy\\
& = & \int_0^1\theta_k(x_1-y_1)\,dy_1\int_{(0,1)^{d-1}}\varphi_{0,k}(x'-y')\,dy'+\\
& & \quad\quad +\int_0^1\phi_{0,k}(x_1-y_1)\,dy_1\int_{(0,1)^{d-1}}\vartheta_{k}(x'-y')\,dy'\\
& = & 2^k\int_0^1\theta(2^k(x_1-y_1))\,dy_1= \int_0^{2^k}\theta(2^kx_1-u)\,du
\\
& = &\int_0^{\infty}\theta(2^kx_1-u)\,du\leq -c/2.
\Eeas
Thus we must have
\[
\|g*\Psi_k\|_p\geq (c/2)\,(\e2^{-k})^{1/p}(1/2)^{\frac{d-1}p},
\]
which contradicts \eqref{pR3}. \qed

\begin{Remark}
In the  recent work \cite{ysy} by Yuan, Sickel and  Yang, the 
authors study regularity properties of the Haar system in other Besov-type 
spaces $B^{s,\tau}_{p,q}(\mathbb{R}^d)$ which  serves as a first step to investigate its basis properties  in these spaces.
\end{Remark}

   \section{ Necessary conditions for boundedness when $s=1$ 
	}\label{B1pq-bd-sect}
We now consider the necessity of the condition $q\le p$ in part (iv) of  Theorem \ref{basic-sequence-besov}. This restriction was also noticed in \cite{oswald}. 
For $q>p$ we show that the  operators $\bbE_N$ are bounded, but not uniformly bounded and determine the precise behavior of the operator norms as $N\to\infty$.
The lower bounds will be obtained by testing with suitable functions with compact support; we refer to \eqref{localnorms} for the notation in the next theorem.

  \begin{theorem} 
  \label{equiv-s=1thm}
  Suppose that  either 
  
  (i) $\frac{d}{d+1} <p< 1\;$ and $\;p\le q\leq \infty$, \;\; or 
  
  (ii) $p=1$ and $q=\infty$.
  
  \noindent
  Then   for large $N$
$$\|\bbE_N\|_{B^1_{p,q}\to B^1_{p,q}} \approx  N^{\frac 1p-\frac 1q}.$$
Moreover, for  cubes $Q$ of side length  $\ge  1/2$,
$$\rhoop\big(\bbE_N, B_{p,q}^1, Q\big) 
\approx  N^{\frac 1p-\frac 1q}.$$


\end{theorem}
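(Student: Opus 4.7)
My plan is to establish matching upper and lower bounds $\approx N^{\frac 1p-\frac 1q}$ both for the global operator norm of $\bbE_N$ on $B^1_{p,q}$ and for its local version $\rhoop(\bbE_N, B^1_{p,q}, Q)$ on cubes $Q$ of side length $\ge 1/2$.

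For the upper bound, I decompose $\bbE_N = \Pi_N + (\bbE_N-\Pi_N)$. The part $\Pi_N$ is uniformly bounded in $N$ by \eqref{PiN}. The remainder splits through \eqref{SENPiN} into a high-frequency piece $\bbE_N\sum_{j>N}L_j\La_jf$ and a low-frequency piece $\bbE_N^\perp\sum_{j\leq N}L_j\La_jf$. At $s=1$, the high-frequency piece is handled by Propositions \ref{jk>N} and \ref{j>NkleN} (for $p<1$), or their $p=1$ analogues \ref{jk>Np>1} and \ref{j>NkleNp>1}; each gives a bound of order $\sup_{j>N}2^j\|\La_jf\|_p\leq\|f\|_{B^1_{p,q}}$ without any $N$-dependence. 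The delicate low-frequency piece is controlled by Propositions \ref{jleNk>N} and \ref{jleNkleN} (or their $p=1$ analogues \ref{jleNk>Np>1} and \ref{jleNkleNp>1}), which at $s=1$ yield
\[
\Big(\sum_{j=0}^N 2^{jp}\|\La_j f\|_p^p\Big)^{1/p}\quad (p<1),\qquad\text{resp.}\qquad\sum_{j=0}^N 2^j\|\La_j f\|_1\quad (p=1).
\]
Hölder's inequality on $N+1$ summands,
\[
\Big(\sum_{j=0}^N a_j^p\Big)^{1/p} \leq (N+1)^{\frac 1p-\frac 1q}\Big(\sum_{j=0}^N a_j^q\Big)^{1/q},
\]
applied with $a_j=2^j\|\La_j f\|_p$, then provides the factor $N^{\frac 1p-\frac 1q}$. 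Because the entire chain of estimates involves only the Littlewood--Paley side and no spatial cutoff, the same proof establishes $\rhoop(\bbE_N, B^1_{p,q}, Q) \lesssim N^{\frac 1p-\frac 1q}$.

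For the lower bound I will construct, for each large $N$, a test function $f_N$ supported in $Q$ with $\|f_N\|_{B^1_{p,q}}\lesssim 1$ and $\|\bbE_N f_N\|_{B^1_{p,q}}\gtrsim N^{\frac 1p-\frac 1q}$. The guiding heuristic is that the Hölder step above becomes sharp precisely when $2^j\|\La_j f\|_p$ has common order across $j=0,\dots,N$; accordingly I aim to engineer $f_N$ so that the staircase $\bbE_N f_N$ has essentially equal Besov content $2^\ell\|L_\ell(\bbE_N f_N)\|_p\sim N^{-1/q}$ at each scale $\ell=0,\dots,N$. A natural candidate is a normalized, disjointly supported sum of tensorized wavelet-type bumps of the form $\Psi_k(x)=c_k\,2^{kd/p}\Psi\!\left(2^k(x-y_k)\right)$, with $\Psi$ the tensorized test function of \S\ref{testfctsubsection}, the centers $y_k\in Q$ chosen so that each $\Psi_k$ straddles a dyadic interface at scale $N$, and $c_k$ chosen so the full sum has Besov norm of order $1$; each $\bbE_N\Psi_k$ is then a step function whose Besov weight is concentrated at scale $k$, and the superposition delivers uniform content at all $N$ scales. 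The main obstacle will be making these two competing requirements compatible: ensuring \emph{small} $\|f_N\|_{B^1_{p,q}}$ (via non-subadditivity of the quasi-norm when $p<1$, or via Littlewood--Paley cancellation when $p=1$) \emph{simultaneously} with \emph{large} $\|\bbE_N f_N\|_{B^1_{p,q}}$. In the Banach endpoint $p=1$, $q=\infty$, I have the alternative of arguing by duality: $(B^1_{1,\infty})^*\supset B^{-1}_{\infty,1}$ and $\bbE_N$ is self-adjoint in $L^2$, so it suffices to exhibit a suitable sum of Haar-type atoms in $B^{-1}_{\infty,1}$ whose $\bbE_N$-image has norm $\gtrsim N$.
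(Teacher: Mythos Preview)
Your upper bound is correct and is exactly the paper's argument: Propositions \ref{jk>N}, \ref{j>NkleN} (resp.\ \ref{jk>Np>1}, \ref{j>NkleNp>1}) handle the high-frequency part with no $N$-loss, Propositions \ref{jleNk>N}, \ref{jleNkleN} (resp.\ \ref{jleNk>Np>1}, \ref{jleNkleNp>1}) give the $\ell^p$-sum for the low-frequency part, and H\"older over $N+1$ terms produces $N^{1/p-1/q}$.

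The lower bound, however, has a real gap. Your heuristic that $\bbE_N f_N$ should have equal Besov content $\sim N^{-1/q}$ at every scale $\ell=0,\dots,N$ cannot work: that would give $\|\bbE_N f_N\|_{B^1_{p,q}}\sim 1$, not $N^{1/p-1/q}$. What actually happens (and what your wavelet-bump construction would also produce, if carried out) is that the gain sits at the \emph{single} scale $N$: the jumps of the staircase $\bbE_N f_N$ contribute there, and it is the $\ell^p$-combination of $\sim N$ disjointly supported pieces that yields the factor $N^{1/p}$, to be compared against the $\ell^q$-combination $N^{1/q}$ in $\|f_N\|_{B^1_{p,q}}$. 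Your proposal never makes this precise, and the step ``each $\bbE_N\Psi_k$ has Besov weight concentrated at scale $k$'' is the wrong picture (for $k<N$ the weight of $\bbE_N\Psi_k-\Psi_k$ lives at scale $N$). There is also a packing issue in $d=1$: bumps at scales $k=0,\dots,N-1$ have total length $\sum_k 2^{-k}>1$ and cannot be made disjoint inside the unit interval.

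The paper's construction is quite different and entirely explicit. One takes, in the first variable, $\sim N$ disjointly supported modulated bumps $g_{N,j}(t)=e^{2\pi i 2^j t}u(Nt-2j)$ with $j\in(N/8,N/4)$, and sets $f_N(x)=\chi(x')\sum_j 2^{-j}g_{N,j}(x_1)$. Disjointness plus the frequency localization of $g_{N,j}$ at scale $j$ give $\|f_N\|_{B^1_{p,q}}\lesssim N^{1/q-1/p}$. The heart of the lower bound is a one-variable lemma (Lemma \ref{auxiliarylem}) computing $\theta_N*\bbE_N^{(1)}g_{N,j}$ via a second-order Taylor expansion; it shows that on each dyadic interval this quantity equals $2^{-N}g_{N,j}'(\nu/2^N)\,\Theta(2^Nt-\nu)+O(2^{-2(N-j)})$, where $\Theta=\int^t\theta$, and since $|g_{N,j}'|\sim 2^j$ one obtains $2^N\|\Psi_N*\bbE_N f_N\|_p\gtrsim 1$. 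This is the missing idea in your sketch.

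Finally, your fallback duality argument for $p=1$, $q=\infty$ is circular: the paper in fact deduces the $B^{-1}_{\infty,q}$ lower bound in \S\ref{Ss-1} \emph{from} the $B^1_{1,\infty}$ lower bound, not the other way around.
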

\subsection*{\it Proof of the upper bounds in Theorem \ref{equiv-s=1thm} }
Letting $s=1$ in Propositions \ref{jk>N} and \ref{j>NkleN} 
(and noticing that $d(\frac 1p-1)<1<\frac1p$ when $\frac{d}{d+1}<p<1$), we see that
\[
\Big\|\big\{2^{k}\sum_{j>N} L_k\SE_NL_j\La_jf\big\}_{k=0}^\infty\Big\|_{\ell^q(L^p)} \lesssim \|f\|_{B^1_{p,\infty}}\leq \|f\|_{B^1_{p,q}}.
\]
On the other hand, letting $s=1$ in
 Propositions \ref{jleNk>N} and \ref{jleNkleN}, 
and using H\"older's inequality we obtain
\[
\Big\|\big\{2^{k}\sum_{j\leq N} L_k\SE^\perp_NL_j\La_jf\big\}_{k=0}^\infty\Big\|_{\ell^q(L^p)}   \lesssim  \Big(\sum_{j=0}^N2^{jp} \|\La_jf\|_p^p\Big)^{1/p}  \lc   N^{1/p-1/q} \,\|f\|_{B^1_{p,q}}.
\]
Combining this with \eqref{PiN} and \eqref{SENPiN} we obtain $\|\bbE_N\|_{B^1_{p,q}\to B^1_{p,q}} \lesssim  N^{\frac 1p-\frac 1q}$. The above arguments also apply if $s=p=1$, provided we let $q=\infty$.
\qed

\subsection*{\it Proof of the lower bounds in Theorem \ref{equiv-s=1thm}}
We shall actually prove a stronger result which  gives a lower bound even for a  $B^1_{p,q}\to B^1_{p,\infty} $ estimate and  for functions supported in the  open  unit cube $Q_0=(0,1)^d$. 
\begin{theorem} \label{lowerbounds-s=1thm}
If $0<p\le 1$ and $p\le q\le \infty$,  
then there is $c_{p,q}>0$ such that, for each $N\geq1$,
\Be \label{lowerbounds-s=1}
\sup\Big \{\| \bbE_N f\|_{B^1_{p,\infty}}:\,\, \|f\|_{B^1_{p,q}}\le 1, \,\, \supp(f)\subset Q_0\Big\} \,\ge\, c_{p,q}\, N^{\frac1p-\frac1q}.
\Ee
\end{theorem}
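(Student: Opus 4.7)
The plan is to construct an explicit test function that simultaneously has small $B^1_{p,q}$-norm and, after projection by $\bbE_N$, large $B^1_{p,\infty}$-norm. Fix a smooth bump $\phi \in C^\infty_c(\bbR^d)$ with $\phi(0)=1$ and $\supp \phi \subset (-\tfrac{1}{4}, \tfrac{1}{4})^d$, let $y^* = (\tfrac{1}{2},\ldots,\tfrac{1}{2}) \in Q_0$, and for $j = 0, 1, \ldots, N-1$ set
\[
\phi_j(x) := 2^{j(d/p-1)}\, \phi\bigl(2^j(x-y^*)\bigr), \qquad f_N := \sum_{j=0}^{N-1} \phi_j,
\]
which has support in $Q_0$. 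The amplitudes $c_j = 2^{j(d/p-1)}$ are chosen so that each frequency-localized atom $\phi_j$ carries unit $B^1_{p,q}$-weight at its scale: a scaling computation gives $\|\phi_j\|_p \sim 2^{-j}\|\phi\|_p$, so $2^j \|\La_j f_N\|_p \sim 1$ uniformly in $j$. Since the $\phi_j$ are essentially frequency-disjoint, summing in $\ell^q$ yields the upper bound $\|f_N\|_{B^1_{p,q}} \sim N^{1/q}$.

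For the lower bound on $\|\bbE_N f_N\|_{B^1_{p,\infty}}$, each $\phi_j$ lives at scale $2^{-j} \ge 2^{-N}$, so $\bbE_N f_N$ is essentially the piecewise-constant approximation of $f_N$ on the level-$N$ dyadic grid. A direct computation of the geometric sum shows $f_N(x) \asymp |x-y^*|^{1-d/p}$ for $|x-y^*| \ge 2^{-N}$ when $p<1$, and $f_N(x) \asymp \log(1/|x-y^*|)$ when $p=1$; hence the jump of $\bbE_N f_N$ across two adjacent level-$N$ cubes at distance $\sim k \cdot 2^{-N}$ from $y^*$ has magnitude $\sim k^{-d/p} \cdot 2^{N(d/p-1)}$. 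Summing $p$-th powers of these jumps over the $\sim k^{d-1}$ faces on each shell and using the local-means characterization at scale $2^{-N}$ (with band volume $2^{-Nd}$ per face),
\[
\|L_N \bbE_N f_N\|_p^p \;\gtrsim\; 2^{-Nd}\cdot 2^{N(d-p)}\sum_{k=1}^{2^N} k^{d-1}\cdot k^{-d} \;=\; 2^{-Np}\sum_{k=1}^{2^N} \frac{1}{k} \;\sim\; N\cdot 2^{-Np},
\]
so $2^N\|L_N \bbE_N f_N\|_p \gtrsim N^{1/p}$. Dividing by the upper bound on $\|f_N\|_{B^1_{p,q}}$ gives the claimed ratio $N^{1/p-1/q}$.

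The main obstacle is justifying the piecewise-constant approximation $\bbE_N \phi_j \approx \phi_j$ for $j<N$ with enough quantitative control to extract the jump sizes between adjacent level-$N$ cubes, and confirming that the contributions of the various scales add coherently near $y^*$ rather than cancelling. This will be controlled by the positivity of $\phi$ in a neighborhood of the origin: every $\phi_j$ contributes with the same sign near $y^*$, so the radial profile of $f_N$ is reproduced faithfully after averaging on the dyadic grid. A secondary task is to check that the scales $k \ne N$ in the local-means characterization do not produce a larger contribution than the main term at $k=N$; for $k>N$ this follows from the decay factor $2^{(k-N)(1-1/p)}$ (with $1-1/p \le 0$ since $p\le 1$) that arises naturally in $2^k \|L_k \bbE_N f_N\|_p$, while the contributions from $k<N$ are subdominant because $\bbE_N f_N$ is smooth at scales $\gg 2^{-N}$.
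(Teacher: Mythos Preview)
Your construction is genuinely different from the paper's, and the heuristic is appealing, but there is a real gap in the upper bound for $\|f_N\|_{B^1_{p,q}}$.

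You claim that the $\phi_j$ are ``essentially frequency-disjoint,'' which is false. Since $\phi$ is a bump with $\phi(0)=1$ and \emph{no vanishing moments}, $\widehat{\phi_j}$ is concentrated on the ball $\{|\xi|\lesssim 2^j\}$, not on an annulus. Concretely, for $k<j$ one has $\La_k\phi_j\approx(\int\phi_j)\,\psi_k(\cdot-y^*)$, so
\[
2^{k}\|\La_k\phi_j\|_p \;\approx\; 2^{(k-j)(d+1-d/p)}.
\]
When $p>\tfrac{d}{d+1}$ the exponent $d+1-d/p$ is positive and these off-diagonal terms can be summed, so the bound $\|f_N\|_{B^1_{p,q}}\lesssim N^{1/q}$ is salvageable (with a different justification). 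But for $0<p\le \tfrac{d}{d+1}$ the exponent is $\le 0$; the low-frequency pieces $\La_k f_N$ for $k$ small pick up the total mass $\sum_{j>k}^{N-1}\int\phi_j$, which does not decay, and the upper bound blows up. Since the theorem is stated for the full range $0<p\le 1$, this is a genuine gap. You cannot fix it by giving $\phi$ vanishing moments without destroying the positivity/monotonicity on which your lower-bound argument relies.

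For comparison, the paper avoids this tension by using a completely different test function: a sum of \emph{oscillating} bumps $g_{N,j}(t)=e^{2\pi i 2^j t}u(Nt-2j)$ with pairwise \emph{disjoint spatial supports}. The oscillation produces genuine frequency localization (so the upper bound $\|f_N\|_{B^1_{p,q}}\lesssim N^{1/q-1/p}$ holds for every $p>0$), and the disjoint supports make the $L^p$ computation exact. The lower bound $\|\bbE_N f_N\|_{B^1_{p,\infty}}\gtrsim 1$ is then obtained by an explicit Taylor expansion of $\theta_N*(\bbE_N^{(1)}g_{N,j})$ near each dyadic grid point (the paper's Lemma~5.4). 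Your power-law profile idea is more geometric, but the paper's oscillatory construction is what makes the argument go through uniformly in $p$.

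A minor point: your ``secondary task'' of checking that scales $k\neq N$ do not dominate is unnecessary for a lower bound on $\|\bbE_N f_N\|_{B^1_{p,\infty}}=\sup_k 2^k\|L_k\bbE_N f_N\|_p$; a single large term at $k=N$ suffices.
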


Fix $u\in C^\infty_c(\bbR)$  supported in $(1/8, 7/8)$  with  $u(t)=1$ for $t\in [1/4,3/4]$, and  $\chi\in C^\infty_c (\bbR^{d-1})$ supported in $(1/8,7/8)^{d-1}$ with $\chi(x')=1$ for $x'\in 
(1/4,3/4)^{d-1}$; here $x'=(x_2,\dots, x_d)$.
Define, for large $N$, 
functions of one variable
\Be g_{N,j} (t)= e^{2\pi i 2^j t}  u(Nt-2j),\quad j\in\SN, \label{defgNj}\Ee
and let
\Be \label{fNSchwartz}
f_N(x)=
\chi(x_2,\dots, x_d) \sum_{N/8<j<N/4} 2^{-j} g_{N,j} (x_1) .
\Ee 

\begin{lemma}
For $p\le q\le \infty$ we have
\Be
\|f_N\|_{B^1_{p,q}} \lc N^{1/q-1/p}.
\label{L5.3}
\Ee
\end{lemma}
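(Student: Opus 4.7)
The plan is to use the local-means characterization \eqref{Bspq_localmeans} of the Besov norm,
\[
\|f_N\|_{B^1_{p,q}} \,\approx\, \Big(\sum_{k\geq 0} 2^{kq} \|L_k f_N\|_p^q\Big)^{1/q},
\]
and to exploit the near-disjointness in Fourier of the summands $f_{N,j}(x) := 2^{-j}\chi(x')g_{N,j}(x_1)$ making up $f_N$.

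First, I would analyze the frequency and spatial localization of $g_{N,j}$. A direct computation gives
\[
\widehat{g_{N,j}}(\xi) \,=\, \frac{e^{-2\pi i (\xi-2^j)\,2j/N}}{N}\,\widehat u\Big(\frac{\xi - 2^j}{N}\Big),
\]
so $\widehat{g_{N,j}}$ is concentrated in $\{|\xi-2^j|\lesssim N\}$ with rapid decay off it, while $|g_{N,j}(t)| = |u(Nt-2j)|$ is supported on an interval of length $\leq 3/(4N)$, yielding $\|g_{N,j}\|_p \approx N^{-1/p}$. Since $j>N/8$ forces $2^j \gg N$ for $N$ large, this Fourier support lies in a dyadic annulus $\{|\xi|\approx 2^j\}$, and combined with the Schwartz transverse profile $\widehat\chi$, the full function $f_{N,j}$ inherits the same annular Fourier concentration. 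Also $\|\chi\cdot g_{N,j}\|_p \lesssim \|\chi\|_{L^p(\bbR^{d-1})}\,N^{-1/p}$.

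Next, for each $k,j$ I would bound $\|L_k f_{N,j}\|_p$ using the vanishing moments of $\beta$ (condition \eqref{moments-M}) together with the annular Fourier support of $f_{N,j}$. A standard Fourier-side argument (for $p\geq 1$ by Young's inequality and the rapid decay of $\widehat\beta$, for $p<1$ by the Peetre-maximal-function approach of Lemma \ref{LjLk} applied to functions with localized Fourier support, as systematically used in Section \ref{preparatoryresults}) produces
\[
\|L_k f_{N,j}\|_p \,\lesssim\, 2^{-j}\, 2^{-M'|k-j|}\, N^{-1/p}
\]
for $M'$ as large as we want, thanks to \eqref{Mcondition}. Summing over $j\in(N/8,N/4)$ then yields $\|L_k f_N\|_p \lesssim 2^{-k} N^{-1/p}$ for $k$ in that range, with rapid decay outside.

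Finally, plugging these bounds into the local-means characterization gives
\[
\|f_N\|_{B^1_{p,q}}^q \,\lesssim\, \sum_{k\in(N/8,N/4)} \bigl(2^k\cdot 2^{-k}\,N^{-1/p}\bigr)^q \,\lesssim\, N\cdot N^{-q/p},
\]
whence $\|f_N\|_{B^1_{p,q}} \lesssim N^{1/q - 1/p}$, as claimed (with the obvious modification for $q=\infty$, where the sum is replaced by a supremum and only one $k$ needs to be counted). The main technical obstacle is the range $p<1$, where direct convolution estimates are unavailable; this is handled through the Peetre-maximal-function framework already developed in Section \ref{preparatoryresults}, which reduces $\|L_k f_{N,j}\|_p$ to $\|f_{N,j}\|_p$ up to a geometric factor controlled by the vanishing moments of $\beta$.
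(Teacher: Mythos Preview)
Your overall strategy is correct and matches the paper's: estimate $\|L_k f_N\|_p$ via the individual pieces $L_k f_{N,j}$, show the essential contribution comes from $k$ in the range $(N/8,N/4)$ with $2^k\|L_k f_N\|_p \lesssim N^{-1/p}$ there, and sum in $\ell^q$ to get $N^{1/q-1/p}$. The paper also exploits the pairwise disjointness of $\supp(\beta_k * g_{N,j})$ (for $2^k>N$) to turn the $j$-sum into an exact $\ell^p$ identity, but your use of the $p$-triangle inequality is equally valid.

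The one point that is not properly justified is the key bound
\[
\|L_k f_{N,j}\|_p \lesssim 2^{-j}\,2^{-M'|k-j|}\,N^{-1/p}.
\]
Your appeal to Lemma~\ref{LjLk} and the Peetre maximal function does not apply directly: that lemma concerns $L_kL_j g$, and the Peetre inequality requires $g$ to have Fourier support in a ball of radius $\sim 2^j$. Here $\widehat{f_{N,j}}$ is merely \emph{concentrated} near $|\xi|\approx 2^j$; it is a Schwartz function, not compactly supported, so the machinery of \S\ref{preparatoryresults} is not available without further decomposition of the tails. The paper bypasses this entirely with elementary physical-space arguments that work uniformly for all $p>0$: for $j\ge k$, integrate by parts $M$ times against the oscillation $e^{2\pi i 2^j y_1}$ (each $\partial_{y_1}$ costs at most $2^k$ since $N<2^k$); for $j\le k$, use the $M$ vanishing moments of $\beta_k$ and Taylor's theorem, gaining $2^{-kM}\|\partial^M g_{N,j}\|_\infty \lesssim 2^{-(k-j)M}$. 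In both cases one obtains an $L^\infty$ bound, and the $L^p$ norm follows from the support size $\sim N^{-1}$ (valid when $2^k>N$). The low-frequency range $2^k\le N$ is handled separately by the crude bound $\|\beta_k*f_N\|_\infty\le\|f_N\|_\infty\lesssim 2^{-N/8}$. This direct route is shorter and avoids the Fourier-support issue altogether.
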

\begin{proof}
We estimate $L_k f_N=\beta_k*f_N$. 
If $2^k\leq N$, since $\beta_k*f_{N}$ is compactly supported and  $\|\beta_k*f_{N}\|_\infty\lesssim \|f_N\|_\infty\lesssim 2^{-N/8}$, then
\[
\Big(\sum_{k=0}^{\log_2N}2^{kq}\|\beta_k*f_N\|_p^q\Big)^{1/q}\lesssim N\,2^{-N/8}\ll N^{1/q-1/p}.\]

Assume now that $2^k> N$.
First notice that the sets 
\Be
\label{suppgNj}
\supp \beta_k*g_{N,j} \subset \big\{\tfrac{2j}N+(-\tfrac2{N},\tfrac2{N})\big\}\times (0, 1)^{d-1},\quad \tfrac N8< j< \tfrac N4,\Ee are pairwise disjoint,
and thus 
\Be
\|\beta_k*f_N\|_p=\Big(\sum_{\frac N8< j< \frac N4}2^{-jp}\|\beta_k*(g_{N,j}  \otimes \chi)
\|^p_p\Big)^{1/p}.
\Ee
Next, we distinguish the cases $j\ge k$ and $j\le k$.
When $j\ge k$, if we integrate by parts $M$-times with respect to $y_1$ in the convolution we obtain 
\begin{align*}
\beta_k*(g_{N,j}  \otimes \chi)(x)
=
\int \frac{\partial^M}{\partial y_1} \big[
\beta_k(x-y) u(Ny_1-2j)\chi(y') \big] \frac{e^{2\pi i 2^j y_1}}
{(-2\pi i 2^j)^{M}}
 dy_1\,dy'
\end{align*} and thus, using that $N<2^k$, 
\[\|\beta_k*(g_{N,j}  \otimes \chi)\|_p \lc 2^{-(j-k)M} N^{-1/p}, \quad j\geq k.\]
For $N/8< j\le k$  we use the cancellation of the $\beta_k$ (with $M$ vanishing moments) to obtain
\[
\|\beta_k*(g_{N,j}  \otimes \chi)\|_p \lc 
2^{-kM}\,\|\partial^Mg_{N,j}\|_\infty \,N^{-\frac1p}\lesssim
2^{-(k-j)M} N^{-1/p}, \quad j\le k.\]

Thus 
\begin{align*}
&\Big(\sum_{2^k>N}2^{kq}\|\beta_k*f_N\|_p^q\Big)^{\frac1q}
= 
\Big(\sum_{2^k>N}2^{kq}\Big[\sum_{\frac N8<j<\frac N4}2^{-jp}
\|\beta_k*(g_{N,j}\otimes\chi)\|^p_p\Big]^\frac qp\Big)^{\frac1q}
\\
&\quad \lc 
\Big(\sum_{2^k>N}\Big[\sum_{\frac N8<j<\frac N4}2^{(k-j)p}  2^{-|j-k|Mp} 
\Big]^\frac qp\Big)^{\frac1q}\,N^{-\frac1p} \,\lc\, N^{\frac1q-\frac1p}. \qedhere
\end{align*}
\end{proof}

We now take $\Psi_N= 2^{Nd}\Psi(2^N\cdot)$ with $\Psi$ as in \S\ref{testfctsubsection}, and we shall prove that 
\Be 
\|\bbE_N f_N\|_{B^1_{p,\infty}} \gc 2^N\|\Psi_N* \bbE_N f_N\|_p\gtrsim 1.
\label{ENfNgtr1}
\Ee

Define  $\Theta:\bbR\to \bbR$ by 
\Be\Theta(t)= \int_{-\infty}^t \theta(s) ds \label{Theta}\Ee  with $\theta=\phi_0^{(2M)}$ as in 
\S\ref{testfctsubsection} 
and observe that $\Theta$ 
 is odd, supported in $(-\tfrac 18, \tfrac 18)$ and   $\int_{-\infty}^\infty \Theta(t) dt=0$.
In particular, \Be \label{Thetaneq0}\int_0^{1/8}|\Theta(t)|^p dt \neq 0.\Ee

Let $\bbE_N^{(1)} $ and $\bbE_N ^{(d-1)}$ be the dyadic averaging operators on $\bbR$ and $\bbR^{d-1}$, respectively. If we denote  $\theta_N= 2^N\theta(2^N\cdot)$, $N\geq1$, then we claim that
\Be\label{1Dredux}
\Psi_N* (\bbE_N [g_{N,j}\otimes\chi])(x_1,x') =  \theta_N * (\bbE_N^{(1)} g_{N,j}) (x_1),
\text{ for $x'\in(\tfrac13,\tfrac23)^{d-1}$.}
\Ee 
Indeed, for $x'\in(\tfrac13,\tfrac23)^{d-1}$ it is easily seen that
\begin{align*}
&2^{N(d-1)} \varphi_0(2^N\cdot)* \bbE_N^{(d-1)} \chi(x')= \int \varphi_0 (y')dy'=1,
\\
&2^{N(d-1)} \vartheta(2^N\cdot)* \bbE_N^{(d-1)} \chi(x')= \int \vartheta (y')dy'=0.
\end{align*}

The proof of the lower bound in \eqref{ENfNgtr1} will rely on the following lemma.

\begin{lemma} \label{auxiliarylem} Let $\nu \in \bbZ$  and $\widetilde I_{N,\nu}=  \big[\tfrac{\nu}{2^N},\tfrac{\nu+1/8}{2^N}\big]$. 
Then,  for every  $t\in \widetilde I_{N,\nu}$ and $N/8<j<N/4$ we have\Be
\theta_N* (\bbE_N^{(1)} g_{N,j})(t)=2^{-N}\, g_{N,j}'\big(\tfrac{\nu}{2^N}\big)
 \,\Theta(2^Nt-\nu)+ O( 2^{-2(N-j)}). \label{gNjp}
\Ee Moreover if $ \tfrac{\nu}{2^N}\in [\tfrac{2j}{N}+\tfrac 1{4N}, \tfrac{2j}{N}+\tfrac 3{4N}]$ then 
$g_{N,j}'(2^{-N}\nu)=2\pi i 2^j   e^{2\pi i 2^{j-N}\nu}$.
\end{lemma}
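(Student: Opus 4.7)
The plan is to exploit that $\bbE_N^{(1)} g_{N,j}$ is piecewise constant on the intervals $I_{N,\mu}=[\mu 2^{-N},(\mu+1)2^{-N})$, with values $a_\mu:=2^N\int_{I_{N,\mu}} g_{N,j}(s)\,ds$, and to combine this with the fact that $\supp\theta_N\subset[-2^{-N-3},2^{-N-3}]$, which forces only the contributions from $I_{N,\nu-1}$ and $I_{N,\nu}$ when $t\in\widetilde I_{N,\nu}$. Thus $\theta_N*(\bbE_N^{(1)} g_{N,j})(t)$ reduces to a linear combination of two integrals $\int_{I_{N,\mu}}\theta_N(t-s)\,ds$ that I would evaluate explicitly.

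First I would compute, via the substitution $u=2^N(t-s)$ and the primitive $\Theta$ introduced in \eqref{Theta}, that for $t\in\widetilde I_{N,\nu}$,
\[\int_{I_{N,\nu-1}}\theta_N(t-s)\,ds = -\Theta(2^Nt-\nu) \mand \int_{I_{N,\nu}}\theta_N(t-s)\,ds = \Theta(2^Nt-\nu),\]
using that $\Theta(-\tfrac18)=0$ from the support of $\theta$, and $\Theta(\tfrac18)=\int\theta=0$ since $\theta=\phi_0^{(2M)}$ with $\phi_0\in C^\infty_c$. This gives the exact identity $\theta_N*(\bbE_N^{(1)} g_{N,j})(t) = (a_\nu - a_{\nu-1})\,\Theta(2^Nt-\nu)$.

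Next I would Taylor-expand $g_{N,j}$ to second order around $\nu 2^{-N}$. The constant terms of $a_\nu$ and $a_{\nu-1}$ cancel, the linear terms combine to produce $2^{-N} g_{N,j}'(\nu 2^{-N})$, and the quadratic remainders are controlled by $\|g_{N,j}''\|_\infty\cdot 2^{-2N}$. Applying the product rule to $g_{N,j}(t)=e^{2\pi i 2^j t} u(Nt-2j)$ gives $\|g_{N,j}''\|_\infty\lc 2^{2j}+2^j N+N^2$, and since $j>N/8$ this is dominated by $2^{2j}$ for large $N$. Thus the remainder is of order $2^{-2(N-j)}$, which after multiplying by the bounded factor $\Theta(2^Nt-\nu)$ yields the first assertion.

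For the second assertion, the product rule gives $g_{N,j}'(t)=2\pi i 2^j e^{2\pi i 2^j t} u(Nt-2j) + N e^{2\pi i 2^j t} u'(Nt-2j)$. The hypothesis on $\nu$ is equivalent to $N\nu 2^{-N}-2j\in[\tfrac14,\tfrac34]$, the range on which $u\equiv1$ and hence $u'\equiv 0$, so evaluating at $t=\nu 2^{-N}$ produces the stated identity. I expect no serious obstacle here; the main subtlety is the accounting of $\Theta$ at the endpoints $\pm\tfrac18$ in the first step.
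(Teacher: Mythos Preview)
Your argument is correct and in fact cleaner than the paper's. The paper does not use the exact identity
\[\theta_N*(\bbE_N^{(1)} g_{N,j})(t)=(a_\nu-a_{\nu-1})\,\Theta(2^Nt-\nu)\]
that you derive; instead it splits $\theta_N*\bbE_N^{(1)} g_{N,j}=\theta_N*(\bbE_N^{(1)}-I)g_{N,j}+\theta_N*g_{N,j}$, bounds the second term by $O(2^{-2(N-j)})$ via the moment conditions on $\theta$, and then expands the first term by writing $\theta_N=2^N\Theta'(2^N\cdot)$ and inserting a Taylor expansion of the averaging error $\mint_{I_{N,\mu}}g_{N,j}-g_{N,j}(s)$. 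This produces two intermediate pieces $A_{1,j}$ and $A_{2,j}$; one integrates by parts to show $A_{2,j}=0$ (using $\int\Theta=0$), and $A_{1,j}$ is evaluated by explicitly differencing $\Theta$ at three points. Your route bypasses the $\pm I$ split and the integration by parts entirely: once you reduce to the jump $a_\nu-a_{\nu-1}$ the Taylor expansion is a single step. The only minor point is that your endpoint values of $\Theta$ are actually at arguments in $[-1,-7/8]$ and $[1,9/8]$, well outside $\supp\Theta\subset(-\tfrac18,\tfrac18)$, so the vanishing is automatic rather than a subtlety.
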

\begin{proof}
The last assertion is immediate by the definition of $g_{N,j}$ in \eqref{defgNj}. So we focus in proving \eqref{gNjp}.
We split \[
\theta_N*\bbE_N^{(1)} g_{N,j}  =  \theta_N*(\bbE_N^{(1)} -I) g_{N,j}  + \theta_N*g_{N,j}
\]
and observe that from the cancellation properties of $\theta_N$ we have
\[ \|\theta_N* g_{N,j}\|_\infty  \lc 2^{-2N}\|g''_{N,j}\|_\infty \lc 2^{-2N} (2^{2j}+N^2)\]
which for $N/8\le j\le N/4$ implies
$\|\theta_N* g_{N,j}\|_\infty  \lc 2^{-2(N-j)} .$
 Let $I_{N,\nu}=[2^{-N}\nu, 2^{-N}(\nu+1))$. 
For $t\in \widetilde I_{N,\nu}$ we have $\supp \theta_N(t-\cdot)\subset I_{N,\nu-1}\cup I_{N,\nu}$, so recalling \eqref{Theta} we obtain
 \begin{align*} 
 &\theta_N*(\bbE_N^{(1)} -I) g_{N,j}(t)\,= \, 
\\& \int 2^{N}\Theta'(2^N(t-s)) \times\Big[
 \bbone_{I_{N,\nu}}(s) \Big( \mint_{I_{N,\nu}} g_{N,j}(w) dw -g_{N,j}(s)\Big)
 \\ &\qquad\qquad\qquad\qquad\qquad
 +
 \bbone_{I_{N,\nu-1}}(s) \Big( \mint_{I_{N,\nu-1}} g_{N,j}(w) dw -g_{N,j}(s)\Big)\Big]\, ds
 \end{align*}
For $s\in I_{N,\nu}$, using Taylor expansions  one sees that
\begin{align*} & \mint_{I_{N,\nu}} g_{N,j}(w) dw -g_{N,j}(s)\\&=
\mint_{I_{N,\nu}}
\!\! g_{N,j}'(s)(w-s) dw + \mint_{I_{N,\nu}} \int_0^1 (1-\sigma) g''_{N,j}(s+\sigma(w-s))d\sigma \, (w-s)^2 dw
 \\&= g_{N,j}'(\tfrac\nu{2^N}) \frac1{2|I_{N,\nu}|}{\Big[(\tfrac{\nu+1}{2^N}-s)^2-(\tfrac\nu{2^N}-s)^2\Big]} + O(2^{2j-2N})
 \\
 &= g_{N,j}'(\tfrac\nu{2^N}) \,\Big[\tfrac{\nu+1/2}{2^N} -s\Big] + O(2^{2j-2N}).
 \end{align*}
Similarly, for $s\in I_{N,\nu-1}$, 
\[
 \mint_{I_{N,\nu-1}} g_{N,j}(w) dw -g_{N,j}(s)=
g_{N,j}'(\tfrac\nu{2^N}) \Big[\tfrac{\nu-1/2}{2^N} -s\Big]  + O(2^{2j-2N}).\]

 Hence 
 for $t\in \widetilde I_{N,\nu}$ we have
 \Be
2^N \theta_N*(\bbE_N^{(1)}-I) g_{N,j}(t)\,= \, A_{1,j}(t) +  A_{2,j}(t)  +O(2^{-2(N-j)})\label{A1j}\Ee
 where
 \[A_{1,j} (t)= g_{N,j}'(\tfrac\nu{2^N}) \int 2^N \Theta'(2^N(t-s)) \frac{1}{2^{N+1}}\big( \bbone_{I_{N,\nu}}(s)- \bbone_{I_{N,\nu-1}}(s) \big) \, ds\]
 and 
 \begin{align*}A_{2,j} (t)&= g_{N,j}'(\tfrac\nu{2^N}) \int  
 2^{N} \Theta'(2^N(t-s)) (\tfrac\nu{2^N}-s) \, ds\,.
 \end{align*}
Integration by parts yields (for $t\in \widetilde I_{N,\nu}$)
 \[A_{2,j} (t)= g_{N,j}'(2^{-N}\nu) \int 2^{N} \Theta(2^N(s-t))
\, ds=0\]
 since $\int \Theta(s) ds=0$. 
 To  compute  $A_{1,j}(t)$ we observe that 
 \begin{align*}
 &\int 2^N \Theta'(2^N(t-s)) \big( \bbone_{I_{N,\nu}}(s)- \bbone_{I_{N,\nu-1}}(s) \big) \, ds\\
 &= \Big[\int_{\frac\nu{2^N}}^{\frac{\nu+1}{2^N}}-\int^{\frac\nu{2^N}}_{\frac{\nu-1}{2^N}}\;\Big] \;
 \frac{d}{ds}\big(- \Theta(2^N(t-s)\big)\, ds
 \\
 &= - \Theta\big(2^N(t-\tfrac{\nu+1}{2^N})\big)
 + 2\Theta\big(2^N(t-\tfrac\nu{2^N})\big)
 -  \Theta\big(2^N(t-\tfrac{\nu-1}{2^N})\big).
\end{align*}
For $t\in \widetilde I_{N,\nu}$ we have $\Theta(2^N(t-2^{-N}(\nu\pm1)))=0$ and thus
\[ A_{1,j}(t) =\, 2^{-N}\,g_j'(\tfrac\nu{2^N})\Theta(2^Nt-\nu),\quad t\in \widetilde I_{N,\nu}.\]
Inserting these expressions into \eqref{A1j} we are led to \eqref{gNjp}.
 \end{proof}

We may now complete the proof of \eqref{ENfNgtr1}.
 Using  \eqref{1Dredux}, and the fact that, by \eqref{suppgNj}, the functions $\theta_N * (\bbE_N^{(1)} g_{N,j})$ are supported in the disjoint intervals $J_{N,j}:=\frac{2j}N+(-\frac2N,\frac2N)$, we have
 \begin{align*} 
&2^N\|\Psi_N* (\bbE_N f_N)\|_p \gc  2^N \Big(\sum_{\frac N8<j<\frac N4} 
\big\| \theta_N*g_{N,j}\big\|_{L^p(\bbR)}^p 2^{-jp} \Big)^{1/p}
\\
&\gc \Big( \!\!\sum_{\frac N8<j<\frac N4} 
 \sum_{\nu\colon\frac{\nu}{2^N}\in J_{N,j}}
 \Big[|2^{-j}g_{N,j}'(\tfrac\nu{2^N})|^p \int_{\widetilde I_{N,\nu}}\!\!
|\Theta(2^Nt-\nu)|^p dt-\frac{c2^{(j-N)p}}{2^{N}}\Big]\,\Big)^{1/p}
 \end{align*} 
using the previous lemma in the last step. Since by \eqref{Thetaneq0} $$\int_{\widetilde I_{N,\nu}}|\Theta(2^N t-\nu)|^p dt =  2^{-N} \int_0^{1/8} |\Theta(t)|^p dt \gc 2^{-N},$$
 we obtain, for sufficiently large $N$, 
 \begin{equation*}
 2^N\|\Psi_N* (\bbE_N f_N)\|_p 
 \gc \Big( \sum_{\frac N8<j<\frac N4} 
 \sum_{\nu\colon\frac{\nu}{2^N}\in J_{N,j}}
 2^{-N}(1-c'2^{-pN/2})\Big)^{1/p} \gc 1. 
 \end{equation*} 
This completes the proof of \eqref{ENfNgtr1}, which together with \eqref{L5.3}
establishes Theorem \ref{lowerbounds-s=1thm}, and therefore also Theorem \ref{equiv-s=1thm}.

\section{Necessary conditions for boundedness when $s\le 0$}
\label{leftdottedlines-sect}

\subsection{\it The case 
$1<p\le  \infty$, $s=1/p-1$, $q>1$}
In these cases the operator $\bbE_N$ is not bounded in $B^{1/p-1}_{p,q}$  
because characteristic functions of cubes do not belong to the dual space $(B^{1/p-1}_{p,q})^*=B^{1/p'}_{p', q'}$; see \S\ref{necconds=1/p}.
This also applies when $p=\infty$, since 
$(b^{-1}_{\infty,q})^*=B^{1}_{1, q'}$; see \cite[\S1.1.5]{RuSi96}.

\subsection{\it The case $p=\infty$, $s=-1$, $q\le 1$}
\label{Ss-1}
We shall show 
\Be\label{b-1inftyq}
\|\bbE_N\|_{b^{-1}_{\infty,q}\to B^{-1}_{\infty,q}}\gc N.
\Ee
To prove this  we  argue by duality and first note that
\Be\label{B11inftylowerbd} 
\|\bbE_N\|_{b^1_{1,\infty}\to B^1_{1,\infty}}\approx N.
 \Ee
Indeed by Theorem \ref{equiv-s=1thm} we have 
$\|\bbE_N\|_{B^1_{1,\infty}\to B^1_{1,\infty}}\approx N$
and the lower bound is obtained by testing $\bbE_N$ on the Schwartz-function $f_N$ as in \eqref{fNSchwartz} satisfying
$\|f_N\|_{b^1_{1,\infty}} =\|f_N\|_{B^1_{1,\infty}} \lc N^{-1}$ and $\|\bbE_N f_N\|_{B^1_{1,\infty}}\gc 1$, \cf.
\eqref{L5.3}, \eqref{ENfNgtr1}.

To establish \eqref{b-1inftyq}, since $\|\bbE_N\|_{b^{-1}_{\infty,q}\to B^{-1}_{\infty,q}}\geq \|\bbE_N\|_{b^{-1}_{\infty,q}\to B^{-1}_{\infty,1}}$, by  \eqref{B11inftylowerbd} it suffices 
to prove that 
\Be\label{q<1redux}
 \|\bbE_N\|_{b^{-1}_{\infty,q}\to B^{-1}_{\infty,1}}\gc \|\bbE_N\|_{b^1_{1,\infty}\to B^1_{1,\infty}}.
\Ee  
We use that $(b^{-1}_{\infty,q})^*= B^1_{1,\infty}$ for 
$q\le 1$; see 
\cite[2.5.1/Remark 7]{triebel_teubner}.
Then for  $f\in \cS$
\Be
\|\SE_N f\|_{B^1_{1,\infty}}
=\|\SE_N f\|_{(b^{-1}_{\infty,q})^*}=
\sup_{{g\in\cS}\atop{\|g\|_{b^{-1}_{\infty,q}}\leq1}}\big|\langle \SE_N f , g\rangle\big|.
\label{bstar}
\Ee
Now, for each $g\in \cS$, since $f=\sum_{j=0}^\infty L_j\La_j f$ in $\mathcal{S}$, we have
\begin{align*} 
&|\langle \SE_N f , g\rangle|=|\langle f , \SE_Ng\rangle|
\,\leq \sum_{j=0}^\infty\|\La_jf\|_{1}
\|L_j(\SE_Ng)\|_{\infty}
\\  &\,\lc \|f\|_{b^{1}_{1,\infty}}\|\SE_Ng\|_{B^{-1}_{\infty,1}}
\leq  
\|f\|_{b^1_{1,\infty}}\,\|\SE_N\|_{b^{-1}_{\infty,q}\to B^{-1}_{\infty,1}}\,
\|g\|_{b^{-1}_{\infty,q}}.
\end{align*}
Inserting this into \eqref{bstar} we arrive at 
\[\|\SE_N f\|_{B^1_{1,\infty}}\le 
\|\SE_N\|_{b^{-1}_{\infty,q}\to B^{-1}_{\infty,1}}\,
\|f\|_{b^1_{1,\infty}}\]
and hence \eqref{q<1redux}.

  \section{Density and approximation}\label{densityandapprsect}
  In this section we show two results regarding  approximation by linear combinations of Haar functions.
  The main results  in \S\ref{failureofdensitysect}
   are relevant to the formulation of Theorems 
   \ref{schauder-besov} and \ref{schauder-besov-local} which rule out the case  $s=1$. We shall also obtain a positive result about approximation for the spaces 
  $b^{1/p}_{p,\infty}$. 
  
\subsection{\it The case $s=1$}\label{failureofdensitysect}

We shall show that no strongly admissible enumeration of the Haar system can form a Schauder basis on $B^1_{p,q}(\bbR^d)$ if $\frac{d}{d+1} \leq p < 1$ and $q>0$. Moreover if in addition  $0<q \leq p$ we shall show that the Haar system is not dense in  $B^1_{p,q}(\bbR^d)$.
It seems plausible that this last assertion would continue to hold for all $0<q\leq\infty$, but we do not have a proof in this generality.

Let us start with an auxiliary result. It is well-known that 
\[
 \|f\|_{{B^s_{p,\infty}(\mathbb{R}^d)}}\approx \|f\|_p+\sum_{j=1}^d\sup_{|h|\le 1}\frac{\|\Delta^2_{he_j}f\|_p}{|h|^{s}}\,,\]
 for all $p\leq 1$ and $d(1/p-1) < s <2$, see \cite[2.6.1]{triebel2}. Below we show that a partial lower bound 
actually holds  for all
$0<s<2$, which allows to incorporate the endpoint $s=d(1/p-1)=1$ (i.e., $p=d/(d+1)$) to our later results.
\begin{proposition} Let $0<s<2$ and $0<p\leq 1$. Then 
\begin{equation}\label{one-side}
\|g\|_p+\sum_{j=1}^d\sup_{|h|\le 1}\frac{\|\Delta^2_{he_j}g\|_p}{|h|^{s}} \lesssim \|g\|_{B^s_{p,\infty}}
\end{equation}
holds for any function $g \in L^1(\bbR^d)$\,.
\end{proposition}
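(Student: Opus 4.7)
The bound is proven by a Littlewood--Paley decomposition and the $p$-triangle inequality, using only band-limited-function estimates valid for all $p>0$. Write $g=\sum_{k\ge 0} F_k$ with $F_k:=L_k\La_k g$, so that $\widehat{F_k}$ is supported in an annulus of radius $\approx 2^k$ and
\[
\|F_k\|_p\,\lesssim\, 2^{-ks}\|g\|_{B^s_{p,\infty}}
\]
by the local-means characterization \eqref{Bspq_localmeans}. Since $s>0$, the $p$-triangle inequality gives $\|g\|_p^p\le \sum_k\|F_k\|_p^p\lesssim \|g\|_{B^s_{p,\infty}}^p$, disposing of the first term on the left of \eqref{one-side}.

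For the second-difference term, fix $h\in(0,1]$ and $j\in\{1,\ldots,d\}$, and choose $k_0\in\bbN_0$ with $2^{-k_0-1}<h\le 2^{-k_0}$. The $p$-triangle inequality applied to the split $g=\sum_{k\le k_0}F_k+\sum_{k>k_0}F_k$ yields
\[
\|\Delta^2_{he_j}g\|_p^p\;\le\; \sum_{k\le k_0}\|\Delta^2_{he_j}F_k\|_p^p\,+\,\sum_{k>k_0}\|\Delta^2_{he_j}F_k\|_p^p.
\]
For the high-frequency tail I would use the trivial pointwise bound $|\Delta^2_{he_j}F_k|\le 4|F_k|$, whence this part is controlled by $\sum_{k>k_0}\|F_k\|_p^p\lesssim 2^{-spk_0}\|g\|_{B^s_{p,\infty}}^p\approx h^{sp}\|g\|_{B^s_{p,\infty}}^p$. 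For the low-frequency piece I integrate twice,
\[
\Delta^2_{he_j}F_k(x)=\int_0^h\!\!\int_0^h\partial_j^2 F_k(x+(s+t)e_j)\,ds\,dt,
\]
and then dominate the pointwise supremum over $|\tau|\le 2h\le 2^{1-k}$ by the Peetre maximal function $\fM^{**}_{A,k}(\partial_j^2 F_k)$, which for $A>d/p$ is bounded in $L^p$ by $\|\partial_j^2 F_k\|_p$. Bernstein's inequality for band-limited functions (valid for all $p>0$) gives $\|\partial_j^2 F_k\|_p\lesssim 2^{2k}\|F_k\|_p$, so that $\|\Delta^2_{he_j}F_k\|_p\lesssim h^{2}\,2^{2k}\|F_k\|_p$. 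Using $s<2$, summing the resulting geometric series in $k\le k_0$ produces $h^{2p}\,2^{(2-s)pk_0}\|g\|_{B^s_{p,\infty}}^p\approx h^{sp}\|g\|_{B^s_{p,\infty}}^p$. Dividing by $h^{sp}$ and taking the supremum over $|h|\le 1$ and $j$ concludes the proof.

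I do not expect any serious obstacle here: every ingredient is a standard Bernstein or Peetre estimate on functions with compactly supported Fourier transform, and the argument is essentially a quantitative restatement of the fact that the Littlewood--Paley pieces are smooth at their own scale. The point of interest in the proposition is that the bound extends down to the left endpoint $s=d(1/p-1)$ (in particular $p=d/(d+1)$, $s=1$), which is excluded from the usual two-sided equivalence \cite[2.6.1]{triebel2}. This is possible precisely because only the upper bound $\|F_k\|_p\lesssim 2^{-ks}\|g\|_{B^s_{p,\infty}}$ is required, and no reverse estimate expressing $\|F_k\|_p$ in terms of moduli of smoothness is needed.
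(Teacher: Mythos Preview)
Your argument is correct and follows essentially the same route as the paper: a Littlewood--Paley decomposition $g=\sum_k g_k$, the $p$-triangle inequality, the trivial bound $\|\Delta^2_{he_j}g_k\|_p^p\lesssim\|g_k\|_p^p$ for $2^k\gtrsim|h|^{-1}$, and a Peetre maximal function estimate yielding $\|\Delta^2_{he_j}g_k\|_p\lesssim (2^k|h|)^2\|g_k\|_p$ for $2^k\lesssim|h|^{-1}$. The only cosmetic difference is that the paper obtains the low-frequency bound by writing $\Delta^2_{he_j}g_k=2^{kd}K_{2^khe_j}(2^k\cdot)*g_k$ for a Schwartz kernel $K_{ve_j}=\Delta^2_{ve_j}\varphi$ with $|K_{ve_j}|\lesssim v^2(1+|\cdot|)^{-2N}$, whereas you integrate twice and invoke Bernstein for $\partial_j^2F_k$; both devices produce the same factor $(2^k|h|)^2$ and the same Peetre control.
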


\begin{proof} Let $\hpsi_0\in C^\infty_c(\SR^d)$ supported in $\{|\xi|<3/8\}$ and with $\hpsi_0(\xi)=1$ if $|\xi|\leq 1/4$, and let $\hpsi_k(\xi)=\hpsi_0(2^{-k}\xi)-\hpsi_0(2^{-k+1}\xi)$ if $k\geq1$. Consider a  standard dyadic frequency decomposition
  $g=\sum_{k=0}^\infty g_k$,  with $g_k=\psi_k*g$, which converges in $L^1$ and also a.e.
 Since \[(\sum_{k=0}^{\infty} \|g_k\|_p^p)^{1/p} \lesssim \|g\|_{B^s_{p,\infty}}\]
  we also have $\|g\|_p \lesssim \|g\|_{B^s_{p,\infty}}$. In addition, 
  for each $0<|h|\leq1$, using the trivial estimate
  $\|\Delta_{he_j}  ^2 g_k\|_p^p\le 4 \|g_k\|_p^p$, we see that
  \begin{align*} 
  \frac{\|\Delta^2_{he_j}\sum_{2^k\geq |h|^{-1}}
  g_k\|_p}{|h|^{s}}\,
  &\leq
  \Big(4\!\!\sum_{2^k\ge |h|^{-1}} (2^k |h|)^{-sp} \|g_k\|_p^p 2^{ksp}\Big)^{1/p}
  \\&\lc \sup_{2^k |h|\ge 1} 2^{ks} \|g_k\|_p.
  \end{align*}
 Let $\vphi\in\cS$ be such that $\hphi(\xi)=1$ if $|\xi|\leq 1$.
For every  $0<v<1$, we let $K_{ve_j}=\Delta^2_{ve_j}\vphi$, so that
  $$\widehat K_{ve_j}(\xi) =(e^{2\pi i\inn{v e_j}{\xi}}-1)^2 \hat\varphi(\xi).$$
  Then $K_{ve_j}$ is  a Schwartz function and we have the estimate 
  $$|K_{ve_j}(x)|\le C_N v^2 (1+|x|)^{-2N},$$
for a large $N>d$.  Hence,  for each $k\geq0$ such that $2^k |h|<1$, we have
  \begin{align*}& |\Delta_{he_j} ^2 g_k (x) |= 2^{kd} |K_{2^khe_j} (2^k\cdot)* g_k(x)|
  \\&\le C_N (2^k|h|)^2 \int 2^{kd} (1+2^k|y|)^{-2N} |g_k(x-y)| dy
  \\& \lc C_N (2^k|h|)^2 \sup_{y\in \bbR^d}  (1+2^k|y|)^{-N} |g_k(x-y)|. 
  \end{align*}
 Choosing $N> d/p$ we can apply the Peetre maximal function estimate
 to obtain 
 \begin{align*}
  &\frac{\|\Delta^2_{he_j}\sum_{2^k<|h|^{-1}}
  g_k\|_p}{|h|^{s}}\,
  \leq 
  \Big(\sum_{2^k< |h|^{-1}} (2^k |h|)^{-sp} \|\Delta_{he_j}^2  g_k\|_p^p 2^{ksp}\Big)^{1/p}
\\
  &\lc 
  \Big(\sum_{2^k< |h|^{-1}} (2^k |h|)^{(2-s)p} \| g_k\|_p^p 2^{ksp}\Big)^{1/p}
  \lc \sup_{k\ge 0} 2^{ks} \|g_k\|_p.
  \end{align*}
 Combining the two estimates yields the result.
\end{proof}

\begin{remark} The appropriate analogue for $B^s_{p,q}(\bbR^d)$-quasinorms 
for  $q<\infty$,
i.e.
\[\|g\|_p +\sum_{j=1}^d \Big(\int_{-1}^1 \frac{\|\Delta^2_{he_j}g\|_p^q}{|h|^{sq} }\frac{dh}{|h|} \Big)^{1/q} \lc \|g\|_{B^s_{p,q}}
\]
remains valid (when $0<s<2$) but is not relevant in this section.
\end{remark}

The following proposition is a modification of 
our argument in  \cite[Proposition 4.2]{gsu}. It shows the necessity of the condition $s<1$ in part (ii) of  Theorem \ref{schauder-besov}  and part (iii) of Theorem \ref{schauder-besov-local}.

\begin{proposition} 
\label{besovdensity}
There exists a Schwartz function $f$ supported in $(\frac 1{16}, \frac{15}{16})^d$ such that
for all $0<p\leq 1$  it holds
\Be
\label{liminf}\liminf_{N\to \infty} \|\bbE_N f -f\|_{B^1_{p,\infty}}>0.\Ee
\end{proposition}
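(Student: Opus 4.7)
The plan is to apply the one-sided difference bound \eqref{one-side} to $g = \bbE_N f - f$ and to exhibit a single Schwartz $f$ on which, at a well-chosen scale $h_N$, the resulting ratio stays bounded below as $N \to \infty$. The explicit choice is
$$f(x) = x_1\,\chi(x),$$
where $\chi \in C^\infty_c(\bbR^d)$ satisfies $\chi \equiv 1$ on $[\tfrac18,\tfrac78]^d$ and $\supp \chi \subset (\tfrac{1}{16},\tfrac{15}{16})^d$. This $f$ is $C^\infty_c$ (hence Schwartz), supported in the required open cube, and, crucially, it is linear in $x_1$ on the bulk $[\tfrac18,\tfrac78]^d$. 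I will test \eqref{one-side} at $h_N = 2^{-N-2}$ and in the direction $e_1$, splitting the second difference of $\bbE_N f - f$ into a smooth part and a jump part.

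For the smooth part, $|\Delta^2_{h_N e_1} f(x)| \lesssim h_N^2 \|\partial_1^2 f\|_\infty$ has fixed compact support, so
$$\|\Delta^2_{h_N e_1} f\|_p \,\lesssim\, h_N^2 \,=\, 2^{-2N-4}.$$
Moreover $\Delta^2_{h_N e_1} f \equiv 0$ on $[\tfrac18,\tfrac78 - 2h_N]\times[\tfrac18,\tfrac78]^{d-1}$ by the linearity of $f$ in $x_1$ there.

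The jump part carries the main contribution. For $N$ large, every dyadic cube $I_{N,\nu} \subset (\tfrac18,\tfrac78)^d$ lies entirely in $\{\chi = 1\}$, so $\bbE_N f(x) = 2^{-N}(\nu_1 + \tfrac12)$ on $I_{N,\nu}$. Hence, viewed as a function of $x_1$ for each fixed $x'$, $\bbE_N f$ is a step function on $[\tfrac18,\tfrac78]$ with jumps of size exactly $\delta = 2^{-N}$ at each dyadic point $k\,2^{-N}$. A direct case analysis (depending on whether $x_1, x_1+h, x_1+2h$ lie in one or two consecutive intervals) shows that for any one-dimensional step function $g$ with step size $\delta$ and any $h \in (0, \delta/2)$, the second difference $\Delta^2_h g$ equals the signed jump $\pm(c_\nu - c_{\nu-1})$ on two intervals of length $h$ just before and just after each jump, and vanishes elsewhere. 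Applied to $\bbE_N f$ with $h = h_N = 2^{-N-2}$, this produces $\sim 2^N$ such pairs of intervals within $[\tfrac18, \tfrac78]$, each of $x_1$-length $h_N$, on which $|\Delta^2_{h_N e_1}\bbE_N f| = 2^{-N}$; after integrating in $x' \in [\tfrac18,\tfrac78]^{d-1}$ the resulting set has $d$-dimensional Lebesgue measure bounded below by some $c_d > 0$ uniformly in $N$. Consequently
$$\|\Delta^2_{h_N e_1}\bbE_N f\|_p^p \,\gtrsim\, 2^{-Np}.$$

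Combining the two pieces via the $p$-triangle inequality ($p \leq 1$),
$$\|\Delta^2_{h_N e_1}(\bbE_N f - f)\|_p^p \,\ge\, \|\Delta^2_{h_N e_1}\bbE_N f\|_p^p - \|\Delta^2_{h_N e_1} f\|_p^p \,\gtrsim\, 2^{-Np} - C\, h_N^{2p},$$
which is $\gtrsim 2^{-Np}$ for $N$ sufficiently large, since $h_N^{2p} = 2^{-(2N+4)p} \ll 2^{-Np}$. Dividing by $h_N = 2^{-N-2}$ and invoking \eqref{one-side} yields $\|\bbE_N f - f\|_{B^1_{p,\infty}} \gtrsim 1$ uniformly in $N$ (with an implicit constant depending on $p$), which proves \eqref{liminf}. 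The only delicate point in the argument is the case analysis for the second difference of a step function and verifying that the good set has measure bounded below independently of $N$; this is routine, and the smooth correction is dominated by a factor of $2^N$, so no genuine obstacle is expected.
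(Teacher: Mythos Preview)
Your proof is correct and follows essentially the same approach as the paper: the same test function $f(x)=x_1\chi(x)$ with $\chi\equiv 1$ on $[\tfrac18,\tfrac78]^d$, the same use of the one-sided inequality \eqref{one-side}, the same test scale $h_N=2^{-N-2}$ in the $e_1$-direction, and the same splitting into the smooth part $\Delta^2_{h_Ne_1}f=O(h_N^2)$ and the jump part coming from the step structure of $\bbE_Nf$. The paper carries out the case analysis for $\Delta^2_{h_Ne_1}\bbE_Nf$ by writing down the explicit formulas for the first and second differences, whereas you describe it verbally, but the content is identical.
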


\begin{proof}  
 Let $\eta\in C^\infty_c(\SR^d)$ be supported in $(\frac 1{16}, \frac{15}{16})^d$ with $\eta(x)=1$ if $x\in (1/8, 7/8)^d$.
Let $f(x)=x_1\,\eta(x)$. 
From \eqref{one-side}  we have 
\[\big\| \bbE_N f-f
 \big\|_{B^1_{p,\infty}} \gc 
\sup_{0<h\le 1}\frac{\big \|\Delta^2_{he_1}
\bbE_N f- \Delta^2_{he_1}f
\big \|_p}{h}\,,\]
Clearly, since $f$ is a Schwartz function, 
\[
 \|\Delta^2_{he_1}f
\big \|_p \lc |h|^2, \quad {0<h\le 1}.
\]
So, by an appropriate  triangle inequality,  it suffices to show that 
\Be \label{lowerbdlargeN}
\frac{\big \|\Delta^2_{2^{-N-2}e_1}
\bbE_N f
\big \|_p}{2^{-N-2}}\, \ge c>0,
\Ee 
for large $N$. We now recall a calculation  in \cite[Prop 4.2]{gsu}. 
Let $N>10$ and let $h\in (0,1/4)$. 
An explicit calculation shows that for $x\in (1/4,3/4)^d$
 \[
\SE_Nf(x)=\sum_{2^{N-2}\leq k<3\cdot 2^{N-2}}\tfrac{k+1/2}{2^N}\bbone_{[\frac{k}{2^N},\frac{k+1}{2^N})\times [0,1)^{d-1}}(x).\]
Then, under the additional assumption $0<h<2^{-N-1}$,  \[
\Delta_{he_1}\SE_Nf(x)=2^{-N-1}\sum_{2^{N-2}\le k\le 3\cdot 2^{N-2}}\bbone_{[\frac{k}{2^N}-h,\frac{k}{2^N})\times [0,1)^{d-1}}(x),
\] 
and
\begin{multline*}
\Delta^2_{he_1}\SE_Nf(x)=\\2^{-N-1}\!\!\sum_{2^{N-2}<k<3\cdot 2^{N-2}}\big(\bbone_{[\frac{k}{2^N}-2h,\frac{k}{2^N}-h)\times [0,1)^{d-1}}(x)-\bbone_{[\frac{k}{2^N}-h,\frac{k}{2^N})\times [0,1)^{d-1}}(x)\big).
\end{multline*}
Therefore, 
\[
\|\Delta^2_{he_1}\SE_Nf\|_{L^p([0,1]^d)}\gc 2^{N(1/p-1)}\,h^{1/p},
\]
and in particular
\[
\|\Delta^2_{2^{-N-2} e_1}\SE_Nf\|_{L^p([0,1]^d)}\gc 2^{-N},
\]
which implies  \eqref{lowerbdlargeN}.
\end{proof}

Finally, we conclude with the non-density result mentioned above.
\begin{corollary} \label{failureofdensitycor} Let $\frac{d}{d+1}\le p<1$, $0<q\le p$. Then $\Span \sH_d $ is not dense 
in $B^1_{p,q}(\SR^d)$.
\end{corollary}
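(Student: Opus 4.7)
The strategy is a standard Banach--Steinhaus style argument, combining the negative result in Proposition \ref{besovdensity} with the positive (uniform boundedness) result of Theorem \ref{basic-sequence-besov}. We argue by contradiction: assume that $\Span\sH_d$ is dense in $B^1_{p,q}(\bbR^d)$, and fix the specific Schwartz function $f$ supplied by Proposition \ref{besovdensity}, which satisfies
\[
c_0:=\liminf_{N\to\infty}\|\bbE_N f-f\|_{B^1_{p,\infty}}>0.
\]
Note $f\in B^1_{p,q}$, so by the density hypothesis, for every $\eps>0$ we may choose $h_\eps\in\Span\sH_d$ with $\|f-h_\eps\|_{B^1_{p,q}}<\eps$.

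The key observation is that any such $h_\eps$ is a \emph{finite} linear combination of Haar functions $h^{\e}_{k,\mu}$, all with $k\le K_\eps$ for some $K_\eps\in\bbN$; since $h^{\e}_{k,\mu}$ is constant on dyadic cubes of sidelength $2^{-k-1}$, we have $\bbE_N h_\eps=h_\eps$ for every $N\ge K_\eps+1$. Consequently, for such $N$,
\[
\bbE_N f - f \,=\, \bbE_N(f-h_\eps) - (f-h_\eps).
\]
The second ingredient is the uniform boundedness of $\bbE_N$ on $B^1_{p,q}(\bbR^d)$, which holds in the full range $\frac{d}{d+1}\le p<1$, $0<q\le p$: this is case (iv) of Theorem \ref{basic-sequence-besov} when $\frac{d}{d+1}<p<1$, and case (vi) of that theorem (with $s=1=d/p-d$) when $p=\frac{d}{d+1}$. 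Denote $C:=\sup_N\|\bbE_N\|_{B^1_{p,q}\to B^1_{p,q}}<\infty$.

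Combining these and using the trivial embedding $B^1_{p,q}\hookrightarrow B^1_{p,\infty}$ together with the $p$-triangle inequality in the quasi-Banach space $B^1_{p,\infty}$ (valid since $p<1$), we obtain for all $N\ge K_\eps+1$,
\[
\|\bbE_N f-f\|_{B^1_{p,\infty}}^p
\le \|\bbE_N(f-h_\eps)\|_{B^1_{p,q}}^p + \|f-h_\eps\|_{B^1_{p,q}}^p
\le (C^p+1)\,\eps^p.
\]
Taking $\liminf_{N\to\infty}$ gives $c_0^p\le (C^p+1)\eps^p$, and letting $\eps\to 0$ yields $c_0=0$, contradicting the conclusion of Proposition \ref{besovdensity}. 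Hence $\Span\sH_d$ cannot be dense in $B^1_{p,q}(\bbR^d)$.

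The argument is short and the only nontrivial inputs are the two previously established results; there is no real obstacle here, only the need to check that the uniform boundedness of $\bbE_N$ on $B^1_{p,q}$ genuinely covers both the edge $\frac{d}{d+1}<p<1$ and the vertex $p=\frac{d}{d+1}$ of the pentagon $\fP$, which is what dictates the hypothesis $\frac{d}{d+1}\le p<1$, $0<q\le p$ in the statement.
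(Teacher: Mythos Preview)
Your proof is correct and follows essentially the same approach as the paper's: both combine Proposition \ref{besovdensity} with the uniform boundedness of $\bbE_N$ from Theorem \ref{basic-sequence-besov}, and use that $\bbE_N h=h$ for $h\in\Span\sH_d$ once $N$ is large. The only cosmetic difference is that the paper first passes to the $B^1_{p,q}$ norm via the embedding $B^1_{p,q}\hookrightarrow B^1_{p,\infty}$ before estimating, whereas you keep the $B^1_{p,\infty}$ norm on the left and use the embedding on the right; your explicit remark that the vertex case $p=\tfrac{d}{d+1}$ is covered by case (vi) of Theorem \ref{basic-sequence-besov} is a helpful clarification.
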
 
\begin{proof} By Proposition \ref{besovdensity} and  $B^1_{p,q}\hookrightarrow B^1_{p,\infty}$ we have, for some $f\in C^\infty_c$, 
\Be\label{failureofconvpq}
\liminf\limits_{N \to \infty} \|\bbE_N f-f\|_{B^1_{p,q}}=c>0,\Ee 
By Theorem \ref{basic-sequence-besov} the operators $\bbE_N$ are uniformly  bounded on  $B^1_{p,q}$. For $h\in \Span\sH_d $ we have $\SE_Nh=h$ for  $N\ge N_0(h)$, with sufficiently large $N_0(h)$.
Hence 
\[ 
\|\bbE_N f-f\|_{B^1_{p,q}} \lesssim 
\|\bbE_N [f-h] \|_{B^1_{p,q}} +
\|f-h \|_{B^1_{p,q}}  \lesssim \|f-h\|_{B^1_{p,q}} , \text{ for   $N\ge N_0(h)$},
\]
 and the density of $\Span\sH_d $  
in $B^s_{p,q}$ would yield a contradiction to \eqref{failureofconvpq}.
\end{proof}
\begin{remark}
When $d/(d+1)\leq p<1$, it follows from Theorem \ref{basic-sequence-besov}.iv (or vi), and from the results in \S\ref{localizationsect} below, that each strongly admissible enumeration $\cU$ of $\sH_d$ is a \emph{basic sequence} for $B^1_{p,p}(\SR^d)$, that is, $\cU$ is a Schauder basis for the subspace \[
{\overline{\Span 
\sH_d}}^{B^1_{p,p}}.\] 
It may be of interest to identify this subspace.
By  Oswald's  result in \cite{oswald}, it contains the class  ${\sB}^1_{p,p, (1)}(\SR^d) $
defined by first order differences.
\end{remark}

\subsection{\it An approximation result for $b^{1/p}_{p,\infty}$ when  $1\!<\!p\!<\!\infty$}
\label{positiveresultondensity}

In the limiting case $s=1/p$, recall that $\sH_d$ is contained in $B^{1/p}_{p,q}$ only if $q=\infty$.
We show an approximation result in this case when $1<p<\infty$. Recall that $B^{s}_{p,\infty}$ is not separable, and that $b^{s}_{p,\infty}$ denotes the closure of $\cS$ in $B^{s}_{p,\infty}$. Recall also, from Proposition 
\ref{intersectzero}, that  $b^{1/p}_{p,\infty}\cap\Span\sH_d=\{0\}$. However taking  closures one obtains

\begin{proposition}\label{prop_dens}
Let $1<p\le \infty$. Then  
\Be\label{inclusionproper}
b^{1/p}_{p,\infty}(\SR^d) \subsetneq 
\overline{\text{\rm span}(\sH_d)}^{B^{1/p}_{p,\infty}}\,.\Ee
\end{proposition}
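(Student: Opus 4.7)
First I would note that strictness in \eqref{inclusionproper} is an immediate consequence of Proposition \ref{intersectzero}: any nonzero Haar function $h^{\varepsilon}_{k,\mu}$ belongs trivially to $\Span(\sH_d)\subset\overline{\Span(\sH_d)}^{B^{1/p}_{p,\infty}}$ but is excluded from $b^{1/p}_{p,\infty}$. The task thus reduces to establishing the inclusion $b^{1/p}_{p,\infty}\subseteq\overline{\Span(\sH_d)}^{B^{1/p}_{p,\infty}}$. For this step I would first observe that $C^\infty_c(\SR^d)$ is dense in $b^{1/p}_{p,\infty}$: $\cS$ is dense by definition of $b^{1/p}_{p,\infty}$, and Schwartz cutoffs $g\mapsto g\chi_R$ converge to $g$ in the Schwartz topology and hence in the $B^{1/p}_{p,\infty}$ norm.

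The plan is then to approximate each $g\in C^\infty_c(\SR^d)$ by the dyadic averages $\bbE_N g$. These are compactly supported, and by the martingale decomposition
\[
\bbE_N g = \bbE_0 g + \sum_{k=0}^{N-1}(\bbE_{k+1}-\bbE_k)g,
\]
together with \eqref{martdiff}, each $\bbE_N g$ is a finite linear combination of Haar functions. Everything therefore reduces to showing $\bbE_N g \to g$ in $B^{1/p}_{p,\infty}$. The key quantitative estimate I would prove is
\[
\|\bbE_N g - g\|_{B^{1/p}_{p,\infty}} \lesssim 2^{-N\delta}\,\|g\|_{B^{1/p+\delta}_{p,\infty}}
\]
for any fixed $0<\delta<1-\tfrac1p$; since $g\in\cS$ lies in every Besov space, the right-hand side tends to $0$ as $N\to\infty$.

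To prove the decay estimate I would decompose $\bbE_N g - g = (\bbE_N-I)\Pi_N g + \bbE_N(I-\Pi_N)g - (I-\Pi_N)g$ as in \eqref{SENPiN}, and apply Corollary \ref{Upscor} with $s=1/p$ to each term. The extra smoothness of $g$ yields $2^{j/p}\|\La_j g\|_p \leq 2^{-j\delta}\|g\|_{B^{1/p+\delta}_{p,\infty}}$, supplying the $j$-decay needed to control the geometric sums. The main obstacle is the bookkeeping across the four regimes $(j,k)\lessgtr N$ for the quantities $U_{p,1/p}(j,k,N)$ from \eqref{BjkN-pge1}: in each regime one must verify that the sum in $j$ combines with the remaining $k$- and $N$-dependent factors of $U_{p,1/p}$ to leave, after taking the supremum over $k$, a uniform factor $2^{-N\delta}$. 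The delicate regime is $j,k\leq N$, where $U_{p,1/p}(j,k,N)$ contains the factor $2^{j(1-1/p)}$ and the partial sum $\sum_{j\leq N}2^{j(1-1/p-\delta)}$ grows like $2^{N(1-1/p-\delta)}$; this calculation requires $\delta<1-\tfrac1p$, which is precisely where the hypothesis $p>1$ enters.
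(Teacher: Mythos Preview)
Your proposal is correct and takes a genuinely different route from the paper. The paper's proof works directly with the first-difference characterization $\|f\|_{B^{1/p}_{p,\infty}}\approx\|f\|_p+\sup_{h\neq0}|h|^{-1/p}\|\Delta_h f\|_p$ (valid for $1<p<\infty$): it approximates $f\in C^1_c$ by the sampling function $f_N=\sum_{I\in\sD_N}f(c_I)\bbone_I$ and estimates $\|\Delta_h(f-f_N)\|_p$ by splitting into the boundary region $\cU_{N,j}$ (where $|h|\approx 2^{-j}$) and its complement, obtaining the explicit rate $\|f-f_N\|_{B^{1/p}_{p,\infty}}\lesssim 2^{-N(1-1/p)}$; the case $p=\infty$ is handled by the one-line embedding $\|f-f_N\|_{B^0_{\infty,\infty}}\lesssim\|f-f_N\|_\infty$. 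Your approach instead recycles the $L_k\bbE_N L_j$ machinery of Corollary \ref{Upscor}: it is more systematic, treats $p=\infty$ on the same footing as $p<\infty$ (since $\delta<1$ is still available), and fits the paper's overall methodology, at the cost of the slightly weaker rate $2^{-N\delta}$ for $\delta<1-\tfrac1p$. Both arguments identify $p>1$ as the natural threshold, and both derive strictness from Proposition \ref{intersectzero}.
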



\begin{proof} Let $1<p<\infty$.
In view of \cite[2.5.12]{Tr83}, we may use the equivalent norm
\[
\|f\|_{B^{1/p}_{p,\infty}}=\|f\|_p+\sup_{h\not=0}\frac{\|\Dt_h f\|_p}{|h|^{1/p}}.
\]

By dimensional considerations it is clear that the characteristic function of any dyadic cube $I$  of side length $2^{-k}$ can be written as a unique linear combination of Haar functions of frequency at most $2^{k-1}$ supported in the dyadic unit cube containing $I$.
It  therefore suffices to show 
that for every $f\in C^1_c(\SR^d)$ we have $\|f-f_N\|_{B^{1/p}_{p,\infty}}\to 0$,
where we choose\[
f_N=\sum_{I\in\sD_N} f(c_I)\bbone_I,
\]
and $c_I$ denotes the center of $I$. Let $\cI=\cI(f)$ be the family of $I\in \cD_N$ which intersect the support of $f$.
Clearly for $f\in C^1_c$ we have 
\Be\label{f-fNLp}
\|f-f_N\|_p\le  \sqrt{d} 2^{-N} 2^{-Nd/p}\|f'\|_\infty (\#\cI(f))^{1/p}
\lc_f 2^{-N}
\Ee
so that
$\|f-f_N\|_p\to 0$ for $N\to \infty$. For the main term it suffices to show that 
\Be
\sup_{h\not=0}\frac{\|\Dt_h (f-f_N)\|_p}{|h|^{1/p}} \lc 2^{-N(1-\frac 1p)},
\label{recall}
\Ee
and recall that we are assuming $p>1$.

For $j> N$ we define the sets
\Be
\cU_{N,j} =\Big\{(y_1,\ldots,y_d)\in\SRd\mid \min_{1\leq i\leq d}\dist (y_i, 2^{-N}\bbZ)\le 2^{-j-1}\Big\}.\label{UNj}
\Ee
Assume that $2^{-j-2}\leq|h|_\infty < 2^{-j-1}$, for some $j>N$.
If $I\in\sD_N$ then
\[
x\in I\setminus \cU_{N,j} \quad\mbox{implies} \quad x+h\in I,\]
and thus $\Dt_h f_N(x)=0$. So we have
\[
\|\Dt_h(f-f_N)\|^p_p  = A_{N}(h)+B_{N}(h)\]
where
\begin{align*}
A_N(h)&=\int_{\cU_{N,j}}|\Delta_h[f_N-f](x)|^p dx,
\\
B_N(h)&=\int_{\cU_{N,j}^\complement}|f(x+h)-f(x)|^p\,dx\;.
\end{align*}

In the second term we use $|\Dt_hf(x)|\leq|h|\int_0^1|\nabla f(x+sh)|ds$ to obtain
$B_N(h)\;\leq\; \|\nabla f\|_p^p\,|h|^p$ and thus 
\[
\sup_{|h|< 2^{-N-2}} B_N(h)/|h|\lc_f 2^{-N(p-1)}.
\] 

For the term $A_N(h)$  we use that  $\|f-f_N\|_\infty\leq C_f 2^{-N}$, and also that $f$ is compactly supported, and obtain the estimate
\Beas
A_N(h) &  \lesssim & \sum_{I\in\sD_N}\int_{I\cap\cU_{N,j}}|f(x+h)-f_N(x+h)|^p+|f(x)-f_N(x)|^p\,dx\,
\\
& \lesssim_f &  2^{-Np} 2^{N}2^{-j},
\label{prop_a1}
\Eeas
since $|I\cap\cU_{N,j}|\approx 2^{-j}2^{-(d-1)N}$.
 Hence $\sup_{|h|\approx 2^{-j}}  A_N(h)/|h| \lc_f 2^{-N(p-1)}.$
Putting the two estimates together we get 
\[
\sup_{|h|_\infty\le 2^{-N-2} } \frac{\|\Dt_h(f-f_N)\|_p}{|h|^{1/p}}
\lc_f 2^{-N(1-\frac 1p)}.
\]

Finally, if $|h|\gtrsim 2^{-N}$ we use \eqref{f-fNLp} to have
\[\sup_{|h|_\infty\ge 2^{-N}}
\frac{\|\Dt_h(f-f_N)\|_p}{|h|^{1/p}}\lesssim \frac{2\|f-f_N\|_p}{2^{-N/p}}
\lesssim_f 2^{-N(1-\frac1p)}.
\]
This shows \eqref{recall} and therefore $\|f-f_N\|_{B^{1/p}_{p,\infty}}\to 0$ for $p>1$, completing the proof of the inclusion
\eqref{inclusionproper} when $p<\infty$.
The case $p=\infty$ is immediate since for $f\in C^1_c$
$$\|f-f_N\|_{B^0_{\infty,\infty}}\lc \|f-f_N\|_\infty\lc 2^{-N}$$
by an elementary consideration. Finally, since 
$b^{1/p}_{p,\infty}(\SR^d) $ is closed in
$B^{1/p}_{p,\infty}(\SR^d) $ Proposition \ref{intersectzero}
tells us that the inclusion \eqref{inclusionproper} is proper.
\end{proof}

\begin{remark}
\label{dense_bs}
When $0<p\leq 1$, the same proof gives a version of \eqref{recall}, namely
\[
\sup_{h\not=0}\frac{\|\Dt_h (f-f_N)\|_p}{|h|^s} \lc 2^{-N(1-s)},\quad \mbox{if }\;s<1.
\]
This can be used similarly to show that $\sH_d$ is dense in the space $b^s_{p,\infty}$ when $d(1/p-1)<s<1$ and $d/(d+1)<p\leq 1$.
\end{remark}
  
  \begin{remark} Since $B^{1/p}_{p,\infty}$ is not separable, not every function $f\in B^{1/p}_{p,\infty}$ can be approximated by Haar expansions in the norm topology.  However, (local) weak$^*$ convergence does hold, with norm-uniformly bounded partial sums. More precisely, if $1<p\leq\infty$ and $\chi\in C^\infty_c(\SR^d)$, then 
\[
\big(f-S^\cU_R f\big)\chi\stackrel{{\rm w}^*}{\longrightarrow} 0 ,\mand \sup_{R\geq1}\big\|\chi\,S^\cU_R f\big\|_{B^{1/p}_{p,\infty}}\lesssim \|f\|_{B^{1/p}_{p,\infty}} 
\]
for all $f\in B^{1/p}_{p,\infty}$ and any strongly admissible enumeration.
This is a consequence of the duality relation $B^{1/p}_{p,\infty}= (B^{-1/p'}_{p',1})^*$ and the (local) norm convergence 
of  $S^\cU_R g\to g$ in the $B^{-1/p'}_{p',1}$ norm, when $g\in B^{-1/p'}_{p',1}$,  see Theorem \ref{schauder-besov-local}. 
We thank the referee for raising the question of weak* convergence.  
  \end{remark}
\section{Partial sums and localization}\label{localizationsect}

\subsection{\it Partial sums and strongly admissible enumerations}\label{SS91} We shall use a partition of unity to make statements on the structure of the partial sum operators $S_R^\cU$ associated with a strongly admissible enumeration $\cU$.

Let $\varsigma\in C^\infty_c$ be supported in a 
 $10^{-2}$ neighborhood of $[0,1)^d$ and so that 
 \Be\label{spacelocalization}
 \sum_{\nu\in \bbZ^d}\varsigma(\cdot-\nu)\equiv 1.\Ee
We shall denote 
 $\varsigma_\nu=\varsigma(\cdot-\nu)$, $\nu\in \bbZ^d$. 

In the sequel we will use the notation from Definition \ref{strongly-adm} and below. It is convenient to denote $\SE_{-1}(g)\equiv0$ and $T_{-1}[g,\fa]=\sum_{\mu\in\SZ^d}a_\mu\langle g,h^{\vec0}_{0,\mu}\rangle h^{\vec0}_{0,\mu}$.
 
\begin{lemma}  \label{basicdec} 
Let $\cU$ be a strongly admissible enumeration of $\sH_d $. Then, for every $R\in\SN$ and $\nu\in \bbZ^d$ there is an integer
$N_\nu=N_\nu(R)\ge -1$ and sequences $\fa^{\ka,\nu}$, $0\leq \kappa\leq b$,
whose terms belong to $\{0,1\}$,
such that for all locally integrable functions $g$ we have
\Be
S_R^\cU [g\varsigma_\nu]  = \bbE_{N_\nu}[g\varsigma_\nu] +
\sum_{\kappa=0}^{b} T_{N_\nu+\ka}[
g\varsigma_\nu,\fa^{\ka,\nu}].
\label{SET}
\Ee
\end{lemma}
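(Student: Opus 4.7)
The plan is to argue in three steps. First I would localize which Haar functions can possibly contribute to $S_R^\cU[g\varsigma_\nu]$. Since $\supp(\varsigma_\nu)$ lies in the $10^{-2}$-neighborhood of $I_\nu$, and since each $u\in\sH_d$ has support in a dyadic cube of side length $\le 1$ contained in a single unit cube of $\bbZ^d$, the only unit cubes that can host a Haar function $u$ with $\langle g\varsigma_\nu,u\rangle\ne 0$ are $\mu+[0,1)^d$ with $\mu\in\nu+\{-1,0,1\}^d$, all of which sit inside $I^{**}_\nu=\nu+[-2,3]^d$. Hence $u_n^*(g\varsigma_\nu)=0$ whenever $\supp(u_n)\not\subset I^{**}_\nu$, and setting $\cJ_\nu(R):=\{n\le R:\supp(u_n)\subset I^{**}_\nu\}$ one has
\[S_R^\cU[g\varsigma_\nu]=\sum_{n\in\cJ_\nu(R)}u_n^*(g\varsigma_\nu)\,u_n.\]

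Next I would extract the frequency structure from strong admissibility. If $\cJ_\nu(R)=\emptyset$, then \eqref{SET} is trivial with $N_\nu=-1$ and all $\fa^{\kappa,\nu}\equiv 0$. Otherwise let $k^*:=\max\{k(n):n\in\cJ_\nu(R)\}$, attained at some $n_0\le R$. For every Haar function $u$ supported in $I^{**}_\nu$ with $k(u)\le k^*-b$ one has
\[|\supp(u)|=2^{-k(u)d}\ge 2^{bd}\cdot 2^{-k^*d}=2^{bd}|\supp(u_{n_0})|,\]
so the strong admissibility hypothesis with constant $b$, applied to the pair $(u,u_{n_0})$, forces $u$'s index in $\cU$ to be strictly less than $n_0\le R$, and hence $u\in\{u_n:n\in\cJ_\nu(R)\}$. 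Thus every Haar function supported in $I^{**}_\nu$ of frequency-index $\le k^*-b$ appears \emph{fully} in the partial sum, while only those at frequency-indices $k^*-b+1,\ldots,k^*$ may be \emph{partially} present.

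To assemble \eqref{SET}, set $N_\nu:=\max(k^*-b,-1)$. By the Haar expansion of $\bbE_{N_\nu}$, together with the observation (from the first step) that Haar functions whose support is not contained in $I^{**}_\nu$ pair trivially with $g\varsigma_\nu$, the fully-present contributions of frequency-index $<N_\nu$ (together with the $h^{\vec 0}_{0,\mu}$ generators, if $N_\nu\ge 0$) reassemble exactly to $\bbE_{N_\nu}[g\varsigma_\nu]$; the convention $\bbE_{-1}\equiv 0$ covers the boundary case $k^*<b$. The remaining indices in $\cJ_\nu(R)$ lie at the $b+1$ frequency levels $N_\nu,N_\nu+1,\ldots,N_\nu+b$, so for each $\kappa\in\{0,\ldots,b\}$ I would let $\fa^{\kappa,\nu}$ be the $\{0,1\}$-valued sequence that selects precisely those Haar functions of frequency-index $N_\nu+\kappa$ lying in $\{u_n:n\in\cJ_\nu(R)\}$. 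Then $\sum_{\kappa=0}^{b}T_{N_\nu+\kappa}[g\varsigma_\nu,\fa^{\kappa,\nu}]$ reproduces the partial-frequency part of $S_R^\cU[g\varsigma_\nu]$, completing \eqref{SET}. The principal technical point is the second step: one must simultaneously juggle support containment, enumeration order, and the support-size comparison, and the strong admissibility hypothesis is precisely what caps the number of partial-contribution frequency levels uniformly at $b+1$.
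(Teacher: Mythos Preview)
Your proof is correct and follows essentially the same route as the paper's: both localize to Haar functions supported in $I^{**}_\nu$, identify the maximal frequency index $k^*$ occurring among $\{u_n:n\le R\}$ with support in $I^{**}_\nu$, and then invoke strong admissibility to conclude that all levels $\le k^*-b$ are fully present while at most $b+1$ levels remain partial. The only cosmetic difference is that the paper sets $N_\nu=K_\nu-b+1$ (placing the fully-present level $K_\nu-b$ inside $\bbE_{N_\nu}$), whereas you set $N_\nu=\max(k^*-b,-1)$ (placing that level into the $T_{N_\nu}$ term with $\fa^{0,\nu}\equiv 1$); both choices yield the decomposition \eqref{SET}.
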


\begin{proof} 
We write \Be\label{SRformula}S_R^\cU[g\varsigma_\nu]= 
\sum_{n=1}^R u_n^*(g\varsigma_\nu) u_n=
\sum_{n=1}^R 2^{k(n) d} 
\inn {g\varsigma_\nu}{h^{\ep(n)}_{k(n),\mu(n)} }h^{\ep(n)}_{k(n),\mu(n)}.\Ee
Note that if $u_n^*(g\varsigma_\nu)\neq 0$ then necessarily $u_n$ is supported in $I_\nu^{**}$.
Let 
$$ K_\nu = \max \big\{k(n):  \supp (h^{\ep(n)}_{k(n),\mu(n)}) \subset I^{**}_\nu,\;n=1,\dots, R\big\}.$$
If $K_\nu\le b$  the asserted formula holds with $N_\nu=-1$. We therefore may assume $K_\nu>b$.

We let $n_\nu^*\in  [1,R]$  such that  $k(n_\nu^*)=K_\nu$.
Now if   $h_{k',\mu'}^{\ep'}$
 is any other Haar function supported in $I^{**}_\nu$ there is a unique 
$n'\in \bbN$ such that 
 $h_{k',\mu'}^{\ep'}= h_{k(n'),\mu(n')}^{\ep(n')}
 $. If in addition 
 $k'\le K_\nu-b$  (in other words if for  $u_{n'}= h_{k(n'),\mu(n')}^{\ep(n')}$ 
 we have that $|\supp(u_{n'})|\ge |\supp(u_{n_\nu^*})| 2^b$)  then by the admissibility condition
 we must have $n'\le n^*_\nu$, in particular $n'\le R$. That means that all Haar functions with frequency $2^k$ and $k\le K_\nu-b$  which are supported in $I_\nu^{**}$ arise in the expansion \eqref{SRformula}.  All other Haar functions that arise in this expansion have frequencies $2^k$ with
 $K_\nu-b+1\le k\le K_\nu$. This establishes the assertion with $N_\nu= K_{\nu}-b+1$. The  functions $\fa^{\ka,\nu}$ defined on $\bbZ^d\times \Upsilon$ take values in $\{0,1\}$.
   \end{proof}
	\emph{Remark.}
	Formula \eqref{SET} can be extended to all $g\in B^s_{p,q}$, when the indices $(s,p,q)$ are as in Theorems
	\ref{basic-sequence-besov} and \ref{p=inftythm}. In that case, one must interpret
	\[
	S_R^{\cU}(g)=\sum_{j=0}^\infty S_R^\cU\big(L_j\La_j g);
	\]
	see Remarks \ref{R_ENp<1} and \ref{R_ENp>1}.

	\begin{proposition}\label{P_supEN}
	Suppose that
	\Be
	\sup_{N\geq0}\,\|\SE_N\|_{B^s_{p,q}\to B^s_{p,q}}\,+\,\sup_{{N\geq-1}\atop{\|\fa\|_\infty\leq1}}\|T_N[\cdot,\fa]\|_{B^s_{p,q}\to B^s_{p,q}}<\infty.
	\label{supEN}
	\Ee
	Then, for every strongly admissible enumeration $\cU$ and every cube $Q$ it holds
	\Be
	\sup_{R\in \bbN} \;\rhoop\big(S^\cU_R, B^s_{p,q}, Q\big)<\infty.
	\label{OpSQ}
	\Ee
	Moreover, $\cU$ is a local basic sequence of $B^s_{p,q}(\SR^d)$, that is
	\Be
	\lim_{R\to\infty}\big\|\chi\cdot(S_R^\cU f-f)\big\|_{B^s_{p,q}}=0,
	\label{limchiSR}
	\Ee
	for all $\chi\in C^\infty_c(\SR^d)$ and all $f\in{\overline{\Span\sH_d}}^{B^s_{p,q}}$.
\end{proposition}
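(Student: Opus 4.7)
The plan is to combine the localization provided by Lemma \ref{basicdec} with the pointwise multiplier property of $C^\infty_c$ functions on $B^s_{p,q}(\bbR^d)$ (valid for every $s\in\bbR$ and $0<p,q\leq\infty$, by the local means characterization) in order to transfer \eqref{supEN} to the partial sums $S_R^\cU$, and then to execute the standard basis-extension argument in the local setting. Throughout, set $p_0=\min\{1,p,q\}$ and use the $p_0$-quasi-triangle inequality for $\|\cdot\|_{B^s_{p,q}}$.

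For \eqref{OpSQ}, I would first record that Lemma \ref{basicdec}, combined with \eqref{supEN} and the multiplier estimate for $\varsigma_\nu$, yields
\[
 \|S_R^\cU[g\varsigma_\nu]\|_{B^s_{p,q}}\;\lesssim\; \|g\varsigma_\nu\|_{B^s_{p,q}}\;\lesssim\;\|g\|_{B^s_{p,q}},
\]
uniformly in $R\in\bbN$ and $\nu\in\bbZ^d$. Given $f$ supported in a cube $Q$ of side length $\geq 1/2$, I decompose $f=\sum_{\nu\in\Gamma_Q}f\varsigma_\nu$, where $\Gamma_Q:=\{\nu\in\bbZ^d:\supp\varsigma_\nu\cap Q\neq\emptyset\}$ is finite with $\#\Gamma_Q\lesssim |Q|$, and sum via the $p_0$-quasi-triangle inequality, obtaining $\rhoop(S_R^\cU,B^s_{p,q},Q)\lesssim_Q 1$.

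For \eqref{limchiSR}, I would use a standard approximation argument. Given $\chi\in C^\infty_c(\bbR^d)$ and $f$ in the $B^s_{p,q}$-closure of $\Span\sH_d$, I pick $h\in\Span\sH_d$ close to $f$ and split
\[
 \chi(S_R^\cU f-f)=\chi S_R^\cU(f-h)+\chi(S_R^\cU h-h)+\chi(h-f).
\]
The middle term vanishes for all $R$ large enough, since $h$ is a finite Haar combination. The third term is bounded by $C_\chi\|f-h\|_{B^s_{p,q}}$ by the multiplier property of $\chi$. For the first term I would invoke the critical observation, read off from the proof of Lemma \ref{basicdec}, that $S_R^\cU[g\varsigma_\nu]$ is supported in a bounded dilate of $I_\nu^{**}$ independent of $R$; after writing $f-h=\sum_\nu(f-h)\varsigma_\nu$, only the finitely many $\nu$ whose enlarged cubes meet $\supp\chi$ contribute to $\chi S_R^\cU(f-h)$. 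The $p_0$-quasi-triangle inequality together with the local bound from the previous paragraph then gives $\|\chi S_R^\cU(f-h)\|_{B^s_{p,q}}\lesssim_\chi\|f-h\|_{B^s_{p,q}}$. Choosing $h$ first so that $\|f-h\|_{B^s_{p,q}}$ is small, and then $R$ large so that $S_R^\cU h=h$, concludes \eqref{limchiSR}.

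The main conceptual point, and the step that I expect to require the most care, is the $R$-independent support localization of $S_R^\cU[g\varsigma_\nu]$ inherited from the structure $\SE_{N_\nu}+\sum_\kappa T_{N_\nu+\kappa}[\cdot,\fa^{\kappa,\nu}]$ in Lemma \ref{basicdec}; once this is secured, the uniform bound \eqref{supEN} effectively turns the potentially global operator $\chi S_R^\cU$ into a finite sum of uniformly controlled local pieces, and the rest is a routine density argument.
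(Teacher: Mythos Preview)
Your proposal is correct and follows essentially the same approach as the paper. For \eqref{OpSQ} your argument coincides with the paper's (which just says the bound follows from Lemma \ref{basicdec} and \eqref{supEN}). For \eqref{limchiSR} the paper packages the localization slightly differently: instead of re-decomposing $f-h$ into $\varsigma_\nu$-pieces and invoking the support property of $S_R^\cU[g\varsigma_\nu]$, it picks a single $\tchi\in C^\infty_c$ equal to $1$ on a cube containing $\supp\chi$, observes $\chi\cdot S_R^\cU[g]=\chi\cdot S_R^\cU[\tchi g]$ (since Haar functions meeting $\supp\chi$ are supported where $\tchi\equiv1$), and then applies the already-proved bound \eqref{OpSQ} directly to $\tchi(f-h)$. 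Both routes are the same localization idea; the paper's is marginally cleaner because it reuses \eqref{OpSQ} rather than redoing the $\varsigma_\nu$ summation.
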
	
  \begin{proof}
	Using Lemma \ref{basicdec}, the bound in \eqref{OpSQ} follows from \eqref{supEN}.
	We now show the last assertion. Let $\chi\in C^\infty_c(\SR^d)$ and $f\in{\overline{\Span\sH_d}}^{B^s_{p,q}}$.
	Suppose that $\supp\chi\subset(-N,N)^d$, and pick any $\tchi\in C_c^\infty$ such that $\tchi\equiv 1$ in $[-N,N]^d$ and $\supp\tchi$ contained in $Q:=(-2N,2N)^d$. Observe that 
\Be
u^*_n(g)\,=\,u^*_n(\tchi\,g),\quad \mbox{if $g\in B^s_{p,q}$ and $\supp u_n\subset[-N,N]^d$},
\label{aux_unchi}
\Ee
so we also have 
\Be
\chi\cdot S_R^\cU [g] 
=\chi\cdot S_R^\cU[\tchi g],\quad\forall\;g\in B^s_{p,q}.
\label{aux_SRchi}
\Ee
Given $\e>0$, let $h\in\Span\sH_d$ be such that $\|f-h\|_{B^s_{p,q}}<\e/(1+A)$, with $A$ the constant in 
\eqref{OpSQ}. Let $R_0=R_0(h)$ be such that $S^\cU_R[h]=h$ for $R\geq R_0$. Then, for all such $R$ we have 
\Beas
\big\|\chi\cdot(S_R[f]-f)\big\|_{B^s_{p,q}} & = & \
\big\|\chi\cdot\big(S_R[f-h] +h- f\big)\big\|_{B^s_{p,q}}\\
& \lesssim & \big\|\chi\cdot S_R[\tchi (f-h)]\big\|_{B^s_{p,q}} +\big\|\chi\cdot(h- f)\big\|_{B^s_{p,q}}\\
& \lesssim & (A+1)\,
\big\|f- h\big\|_{B^s_{p,q}}<\e,\Eeas
where in the second line we have used \eqref{aux_SRchi} with $g=f-h$.
	\end{proof}

   \subsection{\it Bourdaud localizations of Besov spaces}\label{bourdaudsect}

In the unbounded setting of $\SR^d$,  the $B^s_{p,q}$-norms do not satisfy ``localization properties'' when $p\not=q$; see  e.g. the discussion in \cite[p. 66]{RuSi96}. 
At the endpoint cases considered here, this creates a difficulty when trying to derive `global' Schauder basis properties from the local ones in the previous subsection. This difficulty 
is not present in the case of $F^s_{p,q}$ spaces; see \cite{gsu, gsu-endpt}.

To handle this problem one may consider the class of $\ell^p$-local Besov spaces introduced by G. Bourdaud \cite{bourdaud}
\Be
\label{loc-B}
\lBs=\Big\{f\in S'\mid \|f\|_{\lBs}=\Big[\sum_{\nu\in\SZd}\|\vsig(\cdot-\nu)\cdot f\|_{B^s_{p,q}}^p\Big]^{1/p}<\infty\Big\}
\Ee
where $\vsig\in C^\infty_c(\SR^d)$ with $\sum_{\nu\in\SZd}\vsig(\cdot-\nu)\equiv1$ as in 
\eqref{spacelocalization}. In \cite{bourdaud} (see also \cite[2.4.7]{triebel2}) it is shown that this definition does not depend on the particular choice of $\varsigma$, and that $\lBs=B^s_{p,q}$ if and only if $p=q$. Moreover  one has the embeddings
\begin{align}
&B^s_{p,q}\hookrightarrow\lBs \,  \mbox{ if } 0<q\leq p,\label{Bloc1}
\\
&\lBs\hookrightarrow B^s_{p,q} \,\mbox{ if } p\leq q\leq \infty.\label{Bloc2}
\end{align}
Using this notation we can prove the following.

\begin{theorem} \label{schauderB-bourdaud:d/p-d}
Let $s\in\SR$ and $0<p,q\leq \infty$. Suppose that \eqref{supEN} holds.
	Then,  every strongly admissible enumeration $\cU$ of $\sH_d$ is a basic sequence of $\big(B^s_{p,q}\big)_{\ell^p}$.
Moreover, $\cU$ is a Schauder basis of $\big(B^s_{p,q}\big)_{\ell^p}$ in each of the cases (i) to (iv) in Theorem \ref{schauder-besov-local}.
\end{theorem}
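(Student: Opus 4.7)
My plan is to reduce the theorem to two ingredients: (a) the uniform operator bound $\sup_R\|S_R^\cU\|_{\lBs\to\lBs}<\infty$, and (b) density of $\Span\sH_d$ in $\lBs$ for the four cases listed in Theorem \ref{schauder-besov-local}. From (a) alone the basic sequence conclusion follows via the standard approximation argument, since for every $h\in\Span\sH_d$ one has $S_R^\cU h=h$ for all sufficiently large $R$. The Schauder basis claim then requires the additional density (b).

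For (a), I would start from $f=\sum_{\nu\in\bbZ^d}\varsigma_\nu f$ and apply Lemma \ref{basicdec} to each summand, so that by the hypothesis \eqref{supEN} one obtains $\|S_R^\cU[\varsigma_\nu f]\|_{B^s_{p,q}}\lesssim\|\varsigma_\nu f\|_{B^s_{p,q}}$ uniformly in $\nu,R$. The essential locality input is that $\SE_N$ and the partial Haar projections $T_N[\cdot,\fa]$ each preserve support up to a shift by one dyadic cube of scale $2^{-N}$; hence $\supp S_R^\cU[\varsigma_\nu f]\subset I_\nu^{**}$ for all $R$, so that $\varsigma_\mu\cdot S_R^\cU[\varsigma_\nu f]\neq 0$ only for $\nu$ in a finite set $\cN(\mu)$ of cardinality uniformly bounded in $\mu$. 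Since smooth compactly supported functions are pointwise multipliers of $B^s_{p,q}$ for every $(s,p,q)$, and the multiplier norm of $\varsigma_\mu=\varsigma(\cdot-\mu)$ is independent of $\mu$ by translation invariance, a $p$-triangle inequality would yield
\begin{equation*}
\|\varsigma_\mu S_R^\cU f\|_{B^s_{p,q}}^p\,\lesssim\,\sum_{\nu\in\cN(\mu)}\|\varsigma_\nu f\|_{B^s_{p,q}}^p.
\end{equation*}
Summing in $\mu$, with each $\nu$ appearing in only boundedly many $\cN(\mu)$, then gives (a).

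For (b), since $p,q<\infty$ in all four cases, any $f\in\lBs$ can be approximated in $\lBs$-norm by its truncation $f_M=\sum_{|\nu|\le M}\varsigma_\nu f$, using the $\ell^p$-summability in the definition \eqref{loc-B}. On the subspace of functions supported in a fixed compact set, the $\lBs$ and $B^s_{p,q}$ quasinorms are equivalent, so the problem reduces to approximation of compactly supported $B^s_{p,q}$-elements by finite Haar expansions. In the interior ranges (i) and (iii) this is the classical theorem of Triebel; in the endpoint ranges (ii) and (iv) it can be extracted from Oswald's endpoint density results on $(0,1)^d$ mentioned in Remark \ref{R_oswald}, together with the material developed in \S\ref{densityandapprsect} (in particular Remark \ref{dense_bs} for the $p\le 1$ case). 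I expect the main obstacle to be precisely this endpoint density in cases (ii) and (iv); step (a) itself is essentially organized bookkeeping built on top of the estimates already proved in Sections \ref{upbdsple1sect}--\ref{upbdspge1sect}, with no new analytic input required.
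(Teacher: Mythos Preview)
Your approach for part (a) is essentially identical to the paper's: both use Lemma \ref{basicdec} on each localized piece $\varsigma_\nu f$, invoke the hypothesis \eqref{supEN}, and exploit the finite overlap $\varsigma_{\nu'}S_R^\cU[\varsigma_\nu f]=0$ for $|\nu-\nu'|_\infty$ large together with the uniform multiplier bound for the $\varsigma_\mu$.

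For part (b) the paper follows the same outline---truncate to compact support, then approximate by Haar expansions---but organizes the second step differently. Rather than citing external density results, the paper invokes Proposition \ref{P_supEN}: once density of $\Span\sH_d$ in $B^s_{p,q}$ is known, that proposition gives $\|g-S_R g\|_{B^s_{p,q}}\to 0$ directly for compactly supported $g$, and then one only needs to transfer this to the $\lBs$-norm via the trivial bound $\|h\|_{\lBs}\lesssim |Q|^{1/p}\|h\|_{B^s_{p,q}}$ for $h$ supported in $Q$. This is cleaner than appealing to outside sources.

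Your justification of density in the endpoint cases (ii) and (iv) is where the proposal is imprecise. Remark \ref{dense_bs} explicitly requires $d(1/p-1)<s<1$, so it does \emph{not} cover the endpoint $s=d(1/p-1)$ needed in case (iv); and Oswald's results in Remark \ref{R_oswald} concern a different scale of spaces. The correct route, implicit in the paper, is that the same estimates in \S\ref{upbdsple1sect}--\ref{upbdspge1sect} which yield \eqref{supEN} also give $\|\SE_N f-f\|_{B^s_{p,q}}\to 0$ for Schwartz $f$ in all four cases (the bounds in Propositions \ref{jk>N}--\ref{j>NkleNp>1} involve either tails $\sum_{j>N}$ or weighted sums $\sum_{j\le N}2^{(j-N)\delta p}a_j^p$ with $a_j\to 0$, both of which vanish as $N\to\infty$). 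Since $\SE_N f\in\Span\sH_d$ for $f\in C^\infty_c$, density follows. You should replace your external citations with this internal argument.
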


%
%
%
%

 \begin{proof}
For the first assertion it suffices to show that the operator norms of $S_R\equiv S_R^\cU$ in $\lBs$ are uniformly bounded in $R$. To do so we use the
 assumption \eqref{supEN}, together with Lemma \ref{basicdec}.

 Observe first that $\varsigma_{\nu'} S_R (f\varsigma_\nu)=0$ whenever $|\nu-\nu'|_\infty\geq3$. 
 Hence
 \begin{align*}
 \big\|S_R f\big\|_{\lBs} & =  \Big(\sum_{\nu'}\big\|\varsigma_{\nu'} S_R\big(\sum_{\nu}\varsigma_\nu f\big)\big\|_{B^s_{p,q}}^p\Big)^\frac1p
\\&\lesssim \Big( \sum_{\nu'} \sum_{\nu\,:\,|\nu-\nu'|_\infty\leq2}\Big\| 
  \varsigma_{\nu'}S_R(f\varsigma_\nu)\Big\|_{B^s_{p,q}}^p \Big)^{1/p}
 \\
 &\lesssim \Big( \sum_{\nu} \big\| S_R(f\varsigma_\nu)\big\|_{B^s_{p,q}}^p \Big)^{1/p},
 \end{align*}
using in the last step that $\varsigma_{\nu'}$ is a uniform multiplier in $B^s_{p,q}$; see \cite[4.2.2]{triebel2}.
Then Lemma  \ref{basicdec}  and \eqref{supEN} 
give
 \Beas
 \big\|S_R f\big\|_{\lBs} &\lesssim & \Big( \sum_{\nu} \big\| \bbE_{N_\nu}(f\varsigma_\nu)\big\|_{B^s_{p,q}}^p 
+
\big\| \sum_{\kappa=0}^b T_{N_\nu}[f\varsigma_\nu, \fa^{\ka,\nu}]\big\|_{B^s_{p,q}}^p \Big)^{1/p} 
 \\
 &\lc_b &
 \Big( \sum_{\nu} \big\|f\varsigma_\nu\big\|_{B^s_{p,q}}^p \Big)^{1/p}  =\, \|f\|_{\lBs}.
 \Eeas
This shows the first part. 
Also, the Schauder basis property will hold if and only if $\Span\sH_d$ is dense in $\lBs$.

We now show that density holds in the range of Theorem \ref{schauder-besov-local}. Since $p<\infty$, for each $f\in\lBs$
and $\e>0$ there is some $g\in B^s_{p,q}$ with compact support such that $\|f-g\|_{\lBs}<\e$. Moreover, in the asserted range $\Span\sH_d$ is dense in $B^s_{p,q}$, so 
if $\supp g\subset(-N,N)^d=Q$, then by Proposition \ref{P_supEN} we may find a sufficiently large $R$ such that $\|g-S_Rg\|_{B^s_{p,q}}<\e/|Q|^{1/p}$. Since also $\supp (S_Rg)\subset Q$ we deduce that
\[
\|g-S_Rg\|_{\lBs}\lesssim |Q|^{1/p}\| g-S_Rg\|_{B^s_{p,q}}<\e,
\]
which completes the proof.
  \end{proof}

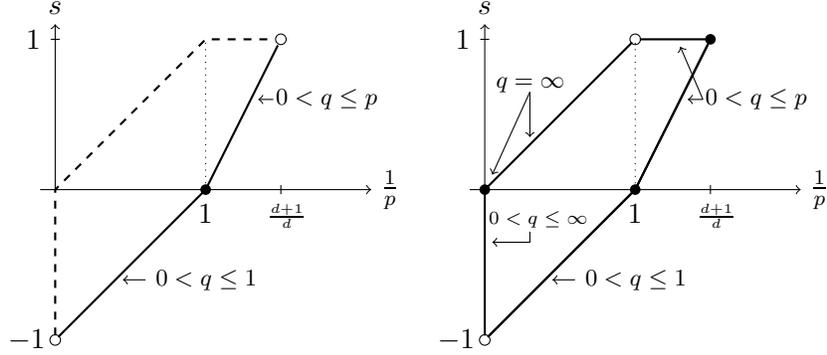
\begin{figure}[ht]
 \centering
\subfigure
{\begin{tikzpicture}[scale=2]
\draw[->] (-0.1,0.0) -- (2.1,0.0) node[right] {$\frac{1}{p}$};
\draw[->] (0.0,0.0) -- (0.0,1.1) node[above] {$s$};

\draw (1.0,0.03) -- (1.0,-0.03) node [below] {$1$};
\draw (1.5,0.03) -- (1.5,-0.03) node [below] {{\tiny $\;\;\frac{d+1}{d}$}};
\draw (0.03,1.0) -- (-0.03,1.00) node [left] {$1$};

\draw[dotted] (1.0,0.0) -- (1.0,1.0);

\draw[dashed, thick] (0.0,-0.97) -- (0.0,0.0) -- (1.0,1.0) -- (1.45,1.0);

\draw[thick] (1.48,0.96) -- (1.0,0.0);
\fill (1,0) circle (1pt);
\draw (1.5,1) circle (1pt);
\draw[<-, thin] (1.35,0.6)--(1.45,0.6);
\node [right] at (1.4,0.6) {{\footnotesize $0<q\leq p$}};

\draw[thick] (0.02,-0.98) -- (1.0,0.0);
\draw (0,-1) circle (1pt) node [left] {$-1$};
\draw[<-, thin] (0.45,-0.6)--(0.6,-0.6);
\node [right] at (0.6,-0.6) {{\footnotesize $0<q\leq 1$}};

\end{tikzpicture}
}
\subfigure
{\begin{tikzpicture}[scale=2]
\draw[->] (-0.1,0.0) -- (2.1,0.0) node[right] {$\frac{1}{p}$};
\draw[->] (0.0,-0.97) -- (0.0,1.1) node[above] {$s$};

\draw (1.0,0.03) -- (1.0,-0.03) node [below] {$1$};
\draw (1.5,0.03) -- (1.5,-0.03) node [below] {{\tiny $\;\;\frac{d+1}{d}$}};
\draw (0.03,1.0) -- (-0.03,1.00) node [left] {$1$};

\draw[dotted] (1.0,0.0) -- (1.0,1.0);
\draw[thick] (0.0,-0.97) -- (0.0,0.0) -- (0.98,0.98);
\draw[thick] (1.03,1) -- (1.5,1.0) -- (1.0,0.0) --
(0.02,-0.98);

\draw[thick] (1.48,0.96) -- (1.0,0.0);
\fill (1,0) circle (1pt);
\fill (1.5,1) circle (1pt);
\draw (1,1) circle (1pt);
\draw[<-, thin] (1.35,0.6)--(1.45,0.6);
\node [right] at (1.4,0.6) {{\footnotesize $0<q\leq p$}};
\draw[<-, thin] (1.3,.95)--(1.45,0.6);

\draw[thick] (0.02,-0.98) -- (1.0,0.0);
\draw (0,-1) circle (1pt) node [left] {$-1$};
\draw[<-, thin] (0.45,-0.6)--(0.6,-0.6);
\node [right] at (0.6,-0.6) {{\footnotesize $0<q\leq 1$}};

\node [left] at (0.6,0.7) {{\footnotesize $q=\infty$}};
\draw[->, thin] (0.3,0.65)--(0.3,0.35);
\draw[->, thin] (0.3,0.65)--(0.05,0.1);
\fill (0,0) circle (1pt);

\node [right] at (-0.04,-0.2) {{\tiny $0<q\leq \infty$}};
\draw[->, thin] (0.3,-0.28)--(0.3,-0.35)--(0.05,-0.35);
\end{tikzpicture}
}
\caption{The left  caption shows the region in which strongly admissible enumerations form a Schauder basis of the Bourdaud localization $(B^s_{p,q})_{\ell^p}\;$ ; here always $q<\infty$. The right caption shows the corresponding region for the basic sequence property.
}
\label{fig4}
\end{figure}

Finally, we gather as a corollary the positive Schauder results in the original scale of $B^s_{p,q}$ spaces. 

\begin{corollary}
Every strongly admissible enumeration $\cU$ of $\sH_d$ is a Schauder basis of $B^s_{p,q}(\SR^d)$ in each of the cases (i), (ii), (iii) 
in Theorem \ref{schauder-besov}.
\end{corollary}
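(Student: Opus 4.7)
The plan is to split the three cases of Theorem \ref{schauder-besov} and reduce each to a previously established result, without any new estimates.

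For cases (i) and (ii), I would note that the index ranges coincide (after the identifications $\frac1p-1=-1+\frac1p$ and $\frac dp-d=d(\frac1p-1)$) with those in conditions (i) and (ii) of Theorem \ref{uncond-besov}, where $\sH_d$ is already shown to be an \emph{unconditional} basis of $B^s_{p,q}(\SR^d)$. Since unconditionality of the Haar system means that every enumeration of $\sH_d$ is automatically a Schauder basis with uniformly bounded partial-sum operators, in particular every strongly admissible $\cU$ qualifies, and these two cases follow at once with no extra work.

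For case (iii), where $s=\frac dp-d$, $q=p$, and $\frac{d}{d+1}<p\le 1$, the plan is to exploit the fact that $p=q$ forces the Bourdaud localization to reduce to the usual Besov space,
\[
\big(B^s_{p,p}\big)_{\ell^p}=B^s_{p,p}(\SR^d),
\]
with equivalent quasi-norms (this is the $p=q$ case of the embeddings \eqref{Bloc1}--\eqref{Bloc2}). Theorem \ref{schauderB-bourdaud:d/p-d} will then produce $\cU$ as a Schauder basis of $(B^s_{p,p})_{\ell^p}=B^s_{p,p}(\SR^d)$, provided its uniform-boundedness hypothesis \eqref{supEN} is in force. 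This hypothesis is supplied by Theorem \ref{basic-sequence-besov}: the subrange $\frac{d}{d+1}<p<1$ falls into condition (vi) (with $q=p$), while the endpoint $p=1$, corresponding to $s=0$ and $q=1$, falls into condition (iii).

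The main (and essentially the only) obstacle is bookkeeping: one must verify that the density of $\Span\sH_d$ in $B^{d/p-d}_{p,p}(\SR^d)$, used implicitly in the second half of the proof of Theorem \ref{schauderB-bourdaud:d/p-d}, is actually available throughout case (iii). Both the open subrange $\frac{d}{d+1}<p<1$ (covered by condition (iv) of Theorem \ref{schauder-besov-local} with $q=p$) and the endpoint $p=1$ (covered by condition (ii) with $s=0$, $q=1$) lie in the admissible range, so density holds and the Schauder basis property transfers to $B^s_{p,q}(\SR^d)$ without further argument.
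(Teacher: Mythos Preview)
Your proof is correct. For case (iii) you follow exactly the same route as the paper: invoke the identity $(B^s_{p,p})_{\ell^p}=B^s_{p,p}$ and apply Theorem \ref{schauderB-bourdaud:d/p-d}, after checking \eqref{supEN} via Theorem \ref{basic-sequence-besov} and density via Theorem \ref{schauder-besov-local}.

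For cases (i) and (ii) you take a genuinely different route. You observe that these ranges coincide with the unconditionality range in Theorem \ref{uncond-besov} (whose sufficiency part is Triebel's classical result, cited but not reproved in this paper), and then note that unconditionality trivially implies the Schauder basis property for every enumeration. The paper instead argues more internally: it first handles the diagonal case $q=p$ via $(B^s_{p,p})_{\ell^p}=B^s_{p,p}$ and Theorem \ref{schauderB-bourdaud:d/p-d}, and then reaches general $q$ by real interpolation $B^s_{p,q}=(B^{s_0}_{p,p},B^{s_1}_{p,p})_{\theta,q}$ with $s_0<s<s_1$ chosen so that $(1/p,s_i)$ stay in the interior of $\fP$, transferring the uniform bound on $S^\cU_R$ by interpolation. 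Your approach is shorter and perfectly legitimate given the logical structure of the paper; the paper's interpolation argument has the advantage of being self-contained within the machinery developed here (everything reduces to the $p=q$ endpoint result just proved) rather than calling on Triebel's unconditionality as a black box.
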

\begin{proof}
When $q=p$ the result is a consequence of the identity $B^s_{p,p}=(B^s_{p,p})_{\ell^p}$ and the previous theorem. 
This covers the case (iii) in 
Theorem \ref{schauder-besov}. For the other cases, in which  $(1/p,s)$ lies in the interior of the pentagon $\fP$,
one proceeds by real interpolation as follows. Pick two numbers $s_0,s_1$ such that $s_0<s<s_1$ and $(1/p,s_i)\in\fP$, $i=0,1$.
Then, for some $\theta\in(0,1)$ we have 
\[
B^s_{p,q}=\big(B^{s_0}_{p,p},B^{s_1}_{p,p}\big)_{\theta,q}, \quad 0<q\leq \infty.
\]
Then the uniform boundedness of $S^\cU_R$ on $B^s_{p,q}$ follows by interpolation from the diagonal cases. 
\end{proof}

   \subsection{\it Error estimates for compactly supported functions}
   Here we include a technical result related to localization which will be used in the proof of
  Theorem \ref{locallowerboundsthm} 
   below.

Let $f$ be supported in a dyadic cube $Q$ with sidelength $\ell(Q)\ge1$. Since the function $\Lambda_j f$ does not have compact support, the terms $L_k \bbE_N L_j\La_j f(x)$
   will contribute  for $x$ far away from the cube. We give a crude estimate
	which will suffice for our later application.
 
\

 Let  $\zeta\in C^\infty_c(\bbR^d)$ 
be supported on $(-2,2)^d$ and such that $\zeta\equiv1$ on $[-\frac 32,\frac 32]^d$.
If $y_Q$ is  the center of $Q$, we define \[
\zeta_Q(y)= \zeta\big((y-y_Q)/ \ell(Q)\big).
\] 
Clearly $\zeta_Q f=f$ for every distribution $f$ supported in $Q$.
Moreover, this property continues to hold with $\zeta_Q$ replaced by $\tzeta_Q$, where $\tzeta(x)=\zeta(2x)$.
For $n\ge 1$ we let 
$$\zeta_{Q,n} (y)= \zeta\big(2^{-n}(y-y_Q)/ \ell(Q)\big)
-\zeta\big(2^{-n+1}(y-y_Q)/ \ell(Q)\big).$$
Note that $\zeta_{Q,n}$ has support in $\big\{\frac34\cdot 2^n\ell(Q)<|y-y_Q|_\infty<2^{n+1}\ell(Q)\big\}$, and that $\sum_{n\geq1}\zeta_{Q,n}\equiv1$.

\begin{lemma} \label{faraway} Let $s\leq1$, $0<p\leq1$ and $0<q\leq\infty$.  Then, for every $M_1>1$ there exists a constant $C_{M_1}>0$ such that, if $f\in B^s_{p,q}(\bbR^d)$ is supported in a cube $Q$ with size $|Q|\geq1$, then
   \Be
\label{kNglobal}
   \|{L_k} \bbE_N  L_j[\zeta_{Q,n} \La_j f]
   \|_p 
   \leq C_{M_1}\,
   2^{-k/p} 2^{-jM_1} 2^{-nM_1}  \|f\|_{B^s_{p,q}} ,
   \Ee  
	for all $n\geq1$, $k\geq0$, $j\geq N$ and $N\geq1$.
	
	The same holds if $\SE_N$ is replaced by 
	$ T_N  [\cdot, \fa]$ with $\|\fa\|_\infty\leq1$.
	
         \end{lemma}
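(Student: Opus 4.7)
The key observation is that the convolution kernel of $\Lambda_j$ is a Schwartz function at scale $2^{-j}$, while $\supp f\subset Q$ and $\supp\zeta_{Q,n}$ lies at distance $\gtrsim 2^n\ell(Q)\geq 2^n$ from $Q$. Hence the values of $\Lambda_j f$ on $\supp\zeta_{Q,n}$ should decay as $2^{-jM-nM}$ for arbitrarily large $M$, a decay strong enough to overwhelm all polynomial factors in $\ell(Q)$, $n$, and $k$ that will subsequently arise.

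First I would establish an $L^p$ tail bound
\[
\|\zeta_{Q,n}\Lambda_j f\|_p\leq C_{M_0}\,2^{-jM_0-nM_0}\|f\|_{B^s_{p,q}},\qquad \forall\,M_0\geq 1.
\]
Starting from the pointwise estimate $|K(z)|\lesssim_M 2^{jd}(1+2^j|z|)^{-M}$ on the convolution kernel $K$ of $\Lambda_j$, combined with an atomic or wavelet representation of $f\in B^s_{p,q}(\bbR^d)$ localized near $Q$, one derives the pointwise bound $|\Lambda_j f(y)|\lesssim_M 2^{jd}(2^j\cdot 2^n\ell(Q))^{-M}\ell(Q)^{A(M)}\|f\|_{B^s_{p,q}}$ for $y\in\supp\zeta_{Q,n}$. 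Integrating over the annular support (of volume $\approx(2^n\ell(Q))^d$) and exploiting $\ell(Q)\geq 1$ to absorb the polynomial $\ell(Q)$-powers into the $n$-decay by enlarging $M$ yields the claim.

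Next, to pass to the full bound on $L_k\bbE_NL_j[\zeta_{Q,n}\Lambda_j f]$, I would split into the regimes $k>N$ and $k\leq N$. For $k>N$, Proposition~\ref{localizedproposition}(i) gives $\|L_k\bbE_NL_j g\|_p\lesssim B_p(j,k,N)\|\cM_j g\|_p$, and the relevant factor $B_p(j,k,N)=2^{N-j}2^{(j-k)/p}2^{(j-N)(d-1)(1/p-1)_+}$ already contains $2^{-k/p}$; here $\|\cM_j g\|_p$ is controlled by $\|g\|_p$ via a minor enlargement of $\supp g$, and the growth factors in $j$ and $N$ are tamed by the arbitrarily strong decay from Step~1. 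For $k\leq N\leq j$, I would instead combine the crude $L^1$-boundedness of each of $L_k$, $\bbE_N$, $L_j$ with the support estimate $|\supp(L_k\bbE_NL_j g)|\lesssim(2^n\ell(Q))^d$, and pass from $L^1$ to $L^p$ via $\|h\|_p\leq|\supp h|^{1/p-1}\|h\|_1$. This produces a bound with no explicit $k$-decay, but since $k\leq j$ one has $2^{-jM_0}\leq 2^{-jM_1}\cdot 2^{-k/p}$ whenever $M_0\geq M_1+1/p$, which recovers the claimed factor $2^{-k/p}$.

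The main obstacle is the pointwise tail estimate of Step~1 in the range $p\leq 1$, $s\leq 1$: here $f\in B^s_{p,q}$ is only a tempered distribution, so the integral representation $\Lambda_j f(y)=\int K(y-x)f(x)\,dx$ is not directly meaningful. The atomic or wavelet decomposition of $f$ bypasses this, representing $f$ as a superposition of compactly supported bounded atoms near $Q$, each of whose contribution to $\Lambda_j f(y)$ inherits the Schwartz decay of $K$ at distances $\gtrsim 2^n\ell(Q)$; summing in the $p$-triangle inequality and invoking $\ell(Q)\geq 1$ produces the required pointwise estimate. Finally, the same argument applies verbatim to $T_N[\cdot,\fa]$ in place of $\bbE_N$, using Proposition~\ref{localizedproposition}(iii) and the trivial $L^1\to L^1$ bound for $T_N$ uniform in $\|\fa\|_\infty\leq 1$.
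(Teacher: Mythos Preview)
Your overall structure matches the paper's: establish a pointwise decay bound on $F_{j,n}:=\zeta_{Q,n}\Lambda_j f$, then split into $k\le N$ and $k>N$, invoking Proposition~\ref{localizedproposition}(i) in the latter case. The differences are in the execution.

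For the pointwise bound, the paper bypasses atomic decompositions entirely. It writes $F_{j,n}(x)=\zeta_{Q,n}(x)\,\bigl\langle\phi_j(x-\cdot)\,\tilde\zeta_Q,\,f\bigr\rangle$ (a distributional pairing, legitimate since $f=f\tilde\zeta_Q$), chooses a large $\gamma$ with $B^s_{p,q}\hookrightarrow B^{-\gamma}_{2,2}=H^{-\gamma}$, and estimates
\[
\bigl|\langle\phi_j(x-\cdot)\,\tilde\zeta_Q,\,f\rangle\bigr|\le\big\|(I-\Delta)^{\gamma/2}\bigl(\phi_j(x-\cdot)\,\tilde\zeta_Q\bigr)\big\|_2\,\|f\|_{H^{-\gamma}}.
\]
Since $\phi_j(x-\cdot)\tilde\zeta_Q$ is supported in $2Q$ and obeys the obvious Schwartz tail bounds at distance $\gtrsim 2^n\ell(Q)$, this yields directly $|F_{j,n}(x)|\lesssim_{M_2}|Q|^{1/2}2^{j(d+\gamma)}(1+2^{j+n}\ell(Q))^{-M_2}\|f\|_{B^s_{p,q}}$. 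This Sobolev-duality trick is shorter than an atomic argument, which does not in general respect $\supp f$ and would require extra bookkeeping to confine the atoms near $Q$.

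Two small imprecisions in your sketch. First, the assertion that $\|\cM_j g\|_p$ is controlled by $\|g\|_p$ is false for general $g$ when $p\le1$: no Peetre inequality is available because $g=\zeta_{Q,n}\Lambda_jf$ is not band-limited. What one actually uses (and what the paper writes) is $\|\cM_j F_{j,n}\|_p\lesssim(2^{nd}|Q|)^{1/p}\|F_{j,n}\|_\infty$, which follows from the support size together with the pointwise bound you already have. Second, for $k\le N$ the paper takes the $L^\infty$ route, namely $\|L_k\bbE_NL_j g\|_\infty\lesssim\|g\|_\infty$ combined with $\|h\|_p\le|\supp h|^{1/p}\|h\|_\infty$, rather than your $L^1$ route; both work, but the $L^\infty$ version avoids having to extract $\|g\|_1$ from the pointwise bound (your stated $L^p$ estimate alone would not suffice since $p\le1$).
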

      \begin{proof} 
      Let $\phi_j (x)= 2^{jd} \phi(2^j x)$ be the convolution kernel of $\La_j$,
			with $\phi\in\cS$.
      Let $$F_{j,n}(x):= \zeta_{Q,n}(x) \Lambda_j f(x)=      
      \zeta_{Q,n}(x) 
      \biginn{\phi_j(x-\cdot) \tzeta_Q(\cdot)}{f}$$
      where we have used $f=f\tzeta_Q$ for the second equation and  the pairing $\inn{\cdot}{\cdot}$ is  in the sense of tempered distributions.
			
Pick a large $\gamma\in2\SN$ such that 
$B^s_{p,q}\subset B^{-\gamma}_{2,2}$ (e.g., $\ga > d(\frac1p-\frac12)-s$). Then by duality 
     \Bea |F_{j,n}(x)| &\lc &|\zeta_{Q,n}(x) |\,\big\| (I-\Delta)^{\gamma/2} 
      \big(\phi_j(x-\cdot) \tzeta_Q(\cdot) \big)\big\|_2 \;
      \|f\|_{B^{-\gamma}_{2,2}} \nonumber
      \\
      & \lc_{M_2}& |Q|^{1/2}\, 2^{j(d+\gamma)}\, (1+2^{j+n}  \ell(Q))^{-M_2}\, 
 \|f\|_{B^s_{p,q}}.\label{M2}
 \Eea
Observe that $F_{j,n}$, and hence $L_k\bbE_N L_j  [F_{j,n}]$, are all supported in a set of diameter $C 2^{n} \ell(Q)$.
Then, if $k\leq N$ we have
      \Beas
      \|{L_k} \bbE_N L_j (F_{j,n})
   \|_p
   &  \lc &  (2^{n d}|Q|)^{1/p} 
     \|{L_k} \bbE_N L_j (F_{j,n})
   \|_\infty\\
	&  \lc &  (2^{n d}|Q|)^{1/p} 
     \|F_{j,n}
   \|_\infty.
   \Eeas
	Inserting the bound \eqref{M2} into this expression, with a sufficiently large $M_2$,
	and using that $k\leq N\leq j$, one easily obtains \eqref{kNglobal}.
  
	Assume now that $k>N$. We may use Proposition \ref{localizedproposition}.i to obtain
	\[
	\big\|L_k\SE_N L_j(F_{j,n})\big\|_p\lesssim\,2^{-\frac kp}\,2^{j(\frac dp-d)}\,2^{N(d-\frac{d-1}p)}\,\|\cM_jF_{j,n}\|_p.
	\]
	By the support properties of $F_{j,n}$ we have
	\[
	\|\cM_jF_{j,n}\|_p\lesssim \,(2^{nd}|Q|)^{1/p}\,\|F_{j,n}\|_\infty,
	\]
   so again, using \eqref{M2} with a sufficiently large $M_2$, and the assumption $N\leq j$,  one easily derives \eqref{kNglobal}.  
       \end{proof}


   \section{The case  $s=d(\frac 1p-1)$ when $q>p$.}
   \label{ple1slowerendpt}

In this section we restrict to the cases $q>p$ in the line $s=d/p-d$.
We shall see that the individual operators $\SE_N$ are not bounded, and hence 
 positive results are not expected in this range.

\begin{theorem}\label{noestimateforq>p}
Let $0<p\leq 1$. If $q>p$ then the operators $\bbE_N$ are unbounded on $B^{d/p-d}_{p,q}(\bbR^d)$.
\end{theorem}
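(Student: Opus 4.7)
The plan is to split by the range of $q$: for $p<q\leq 1$ the theorem follows directly from the local lower bound in Theorem~\ref{locallowerboundsthm}, while for $q>1$ I would invoke duality.

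For $p<q\leq 1$, Theorem~\ref{locallowerboundsthm} supplies
\[
\rhoop\big(\bbE_N,B^{d/p-d}_{p,q},Q\big)\,\gtrsim\,\big(2^{Nd}|Q|\big)^{1/p-1/q}
\]
for every dyadic cube $Q$ with $|Q|\geq 1$. Since $q>p$ makes $1/p-1/q>0$, the right-hand side diverges as $|Q|\to\infty$. Because the global operator norm dominates the local operator norm on every cube, this already gives $\|\bbE_N\|_{B^{d/p-d}_{p,q}\to B^{d/p-d}_{p,q}}=\infty$.

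For $q>1$ I would proceed by duality. The dual of $B^{d/p-d}_{p,q}$ is identified with $B^0_{\infty,q'}$ (with $q'<\infty$), via the Sobolev embedding $B^{d/p-d}_{p,q}\hookrightarrow B^0_{1,q}$ composed with the standard $B^0_{1,q}$--$B^0_{\infty,q'}$ pairing. If $\bbE_N$ were bounded on $B^{d/p-d}_{p,q}$, then by self-adjointness on $L^2$ its adjoint would extend to a bounded operator on $B^0_{\infty,q'}$. But $\bbE_N g$ is always a locally finite sum of characteristic functions $\bbone_{I_{N,\mu}}$, and no such characteristic function lies in $B^0_{\infty,q'}$ when $q'<\infty$: the frequency pieces $\La_k\bbone_{I_{N,\mu}}$ have $L^\infty$-norm bounded away from zero uniformly in $k$ (owing to the jump across $\partial I_{N,\mu}$), so $\sum_k\|\La_k\bbone_{I_{N,\mu}}\|_\infty^{q'}=\infty$. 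Making this quantitative by taking Schwartz approximations $g_n$ of $\bbone_{I_{N,\mu}}$ with $\|g_n\|_{B^0_{\infty,q'}}\lesssim 1$ yet $\|\bbE_N g_n\|_{B^0_{\infty,q'}}\to\infty$, and then dualizing, yields Schwartz test functions in $B^{d/p-d}_{p,q}$ of bounded norm on which $\bbE_N$ has unbounded norm.

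The main obstacle is the $q>1$ case, since there $\bbE_N$ is not a priori defined on the full Besov space (the pairings $\langle f,\bbone_{I_{N,\mu}}\rangle$ need not make sense when $\bbone_{I_{N,\mu}}$ lies outside the dual). The goal is therefore to rule out every bounded extension from $\cS$, and the duality argument combined with an explicit Schwartz-approximation step is the natural vehicle; alternatively, if one can show that the lower-bound construction behind Theorem~\ref{locallowerboundsthm} extends to $q>1$ (only the matching upper bound seems to genuinely require $q\leq 1$), a unified treatment via $|Q|\to\infty$ becomes available.
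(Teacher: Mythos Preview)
Your case split at $q=1$ is unnecessary: Theorem~\ref{locallowerboundsthm}(i) is stated for the full range $p\le q\le\infty$, so the argument ``let $|Q|\to\infty$'' already disposes of every $q>p$ at once. This is precisely how the paper proceeds. In \S\ref{lowbdslocaldp-1sect} the authors build a single test family $f_N=\sum_m a_m\,g_{N+m}(\cdot-2^{-N+5}\fz_m)$ and establish the two-sided estimate
\[
\|f_N\|_{B^{d/p-d}_{p,q}}\lesssim\|a\|_{\ell^q},\qquad
\|\bbE_N f_N\|_{B^{d/p-d}_{p,q}}\gtrsim\|a\|_{\ell^p},
\]
valid for all $q\ge p$; choosing $a\in\ell^q\setminus\ell^p$ immediately gives unboundedness of $\bbE_N$, and truncating $a$ to the lattice points inside $Q$ then gives Theorem~\ref{locallowerboundsthm}(i). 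So the paper's proof of Theorem~\ref{noestimateforq>p} and your first argument are the same construction, with no need for a separate treatment of $q>1$.

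Your proposed duality argument for $q>1$ has a genuine gap. Smooth approximations $g_n$ of $\bbone_{I_{N,\mu}}$ do \emph{not} satisfy $\|g_n\|_{B^0_{\infty,q'}}\lesssim 1$: a mollification at scale $2^{-n}$ has $\|\La_k g_n\|_\infty\approx 1$ for all $k\lesssim n$, so $\|g_n\|_{B^0_{\infty,q'}}\approx n^{1/q'}\to\infty$. The correct (and simpler) version of the duality argument, which the paper uses in \S\ref{leftdottedlines-sect} and in the Remark after Theorem~\ref{locallowerboundsthm}, is this: if $\bbE_N$ were bounded on $B^{d/p-d}_{p,q}$, then composing with the bounded functional $g\mapsto\int g\psi$ for a fixed $\psi\in C^\infty_c(\mathrm{int}\,I_{N,\mu_0})$ with $\int\psi=1$ shows that $f\mapsto\langle f,\bbone_{I_{N,\mu_0}}\rangle$ is bounded on $B^{d/p-d}_{p,q}$, contradicting $\bbone_{I_{N,\mu_0}}\notin(B^{d/p-d}_{p,q})^*=B^0_{\infty,q'}$ when $q'<\infty$.
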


We shall actually prove something stronger, namely
optimal estimates for the local version of the operator norms 
$\rhoop(\SE_N,B^s_{p,q},Q)$ defined in \eqref{localnorms}.
This may be of interest in the context of Besov spaces in bounded domains; see Remark \ref{RemTd}. We remark that Oswald \cite{oswald} also proved some lower bounds in a local setting which grow with $N$.  The following theorem provides optimal growth rates.

 \begin{theorem}\label{locallowerboundsthm}
 (i) If $0<p\le 1$ and $p\le q\le \infty$, then 
 there is a constant $c_1=c_1(p,q)>0$  so that
\[ \rhoop\big(\bbE_N,B^{d/p-d}_{p,q}, Q\big)\,\ge \, c_1  \; (2^{Nd}|Q|)^{\frac 1p-\frac 1q}.\]
 
(ii)  If in addition
  $\frac{d}{d+1}\leq p\le q\le  1$,  then there is a constant
    $c_2=c_2(p,q)$, such that for any dyadic cube  $Q$ with side length $\ge 1$ and  any $N>10$
\Be\label{normequivalencelocal} c_1    \,\le\, \frac{
 \rhoop\big(\bbE_N,B^{d/p-d}_{p,q}, Q\big)}{ (2^{Nd}|Q|)^{\frac 1p-\frac 1q}} 
 \le \, c_2  \,
 .\Ee
\end{theorem}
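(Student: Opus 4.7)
The proof splits into the upper and lower bounds, which I handle separately.

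For the upper bound in (ii), I plan to combine Propositions \ref{jk>N}--\ref{j>NkleN} of Section \ref{upbdsple1sect} at the critical exponent $s = d/p - d$ with $r = q$. The pieces indexed by $j \le N$, coming from Propositions \ref{jleNk>N} and \ref{jleNkleN}, produce bounds of the form $\|\cdot\|_{\ell^q(L^p)} \lesssim \|f\|_{B^{d/p-d}_{p,\infty}} \le \|f\|_{B^{d/p-d}_{p,q}}$. The pieces indexed by $j > N$, from Propositions \ref{jk>N} and \ref{j>NkleN}, produce bounds by $\|f\|_{B^{d/p-d}_{p,p}}$. To upgrade the latter to the desired $B^{d/p-d}_{p,q}$ bound with prefactor $(2^{Nd}|Q|)^{1/p - 1/q}$, I will apply a local H\"older-type embedding valid for $f$ supported in $Q$: the point is that the ``effective'' Haar/Littlewood--Paley indices contributing to $\bbE_N f$ have combined cardinality $\sum_{k<N}2^{kd}|Q| \approx 2^{Nd}|Q|$, precisely the H\"older prefactor. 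Far-away contributions are controlled by Lemma \ref{faraway}.

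For the lower bound in (i), I plan to construct an explicit Rademacher-type test function. Enumerate the cubes $I_{N,\mu} \subset Q$ as $\{I_{N,\mu}\}_{\mu \in \cM}$ with $M := |\cM| \approx 2^{Nd}|Q|$, fix a non-negative Schwartz bump $\phi$ with $\int \phi = 1$ and $\supp \phi \subset (0,1)^d$, and set $\phi_\mu(x) := 2^{Nd}\phi(2^N(x - \mu/2^N))$ so that $\phi_\mu$ is supported in $I_{N,\mu}$ with $\int \phi_\mu = 1$ and $\bbE_N \phi_\mu = 2^{Nd}\bbone_{I_{N,\mu}}$. For signs $\eps_\mu \in \{\pm 1\}$ that realise the expected Khintchine bounds, consider
\[
f := M^{-1/q} \sum_{\mu \in \cM} \eps_\mu \phi_\mu,
\]
so that $\bbE_N f = 2^{Nd} M^{-1/q} \sum_\mu \eps_\mu \bbone_{I_{N,\mu}}$. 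The estimates $\|f\|_{B^{d/p-d}_{p,q}} \lesssim 1$ and $\|\bbE_N f\|_{B^{d/p-d}_{p,q}} \gtrsim M^{1/p - 1/q}$ will be verified by a scale-by-scale analysis using Khintchine's inequality on the signed sums and a point-mass/Riemann-sum approximation of $\phi_\mu$ as an approximate Dirac measure. The desired amplification should arise from the asymmetry at scales $k > N$: $\bbone_{I_{N,\mu}}$ has jump discontinuities yielding nontrivial Littlewood--Paley content there, while the smooth bump $\phi_\mu$ contributes negligibly at those scales.

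The main obstacle in (i) is balancing the $\ell^p$-spatial and $\ell^q$-scale structures at the critical endpoint $s = d/p - d$ and sharply quantifying the excess high-frequency content of $\bbE_N f$ relative to $f$; it may well be that a more intricate multi-scale test function is needed to achieve the exact prefactor. In (ii), the analogous main step is to verify the local H\"older embedding with the sharp prefactor $(2^{Nd}|Q|)^{1/p - 1/q}$, which requires careful tracking of the doubly-indexed Haar-coefficient structure of functions supported in $Q$.
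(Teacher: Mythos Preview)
Your lower-bound construction has a genuine gap. With all bumps $\phi_\mu$ placed at the \emph{same} scale $2^{-N}$, the Littlewood--Paley content of $f=M^{-1/q}\sum_\mu\eps_\mu\phi_\mu$ is concentrated at $k\approx N$, where the supports of the $L_N\phi_\mu$ are essentially disjoint. Random signs give no cancellation there, and one computes directly
\[
2^{N(\frac dp-d)}\|L_N f\|_p \;\approx\; M^{-1/q}\Big(\sum_\mu \|L_N\phi_\mu\|_p^p\Big)^{1/p}\;\approx\; M^{1/p-1/q},
\]
so $\|f\|_{B^{d/p-d}_{p,q}}\gtrsim M^{1/p-1/q}$, not $\lesssim 1$. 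The same computation shows $\|\bbE_N f\|_{B^{d/p-d}_{p,q}}\approx M^{1/p-1/q}$, and the ratio is $O(1)$. Your own caveat is exactly right: a \emph{multi-scale} test function is needed. The paper places one bump $g_{N+m}$ at each scale $N+m$, $m=1,\dots,M$, at separated spatial locations $2^{-N+5}\fz_m$; the input then has one Littlewood--Paley piece per scale (so the $\ell^q$ over scales gives $M^{1/q}$), while $\bbE_N$ collapses everything to scale $N$ with disjoint spatial supports (so one sees $\ell^p$ and gets $M^{1/p}$). This asymmetry between $\ell^q$-in-scale before and $\ell^p$-in-space after is precisely what your single-scale Rademacher construction cannot see.

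Your upper-bound plan is in the right spirit but the H\"older step is misplaced. Propositions~\ref{jk>N} and \ref{j>NkleN} deliver a bound by $(\sum_{j>N}2^{jsp}\|\La_j f\|_p^p)^{1/p}$, and there is no finite-cardinality H\"older passage from this $\ell^p_j$-sum (over infinitely many $j>N$) to an $\ell^q_j$-sum. The count $\sum_{k<N}2^{kd}|Q|$ you mention is not the index set of that sum. In the paper the H\"older inequality is applied \emph{inside} the proof of Proposition~\ref{jk>N}: after using the support/pointwise Lemma~\ref{supportlemma} one obtains a sum over the $\approx 2^{Nd}|Q|$ cubes $I\in\sD_N$ with $I\cap Q_*\neq\emptyset$, and it is in \emph{that} spatial sum that H\"older (with exponent $q/p$) produces the factor $(2^{Nd}|Q|)^{1/p-1/q}$. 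One then reassembles an $\ell^q_j$-bound. The $k\le N$, $j>N$ piece is handled similarly via spatial H\"older together with the Nikolskii inequality $\|\La_j f\|_q\le 2^{jd(1/p-1/q)}\|\La_j f\|_p$. So you will need to reopen those propositions rather than use them as black boxes.
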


\begin{remark}
From \cite[2.11.3]{Tr83} it is known that, when $0<p\leq 1$ and $1<q<\infty$, it holds
\[
\big(B^{d/p-d}_{p,q}\big)^*=B^0_{\infty, q'}.\]
As $q'<\infty$, this space does not contain the dual functionals $u^*_n$.  
In particular, the restriction in $q$  in part (ii) of Theorem \ref{locallowerboundsthm} is natural,
since for $q>1$ and $Q_0=(0,1)^d$ we have
 \[
\rhoop\big(\bbE_0,B^{d/p-d}_{p,q}, Q_0\big)=\|\bbone_{Q_0}\|_{B^{d/p-d}_{p,q}}\,\|\bbone_{Q_0}\|_{(B^{d/p-d}_{p,q})^*}=\infty;
\] see also \cite[Thm 2.ii.a]{oswald}. 
\end{remark}

  \subsection{\it Proof of lower bounds in Theorem \ref{locallowerboundsthm}}\label{lowbdslocaldp-1sect}
  We fix $0<p\leq1$ and choose an positive  integer $M>d/p-d$.
  
   Let $\eta\in C^\infty_c(\bbR)$ be an odd function, supported on $(-1/2,1/2)$, and such that $\int_0^{1/2} \eta(t) dt=1$ and 
   $\int_0^{1/2} t^n\eta(t) dt=0$ for $n=1,\dots, M$.
     Let further
     \Be\label{gldef} g_l(x_1,\ldots, x_d)= 2^{ld}\prod_{i=1}^d \eta(2^l x_i),\Ee
     so that $\int g_l(x) P_M(x) dx=0$ whenever $P_M$ is a polynomial of degree $\le M$.
By the properties of $\eta$, if $l\geq N$ we have
   \Be\label{hNdef}  
	\bbE_N (g_l)(x)=2^{Nd}\prod_{i=1}^d \big(\bbone_{[0, 2^{-N})}(x_i)- \bbone_{[-2^{-N}, 0)}(x_i)\big)=:h_N(x).\Ee
Notice that $h_N$ is not itself a Haar function, but up to a factor $(-2)^d$, it is a translate of a Haar function with Haar frequency $2^{N-1}$.
Moreover, we also have
   \[ \bbE_N [ g_l(\cdot-\nu)] =h_N (\cdot-\nu),\quad \text{ if } \nu\in 2^{-N} \bbZ^d, \quad l\geq N. \]
   
   Let $\{\fz_m\}_{m=1}^\infty$ be an enumeration of $\bbZ^d$, and define   
   \Be\label{defof-f} f_N(x)= \sum_{m=1}^\infty a_m g_{N+m} (x-2^{-N+5}\fz_m). \Ee
Observe that the summands have disjoint supports. Also
   \Be\label{defof-Ef} \bbE_N f_N= \sum_{m=1}^\infty a_m h_N (\cdot-2^{-N+5}\fz_m). \Ee
	 We claim that 
   \Be\label{upperbdforf}
   \|f_N\|_{B^{d/p-d}_{p,q} } 
   \lc \Big(\sum_{m=1}^\infty |a_m|^q\Big)^{1/q}
   \Ee
   and
   \Be\label{lowerbdforENf}
	\|\SE_N f_N\|_{B^{d/p-d}_{p,q}}
   \gc  \Big(\sum_{m=1}^\infty |a_m|^p\Big)^{1/p}\,.
   \Ee
 This clearly implies that $\bbE_N$ cannot be a bounded operator  on $B^{d/p-d}_{p,q}(\bbR^d)$ unless
   $q\le p$.

We first show   \eqref{lowerbdforENf}. To do so we construct specific functions $\Psi_n$ such that 
\Be
 \|g\|_{B^{d/p-d}_{p,q} }\, \geq\, \|g\|_{B^{d/p-d}_{p,\infty} } \,\gtrsim \,\sup_{n\geq1}\, 2^{n(\frac dp-d)} \|\Psi_n * f\|_p.
\label{Psi2}\Ee 
Let $\psi\in C^\infty_c(\bbR)$ be supported in $(-1/2,1/2)$, with 
   \[\int\psi(t) t^l dt=0, \quad l=0,\dots, M\,,\] and such that,  for some $\eps>0$, 
   \Be\label{lowerbdforpsiconv}
   \psi* (\bbone_{[0,\frac 12)}- \bbone_{[-\frac 12,0)})(t) \ge c>0 \quad \text{ when }  t\in [\tfrac 12,\tfrac 12+\eps].
   \Ee
 We then define \Be\label{PsiNdef}
   \Psi_n(x)= 2^{nd} \prod_{i=1}^d\psi(2^nx_i),
   \Ee
   which has enough vanishing moments to guarantee the validity of \eqref{Psi2}; see \cite[2.5.3]{triebel2}. In particular,
		\[
		\|\SE_Nf_N\|_{B^{d(\frac 1p-1)}_{p,q} }\,\gtrsim  2^{N(\frac dp-d)} \|\Psi_{N+1} * (\SE_Nf_N)\|_p.
		\]
		Next, using  \eqref{lowerbdforpsiconv} one shows that, for 
		$x\in 2^{-N+5} \fz_m+2^{-N-1}[\tfrac 12,\tfrac 12 +\eps]^d$,
   \[
	\Psi_{N+1}*h_N(x-2^{-N+5}\fz_m)
   \ge 2^{Nd} c^d,
	\]
	and therefore
	\Be
	\big\|\Psi_{N+1}*h_N(\cdot-2^{-N+5}\fz_m)\big\|_p\,\gtrsim\, 2^{N(d-\frac dp)}.
	\label{Psicd}
	\Ee
   Also the functions 
$   \Psi_{N+1}*h_N(\cdot-2^{-N+5}\fz_m)
$ have disjoint supports so that
   \Beas
   \big\|\Psi_{N+1}*\SE_Nf_N\big\|_p &  = &  
   \Big(\sum_{m=1}^\infty |a_m|^p \big\|\Psi_{N+1}*h_N(\cdot-2^{-N+5}\fz_m)\big\|_p^p\Big)^{1/p}
   \\
   & \gc & \Big(\sum_m|a_m|^p\Big)^{1/p} 2^{-Nd(\frac 1p-1)}
   \Eeas
   and \eqref{lowerbdforENf} follows.
   
   To prove \eqref{upperbdforf} we examine $L_j g_l$ with $l=N+m$  and use the cancellation of the convolution kernel $\beta_j$ of $L_j$ when $j\ge l$, and the cancellation of $g_l$ for $j<l$. Here cancellation refers to $M$  vanishing moments. As a consequence we obtain the estimate
   \Be
   |L_j g_l(x)| \lc \,
	\begin{cases} 2^{ld} \bbone_{[-1,1]^d}(2^l x) 2^{-M|l-j|} &\text{ for } j\ge l, \\
   2^{jd} \bbone_{[-1,1]^d}(2^j x) 2^{-M|l-j|} &\text{ for } j\le l;
   \end{cases}
   \label{moment_gl}
	\Ee
	see a similar argument in the proof of \cite[Lemma 2.2]{gsu}.
From here one easily obtains   
   \Be
2^{jd(\frac 1p-1)}\|L_jg_l\|_p  \lc \left\{\Ba{ll}
2^{-(M-d(\frac 1p-1))|l-j|} &\text{ if } j\ge l
\\
2^{-M|l-j|} &\text{ if } j\le l
\Ea\right\}  \;\leq\,  2^{-\dt|l-j|},
\label{Ljgjdt}
   \Ee
	if we set $\dt=M-d(\frac 1p-1)>0$.
This leads to 
\begin{align*}
2^{j d(\frac 1p-1)}
 \|L_j f_N\|_p &\leq
2^{j d(\frac 1p-1)}
 \Big(\sum_{m=1}^\infty|a_m|^p\,\|L_j g_{N+m} (\cdot-2^{-N+5} \fz_m)\|_p^p\Big)^{1/p}
\\&\lc 
\Big(\sum_{m=1}^\infty |a_m|^p 2^{-|N+m-j|\,\dt p}\Big)^{1/p},
\end{align*}
and consequently, 
\begin{align*}
&\Big(\sum_{j\ge 0}\big [2^{j d(\frac 1p-1)} \|L_j f_N\|_p\big ]^q\Big)^{1/q}\lc \Big(\sum_{j\ge 0} \Big(\sum_{m=1}^\infty |a_m|^p\, 2^{-|N+m-j|\,\delta p }\Big)^{q/p}\Big)^{1/q}.
 \end{align*}
 Since  $q\ge p$ we can apply the triangle inequality in $\ell^{q/p}$ to bound the previous expression,  
 by
 \begin{align*}& \Big(\sum_{j\ge 0} \Big(\sum_{n\in \bbZ} |a_{n+j-N} |^p 2^{-|n|\delta p} \Big)^{q/p}\Big)^{1/q}
 \\ &\lc 
 \Big(\sum_{j\ge 0} \sum_{n\in \bbZ} |a_{n+j-N} |^q 2^{-|n|q\frac\delta2 }\Big)^{1/q}
    \lc \Big(\sum_{m=1}^\infty|a_m|^q\Big)^{1/q}.
   \end{align*}
   This proves \eqref{upperbdforf}.
   
   \medskip
   
Finally, to establish the lower bound in Theorem \ref{locallowerboundsthm}, we simply chose  
   \[ a_m= \begin{cases} 1 \text{ if }  2^{-N+5} \fz_m\in Q
   \\
0 \text{ if }  2^{-N+5} \fz_m\notin Q.
\end{cases} 
\]
Since $\{\fz_m\}$ enumerates $\bbZ^d$ and $\# (2^{-N+5} \bbZ^d\cap Q)  \approx 2^{Nd} |Q|
$ we obtain \[\|f_N\|_{B^{d(\frac 1p-1)}_{p,q}} 
\lc (2^{Nd} |Q|)^{1/q}\]
from \eqref{upperbdforf}, and 
\[\|\bbE_N f_N\|_{B^{d(\frac 1p-1)}_{p,q}} \gc (2^{Nd} |Q|)^{1/p}\] 
from \eqref{lowerbdforENf}. This establishes the desired  lower bound for all $q\geq p$. \qed

\subsection{\it Proof of upper bounds in Theorem \ref{locallowerboundsthm} (ii)}
In what follows  let  $Q$ be a dyadic cube of side length $\ge 1$. We assume $\frac{d}{d+1}\leq p\le q\le 1$.

We use the global estimates
\eqref{PiN}, \eqref{SENPiN}  and examine the two expressions on the right hand side of \eqref{SENPiN} corresponding to the cases  $j\le N$ and $j\ge N$. The terms for $j\le N$ cause no problem. Namely, by 
Propositions \ref{jleNk>N}
and \ref{jleNkleN}
we have (for $p\le q$)
\[\Big(\sum_{k=0} 2^{k(\frac dp-d) r} \Big\|\sum_{j\le N} L_k\bbE_N ^\perp L_j\Lambda_j f\Big\|_p^r\Big)^{1/r}
\lc \|f\|_{B^{d/p-d}_{p,\infty}} \text{ if } \tfrac{d}{d+1}<p\le 1,
\]
and in the endpoint $q\ge p=\frac{d}{d+1}$  (when $d/p-d=1$) we have 
\begin{multline*} \Big(\sum_{k=0} 2^{k r} \Big\|\sum_{j\le N} L_k\bbE_N ^\perp L_j\Lambda_j f\Big\|_p^r\Big)^{1/r}\\
\lc 
\Big(\sum_{j=0}^N 2^{jp} \|\La_jf\|_p^p\Big)^{1/p}   
\lc N^{\frac 1p-\frac 1q}\|f\|_{B^{1}_{p,q}}, \quad  p=\tfrac{d}{d+1}
\end{multline*}
where we have applied H\"older's inequality.
 This global bound is far better than  what is need for the conclusion and this part satisfies the target upper bound in  \eqref{normequivalencelocal}.

Hence  it suffices to prove, for $f$ supported in $Q$, the following bound
\begin{multline}
\Big(\sum_{k=0}^\infty 2^{kd(\frac 1p-1)r }
\Big\| \sum_{j\ge N+1} L_k \bbE_N L_j\La_j f\Big\|_p ^r \Big)^{1/r} \\
\lc 
(2^{Nd}|Q|)^{\frac 1p-\frac 1q}
 \Big(\sum_{j=0}^\infty 2^{jd(\frac 1p-1)q}\|\La_j f\|_p^q\Big)^{1/q},
\end{multline}
for any $r>0$.
Notice that    Lemma \ref{faraway}  reduces matters to
       show the following inequalities.  
      \begin{multline}\label{kj>N-Besovqloc}
\Big(\sum_{k\ge N+1} 2^{kd(\frac 1p-1)r }
\Big\| \sum_{j\ge N+1} L_k \bbE_N L_j[\zeta_Q \La_j f]
\Big\|_p 
^r \Big)^{1/r} \\
\lc 
(2^{Nd}|Q|)^{\frac 1p-\frac 1q}\Big(\sum_{j=0}^\infty 2^{jd(\frac 1p-1)q}\|\La_j f\|_p^q\Big)^{1/q}
\end{multline}
and
\begin{multline} \label{kleNj>N-Besovqloc}
\Big(\sum_{k\le N} 2^{kd(\frac 1p-1)r }
\Big\| \sum_{j\ge N+1} L_k \bbE_N L_j[\zeta_Q \La_j f]\Big\|_p ^r \Big)^{1/r}\\
 \lc 
(2^{Nd}|Q|)^{\frac 1p-\frac 1q} \Big(\sum_{j=0}^\infty 2^{jd(\frac 1p-1)q}\|\La_j f\|_p^q\Big)^{1/q}.
\end{multline}

We first prove \eqref{kj>N-Besovqloc}.
Instead of using Proposition \ref{jk>N} 
 directly we shall use a modification of its proof in \cite[Proposition 2.1(i)]{gsu};
we first recall some notation  from that paper.

  We let $\sD_N$ be the collection of dyadic cubes of sidelength $2^{-N}$. 
 For $j>N$ we define 
 $\cU_{N,j}$ as in \eqref{UNj}, that is a $2^{-j-1}$-neighborhood of the set $\cup_{I\in\sD_N}\;\partial I$. 
 For $I\in \sD_N$ and $l>N$ we denote by 
$\sD_l[\partial I]$ the set of all $J\in \sD_{l}$ such that 
$\bar{J}\cap\partial I\not=\emptyset$.
Likewise, $\sD_N(I)$ denotes the collection of 
cubes $I'\in\sD_N$ with $\bar{I}\cap \bar{I'}\not=\emptyset$, that is the collection of 
neighboring cubes of $I$.

\medskip

We use the following result taken from \cite[Lemma 2.3]{gsu}.

\begin{lemma}  \label{supportlemma} 
(i) Let $k>N\ge 1$ and $G$ be locally integrable. Then
\Be L_k (\bbE_N G)(x)=0, \quad\text{ for all } x\in \cU_{N,k}^\complement=\bbR^d\setminus\cU_{N,k}\,.\Ee

(ii) Let $j>N\ge 1$, and $F$ locally integrable. 
\Be
\big|\SE_N(L_j F)\big|\lesssim 2^{(N-j)d}\sum_{I\in\sD_N} \sum_{J\in\sD_{j+1}[\partial I]}\|F\|_{L^\infty(J)}\,\bbone_I.
\label{ENjmax}
\Ee
\end{lemma}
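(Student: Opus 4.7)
The plan for both parts is to exploit the single cancellation identity $\int \beta_k(u)\,du = 0$ (which holds since $\beta$ has vanishing moments of order $M \geq 0$) combined with the explicit support condition $\supp \beta_k \subset (-2^{-k-1}, 2^{-k-1})^d$.

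For part (i), I would first decompose $\bbE_N G = \sum_{I\in\sD_N} c_I \bbone_I$ with $c_I = 2^{Nd}\int_I G$, so that
\[
L_k(\bbE_N G)(x) = \sum_{I\in\sD_N} c_I\,(\beta_k * \bbone_I)(x) = \sum_I c_I \int_{x-I} \beta_k(u)\,du.
\]
I would then verify the following dichotomy for $x \in \cU_{N,k}^\complement$: the hypothesis $\dist(x_i, 2^{-N}\bbZ) > 2^{-k-1}$ for every $i$ forces, for each cube $I\in\sD_N$, either (a) the set $x - I$ entirely contains the cube $(-2^{-k-1}, 2^{-k-1})^d \supset \supp \beta_k$ (this occurs precisely when $x$ lies in the interior of $I$), or (b) $x - I$ is entirely disjoint from $\supp \beta_k$ (this occurs when $x \notin I$, since then the $\ell^\infty$-distance from $x$ to $I$ must exceed $2^{-k-1}$). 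In case (a) the integral equals $\int \beta_k = 0$; in case (b) the integrand vanishes identically. Summing gives $L_k(\bbE_N G)(x) = 0$.

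For part (ii), I would begin with the identity
\[
\bbE_N(L_j F)(x) = \sum_{I\in\sD_N} \bbone_I(x)\,2^{Nd} \int_I L_j F(y)\,dy
\]
and use Fubini to rewrite $\int_I L_j F(y)\,dy = \int F(z)\,\psi_I(z)\,dz$, where $\psi_I(z) := \int \beta_j(y-z)\,\bbone_I(y)\,dy$. By exactly the same support-plus-cancellation dichotomy as in part (i) (now applied to $\beta_j$ and with the roles of $x$ and $z$ interchanged), $\psi_I(z) = 0$ whenever $\dist(z, \partial I) > 2^{-j-1}$, and trivially $\|\psi_I\|_\infty \leq \|\beta\|_1$. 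Consequently,
\[
\left| \int F(z)\,\psi_I(z)\,dz \right| \lesssim \sum_{J \in \sD_{j+1}[\partial I]} |J|\,\|F\|_{L^\infty(J)} = 2^{-(j+1)d} \sum_{J \in \sD_{j+1}[\partial I]} \|F\|_{L^\infty(J)},
\]
which, substituted into the display above and with the factor $2^{-d}$ absorbed into the implicit constant, yields \eqref{ENjmax}.

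The only mild obstacle is the set-theoretic bookkeeping in part (ii): the $2^{-j-1}$-neighborhood of $\partial I$ is not strictly contained in $\bigcup_{J \in \sD_{j+1}[\partial I]} J$, but rather in a one-layer enlargement by adjacent cubes. This contributes only a dimensional constant (at most $3^d$) which is absorbed into the $\lesssim$; no further work is required. All remaining manipulations are elementary.
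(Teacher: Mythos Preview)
Your argument is correct and is essentially the standard one; the paper itself does not give a proof but simply cites \cite[Lemma~2.3]{gsu}, so there is nothing to compare against here.

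One small comment on your final paragraph: the obstacle you flag is in fact not present. If $z$ lies in the $2^{-j-1}$-neighbourhood of $\partial I$ (in the $\ell^\infty$ metric) and $J\in\sD_{j+1}$ is the dyadic cube containing $z$, then $\bar J\cap\partial I\neq\emptyset$. Indeed, since the vertices of $I$ lie in $2^{-N}\bbZ^d\subset 2^{-j-1}\bbZ^d$, the coordinate $z_i$ that realises the distance $\le 2^{-j-1}$ to a face of $I$ forces the corresponding endpoint of $J_i$ to lie on that face; the remaining coordinates of $z$ already sit in $\bar I$, so one can choose a point of $\bar J$ with $i$-th coordinate on $\partial I$ and the other coordinates in $\bar I$. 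Thus the family $\sD_{j+1}[\partial I]$ already covers $\supp\psi_I$ (up to a null set), and no enlargement factor is needed. Your precaution of absorbing a factor $3^d$ is harmless, of course, but unnecessary.
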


 \begin{proof}[Proof of \eqref{kj>N-Besovqloc}]
     
Observe that $F_j:= \zeta_Q \La_j f$ and the functions $L_k \bbE_N L_j[F_j]$ are all supported 
in a fixed $C$-dilate of $Q$ (with say $C=10$). 
 By Lemma \ref{supportlemma}.i, $L_k\bbE_N[L_j F_j](x)=0$ if $x\in\cU_{N,k}^\complement$.
  We derive a pointwise estimate if  $x\in \cU_{N,k}\cap I$ for some $I\in\sD_N$. 
From \eqref{ENjmax} and the fact that $\supp\beta_k(x-\cdot)$ is contained in the union of all $I'\in\sD_N(I)$ we have
 \Beas
|L_k\bbE_N[L_j F_j](x)|& \leq & \int|\beta_k(x-y)|\,\big|\SE_N(L_j F_j)(y)\big|\,dy\\
& \lc &   2^{(N-j)d} \sum_{I'\in\sD_N(I)}\sum_{J\in\sD_{j+1}[\partial I']}
\|F_j\|_{L^\infty(J)}.
\Eeas

Let $Q_*$ be the above $C$-dilate of $Q$. Then 
using $|\cU_{N,k} \cap I | \approx 2^{-N(d-1)-k}$ we have
\begin{align*}
&\Big\| \sum_{j\ge N+1} L_k\bbE_N \big[ 
 L_jF_j \big]\Big\|_p
\\
&\lc\Big(\sum_{\substack{ I\in \sD_N\\ I\cap Q_* \not=\emptyset}}
|\cU_{N,k} \cap I |  \; \big| 2^{Nd} \sum_{j\ge N+1} 
 2^{-jd}\sum_{I'\in\sD_N(I)} \sum_{J\in\sD_{j+1}[\partial I']}\|F_j\|_{L^\infty(J)}\big|^p \Big)^{1/p}
 \\
 &\lc 2^{Nd} 2^{- (N(d-1)+k)/p}  
 \Big(\sum_{\substack{ I\in \sD_N\\ I\cap Q_* \not=\emptyset}}
 \big| \sum_{j\ge N+1} 
 2^{-jd} \sum_{J\in\sD_{j+1}[\partial I]}\|\La_j f\|_{L^\infty(J)}\big|^p \Big)^{1/p}.
\end{align*}
The $I$-sum in the last display contains $\approx |Q|2^{Nd}$ terms. Let $p\le  q\le 1$. Using  H\"older's inequality in this sum we see that 
\begin{multline} \Big\| \sum_{j>N} L_k\bbE_N \big[  L_jF_j \big]\Big\|_p \lc 
 (2^{Nd}|Q|)^{\frac 1p-\frac1q} 
 \times\\ 2^{Nd} 2^{- (N(d-1)+k)/p} 
\Big( \sum_{ I\in \sD_N}\Big|
  \sum_{j>N} 
 2^{-jd}\sum_{J\in\sD_{j+1}[\partial I]}\|\La_j f\|_{L^\infty(J)}\Big|^q\Big)^{1/q}.
 \label{auxiliaryline}
\end{multline}
Consider the maximal function
$\fM_j g(x)= \sup_{|h|_\infty \le 2^{-j+1} }|g(x+h) |$.
Then $\|\fM_j\La_j f\|_p\lc c_p \|\La_j f\|_p $ for all $p>0$.
Moreover, as in \cite[(22)]{gsu}, it holds
\[
\sup_{x\in J} \fM_j g(x)\;\lesssim\;\Big[\;\mint_{J^*}|\cM_j g(x+h)|^p\,dh\Big]^\frac1p,
\]
where $J^*$ is a $C'$-dilate of the cube $J\in\sD_{j+1}$. Therefore,
\begin{align*}
&\Big( \sum_{ I\in \sD_N}
\Big|
  \sum_{j>N} 
 2^{-jd}\sum_{J\in\sD_{j+1}[\partial I]}\|\La_j f\|_{L^\infty(J)}\Big|^q\Big)^{1/q}
\\
&\lc\Big( \sum_{ I\in \sD_N}\Big|
  \sum_{j>N} 
 2^{-jd}\sum_{J\in\sD_{j+1}[\partial I]}2^{jd/p} \|\cM_j [\La_j f]\|_{L^p(J^*)}\Big|^q\Big)^{1/q}.
 \end{align*} Using the embeddings $\ell^p\hookrightarrow\ell^1$ (for the $J$-sum) and 
$\ell^q\hookrightarrow\ell^1$ (for the $j$-sum), and in the second step $\ell^{p/q}\hookrightarrow\ell^1$ (for the $I$-sum), the above quantity is further estimated by 
 \begin{align*}
 &\Big( \sum_{ I\in \sD_N}
  \sum_{j>N} 
 2^{j d(\frac 1p-1)q} \Big( \sum_{J\in\sD_{j+1}[\partial I]}\|\cM_j [\La_j f]\|_{L^p(J^*)}^p \Big)^{q/p} \Big)^{1/q}
 \\
 &\lc\Big( 
  \sum_{j>N} 
 2^{j d(\frac 1p-1)q} \Big( \sum_{ I\in \sD_N}\sum_{J\in\sD_{j+1}[\partial I]}\|\cM_j [\La_j f]\|_{L^p(J^*)}^p \Big)^{q/p} \Big)^{1/q}
\\
 &\lc\Big( 
  \sum_{j>N} 
 2^{j d(\frac 1p-1)q}\|\cM_j [\La_j f]\|_{p}^q \Big)^{1/q}\lc \Big( 
  \sum_{j>N} 
 2^{j d(\frac 1p-1)q}\|\La_j f\|_{p}^q \Big)^{1/q}.
\end{align*}
Inserting this estimate into \eqref{auxiliaryline} we see that
\begin{align*} 
      &\Big(\sum_{k>N} 2^{kd(\frac 1p-1)r }
\Big\| \sum_{j>N} L_k \bbE_N L_j[\zeta_Q \La_j f]
\Big\|_p 
^r \Big)^{1/r} \\
&\lc (2^{Nd}|Q|)^{\frac 1p-\frac 1q} 
\Big( \sum_{k>N}  2^{(N-k) (d-\frac {d-1}p)r}\Big)^{1/r} 
\Big( 
  \sum_{j>N} 
 2^{j d(\frac 1p-1)q}\|\La_j f\|_{p}^q \Big)^{1/q},
\end{align*}
and since the $k$-sum is $O(1)$ in the  larger range $p> \frac{d-1}{d}$   we obtain
  \eqref{kj>N-Besovqloc} for $\frac{d}{d+1}\le p\le q\le 1$.
\end{proof}  
 
 \begin{proof}[Proof of \eqref{kleNj>N-Besovqloc}]

This case is simpler and can be obtained from the individual bounds of $\|L_k \bbE_N L_j[F_j]\|_p$
in Proposition \ref{localizedproposition}. Recall that $F_j=\zeta_Q\La_j f$ and $L_k \bbE_N L_j[F_j]$ are supported in a $C$-dilate of $Q$.

Let $k\le N$. Applying H\"older's inequality, the $q$-triangle inequality and  Proposition \ref{localizedproposition}.i  we now obtain
\begin{multline*}
2^{ks}\Big\| \sum_{j>N} L_k \bbE_N L_j[F_j]
\Big\|_p
\lc 2^{ks}\,|Q|^{\frac 1p-\frac 1q} 
\Big[\sum_{j>N} \big\|L_k \bbE_N L_j[F_j]
\big\|_q^q\Big]^\frac1q
\\ \lc |Q|^{\frac1p-\frac1q}
2^{k(s+d+1-\frac dq) } 2^{-N}
\Big(\sum_{j>N} 
2^{j(\frac dq-d)q} 
\|\La_j f
\|_q^q \Big)^{1/q}.
\end{multline*}
We now use the extension of Young's inequality
\Be
\|\La_j f\|_q\le 2^{j (\frac dp-\frac dq)} \|\La_jf\|_p;
\label{Young}\Ee
see e.g. \cite[2.7.1/3]{Tr83}.
As a result we obtain
\begin{multline*}2^{kd(\frac 1p-1)}\Big\| \sum_{j>N} L_k \bbE_N L_j[\zeta_Q \La_j f]
\Big\|_p\\ \lc 
2^{(k-N) (\frac dp-\frac dq+1) }
(2^{Nd}|Q|)^{\frac1p-\frac1q} \Big(\sum_{j>N} 2^{j(\frac dp-d)q} 
\|\La_j f
\|_p^q \Big)^{1/q}.\end{multline*}
Finally, we may sum over $k\le N$ using that $p\leq q$, 
and therefore obtain \eqref{kleNj>N-Besovqloc}. With this assertion, the  proof of Theorem \ref{locallowerboundsthm} is now complete.
\end{proof}

\section{A strongly admissible enumeration}\label{admissiblesection}
We give  explicit examples of strongly admissible  enumerations for $\sH_d $.
We define the family of cubes 
\[
\fQ_5=\Big\{\textstyle\prod_{i=1}^d [10\ka_i-5, 10\ka_i+5)\mid \ka\in \bbZ^d\Big\}.
\]
For $\ell=0,1,2,\dots$, let $\fQ_5(\ell)$ be a strictly  increasing collection  of finite families of  cubes from $\fQ_5$ such that 
for each cube in $\fQ_5(\ell)$ all its neighboring cubes
in  $\fQ_5$  belong to $\fQ_5({\ell+1})$, and such that $\fQ_5=\cup_{\ell} \fQ_5(\ell)$.

{\it Example. }
We may take $\fQ_5(\ell)$ to be family of all  $Q\in\fQ_5$ such that 
$Q\subset [-10 \ell-5, 10\ell+5)^d$. 

Let $\cA_0=[-5,5)^d$, and for $\ell\ge 1$ let $\cA_\ell$ be the union of cubes in $\fQ_5$ which belong to $\fQ_{5}(\ell) \setminus \fQ_5(\ell-1)$. 
For $\ell\ge 0$,  let $\sH_d (\ell,0)$ be the family of characteristic functions of 
 dyadic unit cubes contained in $\cA_\ell$.
 For $k\ge 1$, $\ell\ge 0$,  let $\sH(\ell,k)$ be the family  of Haar functions of mean value $0$ and Haar frequency $2^{k-1}$ with the property that the interior of their  support  is contained in $\cA_\ell$.
Clearly, $\sH_d =\cup_{\ell,k\geq0}\sH(\ell,k)$.

Let $N(\ell,k)=\# \sH_d (\ell,k)$.
We then have $N(0,0)=10^d$ and 
$$N(\ell,k)= N(\ell,0) 2^{(k-1)d}(2^d-1).$$  In the specific example above 
the sets $\cA_\ell$, $\ell\ge 1$, are corridors of width $10$, of the form
$[-10\ell-5, 10\ell+5)^d\setminus
[-10\ell+5, 10\ell-5)^d$ and we have 
$N(\ell,0)= 10^d ((2\ell+1)^d-(2\ell-1)^d)$.

We now define an admissible enumeration $\cU$ associated with this collection. 
Let $P(m)=\sum_{i=0}^{m} N(m-i,i)$,  for $m=0,1,2,\dots$, and let 
\Be
R(m)=\sum_{j=0}^mP(j),
\label{Rm}
\Ee
so that $R(m+1)-R(m)=P(m+1)$.
First, for $n=1,\dots, R(0)$ we enumerate the functions in  $\sH(0,0)$.
Next, for $n=R(m)+1,\dots, R(m+1)$ we enumerate the functions in $\cup_{i=0}^{m+1} \sH(m+1-i,i)$ as follows:
when $$R(m)+1\le n\le R(m)+ N(m+1,0)$$ we enumerate the functions in $\sH_d (m+1,0)$; subsequently, for 
each  $\nu=1,\dots, m+1$, when 
\[ R(m)+ \sum_{i=0}^{\nu-1} N(m+1-i,i)+1\le n\le 
R(m)+ \sum_{i=0}^{\nu} N(m+1-i,i)\]
we enumerate the functions in 
$\sH_d (m+1-\nu,\nu)$.

That is, the functions in $\sH(\ell',k')$ occur earlier than those in $\sH(\ell,k)$ if $\ell'+k'<\ell+k$.
Moreover, $\sH(\ell+1,k-1)$ also occurs earlier than $\sH(\ell,k)$.
Now, if $u_n$ and $u_{n'}$ are both supported in $I^{**}$, the five-fold dilate of a fixed unit cube $I$,
then their supports must be contained in cubes from $\cA(\ell)\cup\cA(\ell+1)$, for some smallest $\ell\geq0$.
Moreover, if $|\supp u_{n'}|\geq 2^d|\supp u_{n}|$, that is, $k(n')\leq k(n)-1$, then the above observations imply that $u_{n'}$ must occur before $u_{n}$.
Thus, the enumeration we just constructed for $\sH_d $  is strongly admissible  with $b=1$. 

\

In the next section it will be convenient to notice that, for the enumeration above, we have
\Be S_{R(m)} f= \bbE_{m-\ell}f, \quad\text{ if } \supp (f) \subset
\cA_\ell \; \text{ and $\ell\le m$}.
\label{SRm}
\Ee
In particular we have $S_{R(m)} f=\bbE_m f$ if $f$ is supported in $(-5,5)^d$.

\section{Failure of convergence for strongly admissible enumerations}
\label{failurestronglyadmsect}

In this section we prove the remaining negative results for the Schauder basis property, 
as stated in Theorem \ref{schauder-besov}; namely the cases
\Benu
\item[(a)] $\;s=\frac dp-d$, $\frac d{d+1}\leq p\leq 1$ and $0<q<p$
\item[(b)] $\;s=\frac 1p-1$, $1<p<\infty$ and $0<q\leq 1$.
\Eenu
We remark that in these cases the operators $\SE_N$ are uniformly bounded, by Theorem \ref{basic-sequence-besov}
(iii) and (vi), and local positive results hold by Theorem \ref{schauderB-bourdaud:d/p-d}.
We disprove the possibility that the admissible enumerations in \S\ref{admissiblesection}
may be \emph{global} Schauder bases in $B^s_{p,q}(\SR^d)$. It suffices to show that the corresponding
partial sum operators $S_R$ are not uniformly bounded.
 
\subsection{\it The case $0<p\leq 1$}

\begin{proposition}\label{admissiblecounterex}
Let $0<q<p\leq 1$. Then, for the strongly admissible enumerations defined in \S\ref{admissiblesection} 
 we have
\Be \label{opnorminadmissible}
\sup_{R\in \bbN} \sup\Big\{ \|S_R f\|_{B^{d(\frac 1p-1)}_{p,q}} :\,
\|f\|_{B^{d(\frac 1p-1)}_{p,q}} \le 1\Big\} =\infty.
\Ee
\end{proposition}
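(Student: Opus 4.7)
The plan is to exploit the explicit strongly admissible enumeration $\cU$ of \S\ref{admissiblesection} together with its key identity \eqref{SRm}. For each integer $L\geq 1$, setting $j:=L$, I would pick points $y_\ell:=(10\ell,0,\ldots,0)\in\cA_\ell$ for $\ell=0,1,\ldots,L$, which satisfy $|y_\ell-y_{\ell'}|_\infty\geq 10$ whenever $\ell\neq\ell'$, and consider the test function
\[
f_{L}(x) \;:=\; \sum_{\ell=0}^L g_j(x-y_\ell),
\]
where $g_j$ is the tensor-product function from \eqref{gldef}. Since $\supp g_j(\cdot-y_\ell)\subset\cA_\ell$ for $j$ large and $\ell\leq j$, property \eqref{SRm} combined with the identity $\bbE_N g_l=h_N$ for $l\geq N$ (see \eqref{hNdef}) yields
\[
S^\cU_{R(j)} f_L \;=\; \sum_{\ell=0}^L h_{j-\ell}(\cdot-y_\ell).
\]

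For the upper bound on $\|f_L\|_{B^s_{p,q}}$ with $s=d/p-d$, the $p$-triangle inequality (valid since $p\leq 1$) combined with translation invariance of $L^p$ gives $\|L_k f_L\|_p^p\leq (L+1)\|L_k g_j\|_p^p$ for every $k\geq 0$. By the local means characterization \eqref{Bspq_localmeans} and the uniform bound $\|g_l\|_{B^s_{p,q}}\lesssim 1$ (obtained by inserting \eqref{Ljgjdt} into the definition), this yields $\|f_L\|_{B^s_{p,q}}\lesssim L^{1/p}$. For the matching lower bound on $\|S^\cU_{R(j)}f_L\|_{B^s_{p,q}}$, the frequency-localized pieces $L_k[h_{j-\ell}(\cdot-y_\ell)]$ are supported in boxes of sidelength $\leq 2^{-k}+2\cdot 2^{-(j-\ell)}\leq 3$ centered at $y_\ell$, hence have pairwise disjoint supports. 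Therefore
\[
\|L_k S^\cU_{R(j)}f_L\|_p^p \;=\; \sum_{\ell=0}^L \|L_k h_{j-\ell}\|_p^p \;\geq\; \|L_k h_k\|_p^p \quad\text{for } k\in\{j-L,\ldots,j\},
\]
taking only the $\ell=j-k$ term. A direct scaling based on $h_N(x)=2^{Nd}h_0(2^N x)$ gives $\|L_k h_k\|_p=2^{kd(1-1/p)}\|\beta*h_0\|_p=c_\beta\,2^{-ks}$, where $c_\beta:=\|\beta*h_0\|_p>0$ (nonvanishing because $\widehat{h_0}$ is not identically zero on the annulus where $\widehat\beta$ lives). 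Inserting into \eqref{Bspq_localmeans},
\[
\|S^\cU_{R(j)}f_L\|_{B^s_{p,q}}^q \;\gtrsim\; \sum_{k=j-L}^j 2^{ksq}\cdot 2^{-ksq} \;\approx\; L,
\]
so $\|S^\cU_{R(j)}f_L\|_{B^s_{p,q}}\gtrsim L^{1/q}$.

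Combining, the operator norm of $S^\cU_{R(j)}$ on $B^{d/p-d}_{p,q}(\bbR^d)$ is at least a constant times $L^{1/q-1/p}$, which tends to infinity as $L\to\infty$ since $q<p$. This establishes \eqref{opnorminadmissible}. The main technical step will be justifying the disjoint-support identity $\|L_k S^\cU_{R(j)}f_L\|_p^p=\sum_\ell \|L_k h_{j-\ell}\|_p^p$ uniformly in $k\geq 0$, together with the positivity $\|\beta*h_0\|_p>0$; both rely on the compact support of the convolution kernel $\beta_k$ and on the specific tensor-product structure of $h_N$.
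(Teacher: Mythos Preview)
Your proposal is correct and follows essentially the same strategy as the paper's proof: place widely separated translates of $g_j$ in the successive regions $\cA_\ell$, apply \eqref{SRm} to identify $S^\cU_{R(j)}$ with $\bbE_{j-\ell}$ on each piece, and compare the resulting $\ell^p$ and $\ell^q$ sums. The only cosmetic differences are that the paper takes $j\gg m$ rather than $j=L$, chooses abstract points $\fz_\ell$ rather than your explicit $y_\ell=(10\ell,0,\dots,0)$, and uses the tensorized test functions $\Psi_N$ of \S\ref{testfctsubsection} (via \eqref{Psicd}) for the lower bound in place of your direct scaling identity $\|L_k h_k\|_p=2^{-ks}\|\beta*h_0\|_p$.
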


\begin{proof}
We shall use a similar notation as in \S\ref{lowbdslocaldp-1sect}.
Consider functions $g_l$ as defined in \eqref{gldef}. 
Fix $j \gg m$, and for $\ell\le m$ pick $\fz_\ell \in \bbZ^d$ so that the threefold dilate of the cube $\fz_\ell+[0,1)^d$ is contained in $\cA_\ell$. 
Define
\Be
f_{m,j} (x)= \sum_{\ell=1}^m g_j(x-\fz_\ell).
\label{fmj}
\Ee
Note that the summands $g_j(\cdot-\fz_\ell)$ have disjoint supports in $\cA_\ell$.
By \eqref{SRm}
\[
S_{R(m) } f_{m,j} =  \sum_{\ell=1}^m \bbE_{m-\ell} [g_j(\cdot-\fz_\ell)]=\sum_{\ell=1}^m h_{m-\ell}(\cdot-\fz_\ell)\,,
\]
where $h_N$ was defined in \eqref{hNdef}.

Let $\Psi_N$ be defined as in \eqref{PsiNdef}, so that by \eqref{Psicd} we have \[
\|\Psi_{N+1} * h_N\|_p\gc 2^{-Nd(\frac 1p-1)}.\]
Then
\Beas
 \|S_{R(m)} f_{m,j}\|_{B^{d(\frac 1p-1)}_{p,q}}
 &\ge &
  \Big(\sum_{N=1}^\infty 2^{N(\frac dp-d)q } \|\Psi_{N+1}* S_{R(m)} f_{m,j}\|_p^q\Big)^{1/q} 
	\Eeas
	\Beas
\quad   &=&
  \Big(\sum_{N=1}^\infty 2^{N(\frac dp-d)q }
  \Big(\sum_{\ell=1}^m \big\|\Psi_{N+1}*
  h_{m-\ell}(\cdot-\fz_\ell)\big\|_p^p\Big)^{q/p} \Big)^{1/q}\\
   & \geq &
  \Big(\sum_{N=1}^{m-1} 2^{N(\frac dp-d)q }
  \big\|\Psi_{N+1}* h_{N} \big\|_p^q\Big)^{1/q}\gc m^{1/q}.
  \Eeas
Similarly, using the inequality in \eqref{Ljgjdt}, that is
 \[
2^{kd(\frac 1p-1)}\|L_k g_j\|_p \lc 2^{-|j-k| \dt},
\]
for some $\dt>0$, we may conclude that
\Beas \|f_{m,j}\|_{B^{d(\frac 1p-1)}_{p,q}}&  \lc & \Big(\sum_{k=0}^\infty 2^{kd(\frac1p-1)q}
\|L_k(f_{m,j})\|_p^q\Big)^\frac1q \\
& = &  \Big(\sum_{k=0}^\infty 2^{kd(\frac1p-1)q}
\Big(\sum_{\ell=1}^m \|L_kg_j(\cdot-\fz_\ell)\|_p^p\Big)^\frac qp\Big)^\frac1q \\
& \lesssim &\Big(\sum_{k=0}^\infty 2^{-|j-k| \dt q}\Big)^\frac1q\,  m^{1/p}\,\lesssim\, m^{1/p}. \Eeas
   Hence, the left hand side in 
   \eqref{opnorminadmissible} is $\gc m^{1/q-1/p}$ which implies the assertion if $q<p$.
   \end{proof}
  
 \subsection{\it The case $1<p<\infty$} \label{S10.2}

We shall deduce this case from the previous one. First of all notice that Proposition \ref{admissiblecounterex} remains to be valid when $1<p<\infty$. Indeed, the condition on $p$ did not play any role in the proof.
In particular, if the dimension $d=1$ this implies \Be
\sup_{R\in \bbN}\; \big\|S_R f\|_{B^{\frac 1p-1}_{p,q}\to B^{\frac 1p-1}_{p,q} }  =\infty, \quad \mbox{when }\; 0<q\leq 1<p.
\label{SRp1}
\Ee
To establish the same result for $d\geq2$, we tensorize the previous example. 
Consider \[
F_{m,j}(x_1,x')=f_{m,j}(x_1)\,\chi(x'),
\]
where $f_{m,j}$ is the 1-dimensional function in \eqref{fmj},
and $\chi\in C^\infty_c((-2,2)^{d-1})$ with $\chi\equiv1$ in $[-1,1]^{d-1}$.
We claim that, for $s=1/p-1$ and $0<q\leq 1<p$, we have
\Be
\label{d1}
\|F_{m,j}\|_{B^s_{p,q}(\SR^d)} 
\lesssim m^{1/p}
\Ee 
and 
\Be
\label{d2}
\|S_{R(m,d)}(F_{m,j})\|_{B^s_{p,q}(\SR^d)} \gtrsim \|S_{R(m,1)}(f_{m,j})\|_{B^s_{p,q}(\SR)} \gtrsim m^{1/q}.
\Ee
Here $R(m,d)$ are the numbers in \eqref{Rm}, where we stress the dependence on the dimension. Notice that in either case they verify \eqref{SRm}.


To justify these inequalities, we construct a function $\Psi\in C^\infty_c(\SR^d)$ as in \eqref{Psitensor}, that is
\Be
\Psi(x)= 
\Delta^M[ \phi_0\otimes \varphi_0] (x)= \theta(x_1) \varphi_0(x') + \phi_0(x_1) \vartheta(x'),
\label{Psi3}
\Ee
for suitable $\phi_0$, $\varphi_0$, $\theta$, $\vartheta$ as in the paragraph preceding \eqref{Psitensor}. We let
\[
 \Psi_0(x)=\phi_0(x_1)\varphi_0(x'),\mand \Psi_k(x)=2^{kd}\Psi(2^kx), \;\;k\geq1.
\]
These functions meet the required hypothesis to have \[
\big\|g\big\|_{B^s_{p,q}(\SR^d)}\,\approx\,\Big(\sum_{k=0}^\infty2^{ksq}\big\|\Psi_k*g\big\|_{L^p(\SR^d)}^q\Big)^{\frac1q}\, .
\]
Moreover, if we define, for $k\geq1$,
\[
\phi_k(x_1)=2^k\theta(2^kx_1)\mand \varphi_k(x')=2^{(d-1)k}\vartheta(2^kx'),
\]
then the convolutions with $\phi_k$ (respectively $\varphi_k$), $k=0,1,2,\ldots$,
can be used to characterize the norms of $B^s_{p,q}$ in $\SR$ (respectively in $\SR^{d-1}$). Using this notation in \eqref{Psi3} we can now write
\Be
\Psi_k=\phi_k\otimes \varphi_{0,k}\,+\,\phi_{0,k}\otimes\varphi_k,
\label{PSIk}
\Ee
with $\phi_{0,k}(x_1)=2^k\phi_0(2^kx_1)$ and likewise for $\varphi_{0,k}$.

We now prove \eqref{d2}. First, using \eqref{SRm} one easily sees that
\[
S_{R(m,d)}(F_{m,j})\,=\,(S_{R(m,1)} f_{m,j})\otimes(\SE^{(d-1)}_m[\chi]).
\]
Moreover, we claim that
\Be
\Psi_k*(S_{R(m,d)}F_{m,j})(x_1, x')\,=\,\phi_k*(S_{R(m,1)} f_{m,j})(x_1),\quad x'\in(\tfrac14,\tfrac34)^{d-1}.
\label{claimPsiSR}
\Ee
Indeed, this is a direct consequence of \eqref{PSIk} and 
\[
\varphi_{0,k}*(\SE^{(d-1)}_m\chi)(x')=\int\varphi_{0,k}=1 \mand \varphi_{k}*(\SE^{(d-1)}_m\chi)(x')=\int\varphi_{k}=0.
\]
Then \eqref{claimPsiSR} implies the first inequality in \eqref{d2}, and from the 1-dimensional result one obtains the second inequality.

We now prove \eqref{d1}. If $k\geq1$ we can write
\Bea
\Psi_k*F_{m,j} & = & (\phi_k*f_{m,j})\otimes(\varphi_{0,k}* \chi)+(\phi_{0,k}*f_{m,j})\otimes(\varphi_k*\chi)\nonumber\\
& = & A_k+B_k\label{AkBk}
\Eea
(a similar formula holds for $k=0$). Then
\Be
\|A_k\|_p\lesssim \|\phi_k*f_{m,j}\|_p\mand \|B_k\|_p\leq \|\phi_{0,k}*f_{m,j}\|_p\,\|\varphi_k*\chi\|_p.
\label{ABk}
\Ee
From the previous calculation in one dimension we have
\[\|\phi_k*f_{m,j}\|_p\lc 2^{-|k-j|\delta} 2^{k(1-\frac 1p)} m^{1/p}.
\]

We estimate the term
\[
\|\phi_{0,k}*f_{m,j}\|_p=\Big(\sum_{\ell=1}^m\|\phi_{0,k}*g_j\|_p^p\Big)^\frac1p=m^\frac1p\, \|\phi_{0,k}*g_j\|_p.
\]
Now, if $k\geq j$ then
\[
\|\phi_{0,k}*g_j\|_p\leq\|\phi_{0,k}\|_1\,\|g_j\|_p\lesssim 2^{(1-\frac1p)j}\leq 2^{k(1-\frac1p)}.
\]
On the other hand, if $k<j$ then 
\[
\|\phi_{0,k}*g_j\|_p\leq \|\phi_{0,k}\|_p\,\|g_j\|_1\lesssim 2^{k(1-\frac1p)}.
\]
Thus, \[
\|B_k\|_p\lesssim \, m^{1/p}\, 2^{k(1-\frac1p)}\,\|\varphi_k*\chi\|_p,
\]
which can be inserted into \eqref{ABk}, and overall will imply
\[
\|F_{m,j}\|_{B^{\frac1p-1}_{p,q}}\;\lesssim \;\|f_{m,j}\|_{B^{\frac1p-1}_{p,q}}\;+\;m^{\frac1p}\,\|\chi\|_{B^{0}_{p,q}}\;\lesssim\;m^{\frac 1p}.
\]
This completes the proof of \eqref{d1}, and hence of \eqref{SRp1} for all $d>1$.

\section{Failure of unconditionality when $s=d/p-d$} 
\label{uncondfailurep<1}

Theorem \ref{schauder-besov} states that strongly admissible  enumerations of $\sH_d $ form a Schauder basis of
$B^{d/p-d}_{p,p}$ when $\frac{d}{d+1}< p\le 1$. We show that the stronger 
conclusion of unconditionality fails.
The argument will also apply to the Triebel-Lizorkin spaces $F^{d/p-d}_{p,q}$ and therefore we cover this case at the same time.

\begin{theorem}\label{failureuncond-d/p-d}
For every $N\geq1$, there is a collection $A(N)$ of Haar functions, all  supported in $[0,1]^d$, with $\# (A(N))\le  2^d N$, and such that the orthogonal projection  operators $P_{A(N)}$ satisfy the estimates
\begin{align*}
&\|P_{A(N)} \|_{B^{d(\frac 1p-1)}_{p,q}\to B^{d(\frac 1p-1)}_{p,q}} \gc N^{1/q},
\\
&\|P_{A(N)} \|_{F^{d(\frac 1p-1)}_{p,q}\to F^{d(\frac 1p-1)}_{p,q}} \gc N^{1/p}.
\end{align*}
\end{theorem}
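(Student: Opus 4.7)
For the proof, I would exhibit an explicit family $A(N)$ together with a test function. Fix $\ep_*\in\Upsilon$, say $\ep_*=(1,0,\ldots,0)$, and set
\[
A(N)=\{h^{\ep_*}_{k,0}:\; k=0,1,\ldots,N-1\},
\]
consisting of $N\leq 2^dN$ Haar functions, all supported in the nested dyadic cubes $[0,2^{-k})^d\subset [0,1]^d$. For the test function, take $f=\bbone_{Q_N}$ with $Q_N=[0,2^{-N-1})^d$. Since $Q_N$ lies in the set where $h^{\ep_*}_{k,0}\equiv 1$ for every $k=0,\ldots,N-1$, one has $\langle f,h^{\ep_*}_{k,0}\rangle=|Q_N|=2^{-(N+1)d}$, and hence
\[
P_{A(N)}f=2^{-(N+1)d}\,F,\qquad F:=\sum_{k=0}^{N-1}2^{kd}h^{\ep_*}_{k,0}.
\]
The hypothesis $p>\tfrac{d}{d+1}$ yields $d/p-d<1/p$, so $\bbone_{[0,1]^d}$ lies in both $B^{d/p-d}_{p,q}$ and $F^{d/p-d}_{p,q}$ (see the discussion in \S\ref{necconds=1/p}) and, by dilation, $\|f\|_X\lesssim 2^{-(N+1)d}$ in either space. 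The theorem thus reduces to the two lower bounds
\[
\|F\|_{B^{d/p-d}_{p,q}}\gtrsim N^{1/q},\qquad \|F\|_{F^{d/p-d}_{p,q}}\gtrsim N^{1/p}.
\]

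Both estimates come from Littlewood-Paley analysis of $L_nF$, $n=0,\ldots,N-1$. The essential structural feature is that $F$, being an explicit linear combination of nested Haar atoms, has a tall narrow spike of size $\approx 2^{Nd}$ concentrated on $[0,2^{-N})^d$ (in higher dimensions, a layered structure with $|F|\approx 2^{kd}$ on the $\ell^\infty$-shells $E_k:=\{x:\max_i|x_i|\in[2^{-k-1},2^{-k})\}$). By a standard near-diagonal estimate for Haar-atom/local-means interactions, analogous to Lemma \ref{LjLk}, the convolution $L_n F$ is dominated at scale $n\ll N$ by the spike's Case-2 response, so $L_n F\approx 2^{nd}\beta(2^n\,\cdot)$ pointwise up to lower-order terms, giving $\|L_n F\|_p\approx 2^{nd(1-1/p)}$. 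Multiplying by $2^{n(d/p-d)}$ produces a quantity $\gtrsim 1$ at each of the $N$ scales; summing in $\ell^q$ via \eqref{Bspq_localmeans} yields $\|F\|_{B^{d/p-d}_{p,q}}^q\gtrsim N$.

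For the Triebel-Lizorkin bound, use the pointwise characterization $\|g\|_{F^s_{p,q}}\approx\|(\sum_n|2^{ns}L_ng|^q)^{1/q}\|_{L^p}$. The same spike/layer analysis shows that on each shell $E_k$ the pointwise value $|L_nF(x)|$ is of order $2^{nd}$ for a definite range of $n\leq k-O(1)$, so the pointwise $\ell^q$-square function exceeds $2^{kd/p}$ on a large subset of $E_k$. Since $|E_k|\approx 2^{-kd}$, each of the $N$ shells contributes $\approx 1$ to $\|F\|_{F^{d/p-d}_{p,q}}^p$, giving $\|F\|_{F^{d/p-d}_{p,q}}\gtrsim N^{1/p}$. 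The main technical hurdle, common to both bounds, is the quantitative single-scale localization of the Haar-atom convolutions $L_nh^{\ep_*}_{k,0}$, namely geometric decay in $|k-n|$ obtained from the vanishing moments of $L_n$ (for $k<n$) and from the vanishing mean of the Haar atom (for $k>n$); in the range $p>d/(d+1)$ these decays are summable and the diagonal contribution $k=n$ dominates, which is what justifies the spike-response estimate used above.
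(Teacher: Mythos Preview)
Your scaling argument for the upper bound is incorrect, and this breaks the whole proof. You claim that $\|\bbone_{Q_N}\|_{B^{d/p-d}_{p,q}}\lesssim 2^{-(N+1)d}$ ``by dilation'', but the exponent $s=d/p-d$ is exactly the one at which homogeneous scaling produces no decay across frequencies: for each $1\le k\le N$ one has $L_k\bbone_{Q_N}\approx 2^{(k-N-1)d}\beta(2^k\cdot)$, hence $2^{k(d/p-d)}\|L_k\bbone_{Q_N}\|_p\approx 2^{-(N+1)d}$ \emph{independently of $k$}. Summing in $\ell^q$ gives
\[
\|\bbone_{Q_N}\|_{B^{d/p-d}_{p,q}}\approx N^{1/q}\,2^{-(N+1)d},\qquad
\|\bbone_{Q_N}\|_{F^{d/p-d}_{p,q}}\approx N^{1/p}\,2^{-(N+1)d}.
\]
(Equivalently, $\bbone_{[0,1)^d}\notin \dot B^{d/p-d}_{p,q}$ for $q<\infty$, so the dilation identity cannot be invoked.) Since your lower bounds $\|F\|_{B}\gtrsim N^{1/q}$ and $\|F\|_{F}\gtrsim N^{1/p}$ are of the same order, the ratio $\|P_{A(N)}f\|/\|f\|$ stays bounded and no growth is detected.

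The missing idea is \emph{cancellation in the test function}. The paper takes $A(N)$ to be the span of $\bbone_{[0,1)^d}$ together with all $h^\ep_{k,0}$, $\ep\in\Upsilon$, $0\le k\le N-1$, and tests with the odd function $G_N(x)=F_N(x)-F_N(-x)$, where $F_N=2^{Nd}\bbone_{[0,2^{-N})^d}$. Because $\int G_N=0$, the low-frequency pieces gain a factor $2^{k-N}$, so $\|G_N\|\lesssim 1$ in both scales. The reflected half $F_N(-\cdot)$ is supported outside $[0,1)^d$ and is therefore annihilated by $P_{A(N)}$, while (via the identity $2^{Nd}\bbone_{I_{N,0}}=\bbone_{I_{0,0}}+\sum_{k<N}2^{kd}\sum_{\ep}h^\ep_{k,0}$) the positive half lies in $A(N)$; thus $P_{A(N)}G_N=F_N$. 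The projection has destroyed the cancellation, and now the same Littlewood--Paley count you sketched gives $\|F_N\|_{B}\gtrsim N^{1/q}$, $\|F_N\|_{F}\gtrsim N^{1/p}$. Your argument can be repaired along these lines by replacing $\bbone_{Q_N}$ with its odd extension $\bbone_{Q_N}(\cdot)-\bbone_{Q_N}(-\cdot)$; the point is that the growth comes precisely from the projection's destruction of mean zero, not from any intrinsic largeness of $F$.
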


We shall use the following well-known identity.

\begin{lemma}\label{haar_id} For  $N=1,2, \dots$, it holds
\Be
2^{Nd} \bbone_{I_{N,0}}\,=\,\bbone_{I_{0,0}}+ \sum_{k=0}^{N-1} 2^{kd}\sum_{\ep\in \Upsilon} h^{\ep}_{k,0}\,.
\label{haar2N}
\Ee
\end{lemma}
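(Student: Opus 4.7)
The plan is to derive the identity as a consequence of the martingale difference formula \eqref{martdiff} applied to $f=\bbone_{I_{N,0}}$, combined with two boundary observations: $\bbone_{I_{N,0}}$ is already $\bbE_N$-measurable (so $\bbE_N\bbone_{I_{N,0}} = \bbone_{I_{N,0}}$), while at level $0$ one has $\bbE_0\bbone_{I_{N,0}} = 2^{-Nd}\bbone_{I_{0,0}}$ because $I_{N,0}\subset I_{0,0}$ and $|I_{N,0}|=2^{-Nd}$. Telescoping the differences $\bbE_{k+1}\bbone_{I_{N,0}}-\bbE_k\bbone_{I_{N,0}}$ from $k=0$ to $k=N-1$ will therefore express $\bbone_{I_{N,0}} - 2^{-Nd}\bbone_{I_{0,0}}$ as a linear combination of Haar functions of frequency $2^k$ with $0\le k<N$.

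The one nontrivial step will be evaluating the inner products $\inn{\bbone_{I_{N,0}}}{h^\epsilon_{k,\mu}}$ that appear in \eqref{martdiff}. I would argue that for $0\le k<N$ the only surviving index is $\mu=0$, since $I_{N,0}\subset I_{k,0}$ and $h^\epsilon_{k,\mu}$ has support disjoint from $I_{k,0}$ when $\mu\neq 0$. For $\mu=0$ and any $x\in I_{N,0}$ one has $2^kx_i\in[0,2^{k-N})\subset[0,\tfrac12)$, so each factor $h^{(\epsilon_i)}(2^kx_i)$ equals $1$ regardless of whether $\epsilon_i=0$ or $\epsilon_i=1$; hence $h^\epsilon_{k,0}\equiv 1$ on $I_{N,0}$ and the inner product equals $2^{-Nd}$.

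Substituting back into \eqref{martdiff} yields
\[
\bbE_{k+1}\bbone_{I_{N,0}}-\bbE_k\bbone_{I_{N,0}} \;=\; 2^{kd-Nd}\sum_{\epsilon\in\Upsilon} h^\epsilon_{k,0},
\]
and summing over $k=0,\ldots,N-1$ telescopes to
\[
\bbone_{I_{N,0}} - 2^{-Nd}\bbone_{I_{0,0}} \;=\; 2^{-Nd}\sum_{k=0}^{N-1}2^{kd}\sum_{\epsilon\in\Upsilon} h^\epsilon_{k,0}.
\]
Multiplying by $2^{Nd}$ gives \eqref{haar2N}. I do not expect a real obstacle: the argument is a direct martingale telescoping, and the only subtlety is recognizing that in dimension $d\ge 2$ the tensor factors with $\epsilon_i=0$ contribute $h^{(0)}(2^kx_i)=1$ on $I_{N,0}$ just as those with $\epsilon_i=1$ do, which is why all $2^d-1$ indices $\epsilon\in\Upsilon$ enter symmetrically.
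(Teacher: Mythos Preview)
Your proof is correct and is essentially the same as the paper's, which simply says the formula ``follows easily computing the Haar coefficients of the function on the left hand side.'' Your martingale telescoping via \eqref{martdiff} is just an organized way of carrying out that Haar-coefficient computation, and the key inner-product evaluation $\inn{\bbone_{I_{N,0}}}{h^\epsilon_{k,0}}=2^{-Nd}$ is exactly the step the paper leaves implicit.
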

\begin{proof} 
The formula follows easily computing the Haar coefficients of the function on the left hand side of \eqref{haar2N}.
\end{proof}

Let $F_N(x)=2^{Nd}\bbone_{[0,2^{-N})^d}(x)$, and let $G_N$ be its odd extension
$G_N(x)= F_N(x)- F_N(-x)$. 
Consider the finite dimensional subspace \Be
A(N)= \text{span} \Big( \{\bbone_{[0,1)^d}\}\cup \bigcup_{k=0}^{N-1} \{ h^{\ep}_{k,0}: \ep\in \Upsilon\}\Big),
\label{ANspan}
\Ee
which has dimension $\dim A(N)=(2^d-1)N+1$. Let $P_{A(N)}$ be the orthogonal projection onto $A(N)$. Then, by Lemma \ref{haar_id}, $F_N\in A(N)$ and
$$P_{A(N)}\big( G_N\big)= F_N.$$
The failure of unconditionality follows now from
\begin{proposition}\label{P11} Let $\frac {d-1}{d}<p<\infty$, $q>0$. Then, for large $N$, 
\begin{subequations}
\begin{align}
\|G_N\|_{B^{\frac dp-d}_{p,q}} &\lc 1\,,
\label{GNupB}
\\
\|G_N\|_{F^{\frac dp-d}_{p,q}}  &\lc 1\,,
\label{GNupF}
\end{align}
\end{subequations}
and
\begin{subequations}
\begin{align}
&\|P_{A(N)} G_N\|_{B^{\frac dp-d}_{p,q}} \gc N^{1/q}\,,
\label{GNlowB}
\\
&\|P_{A(N)} G_N\|_{F^{\frac dp-d}_{p,q}} \gc N^{1/p}\,.
\label{GNlowF}
\end{align}
\end{subequations}
\end{proposition}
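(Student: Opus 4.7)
The plan is to analyze $F_N$ and $G_N$ via the local means characterization \eqref{Bspq_localmeans}, exploiting the cancellations created by the subtraction $G_N=F_N-F_N(-\cdot)$. Since every element of $A(N)$ in \eqref{ANspan} is supported in $[0,1)^d$ while $\supp F_N(-\cdot)\subset(-2^{-N},0]^d$, one has $F_N(-\cdot)\perp A(N)$ in $L^2$; combined with Lemma \ref{haar_id} this gives $P_{A(N)}G_N=P_{A(N)}F_N=F_N$. Hence the lower bounds \eqref{GNlowB} and \eqref{GNlowF} reduce to estimating $\|F_N\|$ from below.

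For the upper bounds \eqref{GNupB} and \eqref{GNupF} I study $L_kG_N=\beta_k*G_N$ case by case in $k$. By oddness, $\int y^\alpha G_N\,dy=0$ for every multi-index $\alpha$ of even total degree; in particular $\int G_N=0$, while the first moments are $\int y_iG_N\,dy=2\int y_iF_N\,dy=2^{-N}$. For $k\le N$ a Taylor expansion of $\beta_k(x-y)$ at $y=0$ eliminates the zeroth-order term (mean zero) and yields $|L_kG_N(x)|\lesssim 2^{-N}|\nabla\beta_k(x)|\lesssim 2^{-N}\cdot 2^{k(d+1)}\bbone_{|x|\lesssim 2^{-k}}$, whence $2^{k(d/p-d)}\|L_kG_N\|_p\lesssim 2^{k-N}$. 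For $k>N$ the vanishing moments of $\beta_k$ confine $L_kG_N(x)$ to a $2^{-k}$-tube around $\partial(\supp G_N)$, a set of measure $\lesssim 2^{-N(d-1)-k}$; since $|L_kG_N|\lesssim 2^{Nd}$ there, one gets $2^{k(d/p-d)}\|L_kG_N\|_p\lesssim 2^{-(k-N)(d-(d-1)/p)}$, which decays geometrically exactly because $p>(d-1)/d$. Summing in $\ell^q$ yields \eqref{GNupB}. The Triebel--Lizorkin bound \eqref{GNupF} then follows from the elementary embedding $B^s_{p,\min(p,q)}\hookrightarrow F^s_{p,q}$ (a consequence of Minkowski's inequality together with $B^s_{p,p}=F^s_{p,p}$) applied to the Besov bound with secondary index $r=\min(p,q)$.

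For the lower bounds I use the test functions $\Psi_n=2^{nd}\Psi(2^n\cdot)$ from \S\ref{testfctsubsection} with $M$ chosen large, which deliver
\[
\|f\|_{B^{d/p-d}_{p,q}}\gtrsim \bigl\|\{2^{n(d/p-d)}\Psi_n*f\}_n\bigr\|_{\ell^q(L^p)}, \quad
\|f\|_{F^{d/p-d}_{p,q}}\gtrsim \bigl\|\{2^{n(d/p-d)}\Psi_n*f\}_n\bigr\|_{L^p(\ell^q)}.
\]
Taylor expansion gives $\Psi_n*F_N(x)=\Psi_n(x)+O(2^{n-N}\|\Psi_n\|_\infty)$, so for $n\le N-C$ (with absolute $C$) the error is negligible on the set where $|\Psi_n(x)|\sim\|\Psi_n\|_\infty$; consequently $\|\Psi_n*F_N\|_p\sim\|\Psi_n\|_p=2^{nd(1-1/p)}\|\Psi\|_p$, and hence $2^{n(d/p-d)}\|\Psi_n*F_N\|_p\sim 1$ for each such $n$. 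Summing over $1\le n\le N-C$ in $\ell^q$ proves \eqref{GNlowB}. For \eqref{GNlowF} I choose $\Psi$ so that $|\Psi(y)|\ge c>0$ on some ball $|y|\le\delta$; then $|2^{n(d/p-d)}\Psi_n*F_N(x)|\gtrsim 2^{nd/p}$ on $\{|x|\le\delta\,2^{-n}\}$, and for each $x$ with $c_1 2^{-N}\le |x|\le\delta$ the contributing indices are $\{n:n\le\log_2(\delta/|x|)\}$ (effectively capped at $N-C$), giving the pointwise lower bound $(\sum_n|\cdot|^q)^{1/q}\gtrsim |x|^{-d/p}$. Integrating,
\[
\|F_N\|_{F^{d/p-d}_{p,q}}^p \gtrsim \int_{c_1 2^{-N}\le |x|\le \delta} |x|^{-d}\,dx \sim N,
\]
which gives $\|F_N\|_{F^{d/p-d}_{p,q}}\gtrsim N^{1/p}$.

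The main obstacle is the $k>N$ case of the upper bound: the vanishing moments of $\beta_k$ must be combined with the measure estimate for the $2^{-k}$-tube around $\partial(\supp G_N)$ to extract the decay exponent $d-(d-1)/p$, which is positive precisely in the range $p>(d-1)/d$ of the proposition; the cancellation of the zeroth- and even-order moments of $G_N$ in the range $k\le N$ is, by comparison, a routine Taylor expansion.
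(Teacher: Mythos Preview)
Your argument is correct and runs parallel to the paper's. The Besov upper bound and both lower bounds are handled the same way in both proofs: mean-zero Taylor expansion for $k\le N$, the boundary-tube estimate for $k>N$ (where only the zeroth vanishing moment of $\beta_k$ is actually used, not higher ones), and the observation $\Psi_n*F_N\approx\Psi_n$ for $n\ll N$ giving $\sim N$ scales each contributing $\sim 1$.

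The one genuine difference is the Triebel--Lizorkin upper bound \eqref{GNupF}. The paper carries out a direct $L^p(\ell^q)$ computation, splitting the range $k>N$ into dyadic shells of the tube $\cN_{k,N}$ and summing; this requires separate treatment of the cases $p<1$, $p=1$, $p>1$. Your route via the embedding $B^{s}_{p,\min(p,q)}\hookrightarrow F^{s}_{p,q}$ is cleaner: since your Besov estimate actually yields geometric decay of $2^{k(d/p-d)}\|L_kG_N\|_p$ in both regimes $k\le N$ and $k>N$, the $B^{s}_{p,r}$ bound holds for every $r>0$, and the embedding (which is elementary, as you note) finishes the job without further case analysis. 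This is a legitimate and slightly more economical alternative.
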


\begin{proof}
Since $\|\psi_0*G_N\|_p\lc 1$ for any $\psi_0\in\cS$, we only need to estimate the terms involving $\psi_k*G_N$, with $k\geq1$, in the $B^s_{p,q}$ or $F^s_{p,q}$ quasi-norms.
Assume that $\psi_k(x)= 2^{kd} \psi(2^kx)$, where $\psi\in C^\infty_c((-1,1)^d)$ is such that
$\psi(x)\ge 1$ for $x\in (-1/2,-1/8)^d$, and $\psi$ has sufficient vanishing moments (to characterize the involved B and F norms). 
For $k\geq1$, we  analyze $\psi_k* G_N$. Note, that $G_N$ is supported on $[-2^{-N},2^{-N}]^d$. Since 
$\int G_N(x) dx=0$ we have \[
|\psi_k*G_N(x)|\lc 2^{kd} 2^{k-N}\bbone_{[-2^{-k+1},2^{-k+1}]^d},\quad\mbox{ for }k\le N;
\]
 see \eqref{moment_gl}. 
Hence
\Be
2^{k(\frac dp-d)} \|\psi_k* G_N\|_p \lc 2^{k-N} , \quad k\le N.
\label{psiGN1}
\Ee
For $k>N$  let $D_{N}$ be the boundary of $I_N\cup -I_N$. Then 
$\psi_k*G_N$  is supported in a $C 2^{-k}$ neighborhood $\cN_{k,N}$ of $D_N$ and $\psi_k*G_N=O(2^{Nd})$ on
$\cN_{k,N}$. The measure of $\cN_{k,N}$ is $O(2^{-N(d-1)-k})$ and therefore we obtain
$$2^{k(\frac dp-d)} \|\psi_k* G_N\|_p \lc 2^{-(k-N)(d-\frac{d-1}{p})}, \quad k\ge N.$$
Since $p>\frac{d-1}{d}$ we can sum the estimates and obtain \eqref{GNupB}.

Similarly 
\begin{align*}
&\Big\|\Big(\sum_{k=1}^N |\psi_k*G_N|^q 2^{k(\frac dp-d)q}\Big)^{1/q}\Big\|_p
\\
&\lc \Big\|\Big(\sum_{k=1}^N |2^{kd} 2^{k-N} \bbone_{|x|\lc 2^{-k}} |^q 2^{k(\frac dp-d)q}\Big)^{1/q}\Big\|_p\lc 1
\end{align*}
and
\begin{align*}
&\Big\|\Big(\sum_{k=N+1}^\infty |\psi_k*G_N|^q 2^{k(\frac dp-d)q}\Big)^{1/q}\Big\|_p
\\
&\lc 2^{Nd} \Big\|\Big(\sum_{k=N+1}^\infty 2^{k(\frac dp-d)q} \bbone_{\cN_{k,N}} \Big)^{1/q}\Big\|_p
\\
&\lc 2^{Nd} \Big(\sum_{N\le l<\infty} \meas(\cN_{l,N})\Big( \sum_{N\le k\le l} 2^{k (\frac{d}{p}-d)q}\Big)^{p/q}\Big)^{1/p}
\\
&\lc 2^{Nd} \Big(\sum_{N\le l<\infty} 2^{-l-N(d-1)}\Big( \sum_{N\le k\le l} 2^{k (\frac dp-d)q}\Big)^{p/q}\Big)^{1/p}\lc 1.
\end{align*}
Observe that the last inequality requires a slightly different argument in each of the  cases 
$\frac{d-1}{d}<p<1$, $p=1$ and $p>1$; we leave details to the reader.
This proves \eqref{GNupF}.

We now include the lower bound for $P_{A(N)}G_N=F_N\equiv 2^{Nd}\bbone_{I_{N,0}}$.
Let $$\Omega_k = \Big(-\frac{3/8}{2^k}, -\frac{1/8}{2^k}\Big)^d 
.$$
Then, for $4\le k\le N-4$,
$$\psi_k*F_N(x)= \int 2^{kd} \psi(2^k (x-y)) 2^{Nd}\bbone_{I_{N,0}}(y) dy\, \ge\,  2^{kd},\quad \text{ for } x\in \Om_k,$$
due to $2^k(x-[0,2^{-N}]^d)\subset (-1/2,-1/8)^d$ and the assumptions on $\psi$. Hence $$2^{k(\frac dp-d)} \|\psi_k*F_N\|_p\gc 1, \quad 4\le k\le N-4,$$
 which implies \eqref{GNlowB}. Also
 \begin{align*} \|F_N\|_{F^{\frac dp-d}_{p,q}} 
 \gc \Big(\sum_{k=4}^{N-4} \int_{\Om_k} 2^{k(\frac dp-d)p} 2^{k dp}\Big)^{1/p}\gc N^{1/p}
 \end{align*}
and \eqref{GNlowF} follows.
\end{proof}

\section{Failure of unconditionality when $s=1/p-1$, $1<p<\infty$.}\label{uncondfailurep>1}
In dimension $d=1$ the failure of unconditionality of $\sH$ in $B^{\frac 1p-1}_{p,q}(\SR)$ is already contained in Proposition \ref{P11}.
 As happened in \S\ref{S10.2}, the argument for $d\geq2$ requires a slight  variation of the above.

We consider the finite dimensional space
\[
\cA(N):= \Span\Big\{ h\otimes \bbone_{[0,1]^{d-1}}\mid h\in A^{(1)}(N)\Big\},
\]
where $A^{(1)}(N)$ is the subspace defined in \eqref{ANspan} (when $d=1$).
Note that $\dim \cA(N)=\dim A^{(1)}(N)\approx N$.
We now have

\begin{theorem}\label{failureuncond-1/p-1}
Let $1<p<\infty$. 
Then 
\[ \|P_{\cA(N)} \|_{B^{\frac 1p-1}_{p,q}\to B^{\frac 1p-1}_{p,q}} \gc N^{1/q}.
\]
In particular, $\cH^d$ is not unconditional in $B^{\frac 1p-1}_{p,q}(\SR^d)$ for any $q>0$.
\end{theorem}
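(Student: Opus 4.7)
The plan is to tensorize the one-dimensional counterexample from Proposition \ref{P11}, exactly along the lines of \S\ref{S10.2}. Fix $\chi\in C^\infty_c(\SR^{d-1})$ with $\chi\equiv1$ on $[0,1]^{d-1}$ and $\supp\chi\subset(-2,2)^{d-1}$, and apply $P_{\cA(N)}$ to the test function $G_N\otimes\chi$, where $G_N$ is the one-dimensional odd function of Section \ref{uncondfailurep<1}. Since $\{h\otimes\bbone_{[0,1]^{d-1}}: h\in A^{(1)}(N)\}$ is orthogonal in $L^2(\SR^d)$ and $\langle\chi,\bbone_{[0,1]^{d-1}}\rangle_{L^2(\SR^{d-1})}=1$ by the choice of $\chi$, bilinearity of the inner product makes the projection factorize on tensor factors:
\[
P_{\cA(N)}(G_N\otimes\chi)\,=\,\bigl(P_{A^{(1)}(N)}G_N\bigr)\otimes\bbone_{[0,1]^{d-1}}\,=\, F_N\otimes\bbone_{[0,1]^{d-1}},
\]
with the second identity taken from Section \ref{uncondfailurep<1}.

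Next I would use the tensorized test functions $\Psi_k=\phi_k\otimes\varphi_{0,k}+\phi_{0,k}\otimes\varphi_k$ from \eqref{PSIk} (with sufficient vanishing moments) to evaluate both sides in the $B^{1/p-1}_{p,q}(\SR^d)$ quasi-norm. Expanding
\[
\Psi_k*(G_N\otimes\chi)\,=\,(\phi_k*G_N)\otimes(\varphi_{0,k}*\chi)\,+\,(\phi_{0,k}*G_N)\otimes(\varphi_k*\chi),
\]
the upper bound $\|G_N\otimes\chi\|_{B^{1/p-1}_{p,q}(\SR^d)}\lesssim 1$ should follow because $\|\varphi_{0,k}*\chi\|_p\lesssim 1$ uniformly in $k$, so the first summand inherits the 1D bound \eqref{GNupB} applied to $\phi_k*G_N$; meanwhile $\|\varphi_k*\chi\|_p\lesssim 2^{-kM}$ by smoothness of $\chi$ together with the vanishing moments of $\vartheta$, and combined with the crude estimate $\|\phi_{0,k}*G_N\|_p\lesssim 2^{k(1-1/p)}$ (Young's inequality using $\|G_N\|_1\lesssim 1$) this makes the second summand $q$-summable. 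For the lower bound, fix any $x'\in(1/4,3/4)^{d-1}$: for $k\ge 4$ the supports of $\varphi_{0,k}$ and $\varphi_k$ are contained in a small ball near the origin, so $x'-\supp\varphi_{0,k}$ and $x'-\supp\varphi_k$ both lie inside $[0,1]^{d-1}$, whence
\[
(\varphi_{0,k}*\bbone_{[0,1]^{d-1}})(x')=\!\int\varphi_{0,k}=1,\quad (\varphi_k*\bbone_{[0,1]^{d-1}})(x')=\!\int\varphi_k=0,
\]
so that $\Psi_k*(F_N\otimes\bbone_{[0,1]^{d-1}})(x_1,x')=\phi_k*F_N(x_1)$ on $\SR\times(1/4,3/4)^{d-1}$. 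The 1D calculation in the proof of \eqref{GNlowB} yields $2^{k(1/p-1)}\|\phi_k*F_N\|_{L^p(\SR)}\gtrsim 1$ for $4\le k\le N-4$, and summing over $k$ gives $\|F_N\otimes\bbone_{[0,1]^{d-1}}\|_{B^{1/p-1}_{p,q}(\SR^d)}\gtrsim N^{1/q}$. Combining the three steps produces $\|P_{\cA(N)}\|_{B^{1/p-1}_{p,q}\to B^{1/p-1}_{p,q}}\gtrsim N^{1/q}$.

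The main technical point will be justifying that the tensorized family $\{\Psi_k\}$ characterizes the Besov quasi-norm in $d$ variables; this is precisely the device already used and verified in \S\ref{S10.2}, relying on the sufficient vanishing moments of $\Psi$. Once that is in place, both the upper-bound and lower-bound computations reduce to direct adaptations of the 1D arguments in Section \ref{uncondfailurep<1}, and no new analytic difficulties arise.
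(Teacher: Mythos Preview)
Your proposal is correct and follows essentially the same approach as the paper: the same tensorized test function $G_N\otimes\chi$, the same projection identity $P_{\cA(N)}(G_N\otimes\chi)=F_N\otimes\bbone_{[0,1]^{d-1}}$, and the same use of the tensorized kernels $\Psi_k$ from \S\ref{S10.2} to bound $\|G_N\otimes\chi\|_{B^{1/p-1}_{p,q}}\lesssim1$ (via the $A_k+B_k$ splitting) and $\|F_N\otimes\bbone_{[0,1]^{d-1}}\|_{B^{1/p-1}_{p,q}}\gtrsim N^{1/q}$ (via the reduction to the one-dimensional $\phi_k*F_N$ on $x'\in(1/4,3/4)^{d-1}$). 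The only cosmetic differences are the support of $\chi$ and your slightly more explicit treatment of the $B_k$ term.
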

\begin{proof}
We keep the notation $F_N$ and $G_N$ for the 1-dimensional functions in Proposition \ref{P11}. We fix $\chi\in C^\infty_c((-1,2)^{d-1})$ with $\chi\equiv1$ in $[0,1]^{d-1}$, and define
\[
g_N(x) = G_N(x_1)\chi(x') \mand f_N(x)=F_N(x_1)\bbone_{[0,1]^{d-1}}(x').
\]
Observe that $f_N\in\cA(N)$ and
\[
P_{\cA(N)}(g_N)=f_N,
\] 
by our choice of $\chi$.
So, it suffices to show that, for large $N$,
\begin{align}
\|g_N\|_{B^{\frac 1p-1}_{p,q}}&\lc 1\,
\mand
\|f_N\|_{B^{\frac 1p-1}_{p,q}}\gc N^{1/q}\,.
\label{fNlo}
\end{align}
The  first assertion is proved as in \S\ref{S10.2}; namely, one constructs
functions $\Psi_k$ as in \eqref{PSIk} and observes that
\Beas
\Psi_k*g_N& = & (\phi_k*G_N)\otimes(\varphi_{0,k}*\chi)+(\phi_{0,k}*G_N)\otimes(\varphi_k*\chi)\nonumber\\
& = & A_k+B_k.
\Eeas
A similar proof as the one following \eqref{ABk} gives 
\[
\|A_k\|_p\lesssim \|\phi_k*G_N\|_p\mand \|B_k\|_p\lesssim 
2^{k(1-\frac1p)}\,\|\varphi_k*\chi\|_p.
\]
From here and the 1-dimensional results in \eqref{GNupB} it follows that
\[
\|g_N\|_{B^{\frac 1p-1}_{p,q}(\SR^d)}\lesssim \|G_N\|_{B^{\frac 1p-1}_{p,q}(\SR)}
+\|\chi\|_{B^{0}_{p,q}(\SR^{d-1})}\lesssim1.
\]
Likewise, to prove the second assertion in \eqref{fNlo} one uses\[
\Psi_k*f_N(x_1,x')=\phi_k*F_N(x_1),\quad x'\in(\tfrac14,\tfrac34)^{d-1}.
\]
This identity, as before, follows  from \eqref{PSIk} and the facts
\[
\varphi_{0,k}*\bbone_{[0,1]^{d-1}}(x')=\int\varphi_{0,k}=1 \mand \varphi_{k}*\bbone_{[0,1]^{d-1}}(x')=\int\varphi_{k}=0,
\]
because the supports of $\varphi_{0,k}(x-\cdot)$ and $\varphi_k(x-\cdot)$ are contained in $[0,1]^{d-1}$ for such values of $x'$. Thus,
\[
\|f_N\|_{B^{\frac 1p-1}_{p,q}(\SR^d)}\gtrsim \|F_N\|_{B^{\frac 1p-1}_{p,q}(\SR)}
\gtrsim N^{1/q}.
\]
This establishes \eqref{fNlo} and completes the proof of Theorem \ref{failureuncond-1/p-1}.
\end{proof}

\newpage
\end{document}